\DeclareMathOperator{\mct}{mct}
\DeclareMathOperator{\mcg}{mcg}
\newcolumntype{P}[1]{>{\centering\arraybackslash}p{#1}}
\newcommand{\equic}[2][1 cm]{
  \foreach \i in {1,...,#2} {
    \coordinate (N\i) at (\i*360/#2:#1);
    \fill[black] (N\i) circle (.05 cm) node[anchor=west]{$\i\cdot 5$};
  }
  \draw (N#2) -- (N1);
  \foreach \i in {2,...,#2} {
    \pgfmathparse{\i-1}
    \edef\j{\pgfmathresult}
    \draw (N\i) -- (N\j);
  }
}
\definecolor{calpolypomonagreen}{rgb}{0.12, 0.3, 0.17}
\title{Integer Complexity Generalizations in Various Rings}
\author{Aarya Kumar, Siyu Peng, Vincent Tran}
\begin{document}

\parindent=0pt
\theoremstyle{definition}
\newtheorem{theorem}{Theorem}[section]

\tcolorboxenvironment{theorem}{
  enhanced,
  borderline={0.8pt}{0pt}{blue},
  borderline={0.4pt}{2pt}{blue},
  boxrule=0.4pt,
  colback=white,
  coltitle=black,
}

\newtheorem{corollary}{Corollary}[theorem]
\newtheorem{example}[theorem]{Example}
\newtheorem{lemma}[theorem]{Lemma}
\newtheorem{conj}[theorem]{Conjecture}

\tcolorboxenvironment{lemma}{
  enhanced,
  borderline={0.8pt}{0pt}{violet},
  borderline={0.4pt}{2pt}{violet},
  boxrule=0.4pt,
  colback=white,
  coltitle=black,
}

\tcolorboxenvironment{conj}{
  enhanced,
  borderline={0.8pt}{0pt}{red},
  borderline={0.4pt}{2pt}{red},
  boxrule=0.4pt,
  colback=white,
  coltitle=black,
}

\newtheorem*{fact}{Fact}
\newtheorem*{obs}{Observation}
\newtheorem*{claim}{Claim}
\theoremstyle{remark}
\newtheorem*{remark}{Remark}

\theoremstyle{definition}
\newtheorem*{definition}{Definition}

\tcolorboxenvironment{definition}{%
  colback=gray!25, colframe=gray!25,
  coltitle=black, fonttitle=\bfseries,
  sharp corners,
  width=\linewidth}

\newtheorem{prop}[theorem]{Proposition}
\tcolorboxenvironment{prop}{
  enhanced,
  borderline={0.8pt}{0pt}{purple},
  borderline={0.4pt}{2pt}{purple},
  boxrule=0.4pt,
  colback=white,
  coltitle=black,
}

\tcolorboxenvironment{remark}{
  borderline={1pt}{4pt}{black},
  boxrule=0.8pt,
  colback=white,
  coltitle=black,
  sharp corners
}

\tcolorboxenvironment{obs}{
  borderline={1pt}{4pt}{black},
  colback=white,
  coltitle=black,
  sharp corners,
  boxrule=0.4pt
}

\newcommand{\N}{\mathbb{N}}
\newcommand{\Ima}{\text{Im}}


\maketitle

\begin{abstract}
    In this paper, we investigate generalizations of the Mahler-Popkens complexity of integers. Specifically, we generalize to $k$-th roots of unity, polynomials over the naturals, and the integers mod $m$. In cyclotomic rings, we establish upper and lower bounds for integer complexity, investigate the complexity of roots of unity using cyclotomic polynomials, and introduce a concept of ``minimality.'' In polynomials over the naturals, we establish bounds on the sizes of complexity classes and establish a trivial but useful upper bound. In the integers mod $m$, we introduce the concepts of ``inefficiency'', ``resilience'', and ``modified complexity.'' In hopes of improving the upper bound on the complexity of the most complex element mod $m$, we also use graphs to visualize complexity in these finite rings.
\end{abstract}

\section{Introduction}
Integer complexity is a system that attempts to quantify our intuitions on how `complex' a number is. 

\begin{definition}
    The \textbf{complexity} of a number $n$, denoted as $||n||$ is equal to the smallest number of 1's needed to write $n$ as an equation using addition and multiplication.
\end{definition}

For example, 
\[
    6 = (1+1)(1+1+1)
\]
which implies $||6|| \leq 5$. In fact, this is the complexity of 6. Here is another example for 7. 
\[
    7 = (1+1)(1+1+1)+1
\]
Though the following definitions are somewhat pedantic, they are necessary to reduce ambiguity. We take the definition of expression presented in \cite{Reyna2021}, where the idea is defined more rigorously. 
\begin{definition}
    An \textbf{expression} refers to a sequence of $x$'s, which are joined together by $+, \cdot,$ and $()$ in the usual sense.
\end{definition}
For example, a valid expression is
    \[(x+x+x)\cdot (x+x) + x \cdot x \cdot x\]
As shorthand, we would write 
    \[3x \cdot 2x + x^3 \]
for the above. Keep in mind that this is merely shorthand and not a valid expression. 
\begin{definition}
    The \textbf{representation} of a \textbf{base} $k$ is an expression with $x=\zeta_k$ which evaluates to be $n$.
\end{definition}
Note that the examples that we've provided have $x=1$ and $k=1$. 

\begin{definition}
    A representation is \textbf{optimal} when $||n||$ is equal to the number of $x$'s used in the expression. 
\end{definition}

 There is some pre-existing literature on the subject, but none has yet explored the potential of integer complexity in other rings. In this paper, we will begin by generalizing the idea of integer complexity into cyclotomic fields. Then, we will investigate integer complexity in polynomial rings, an application that arises naturally from the definition of an expression. Finally, we shall look into complexity in the integers mod $m$ and the integers mod $p$, where $p$ is prime. 
 
 Much of this exploration shall consist of establishing upper and lower bounds on complexity and the size of complexity classes, often by means of Horner's method. When investigating complexity in cyclotomic rings, we will additionally focus on finding some integers $k$ for which $||1||_k = k$ in the $k$th cyclotomic field, and some integers $k$ for which this is not the case. When dealing with finite (quotients of integers) rings and fields, we will explore the idea that, by well-ordering, there exists an element(s) with the largest complexity, and that the complexity of elements changes depending on what unit is used as a base in place of 1.

\section{Cyclotomic Fields}
As one studies the complexity of some integers, they will sometimes wish that they could subtract. For example, Mersenne primes would generally have a much smaller complexity if one could subtract 1. The logical next step in integer complexity would be to use roots of unity. For instance when we let $k=\zeta_2=-1$, we can now subtract and add. However, note that $||1||_2 = k$ as we change $k$. Through numerical calculation, we have observed that most numbers increase in complexity while others decrease. 

\begin{definition}
    While we are working with cyclotomic fields, let $\zeta_k = e^{2\pi i/k}$. The complexity of $n$ with base $\zeta_k$ is denoted as $||n||_k$. 
\end{definition}

We are familiar with $\zeta_4$, also known as $i$. Let us consider integer complexity when we use $i$ as the base. We have easy access to subtraction with $i^2$. However in other $k$, subtracting may not be so easy, but is still doable.

\begin{figure}[H]
    \centering
    \captionsetup{justification=centering,margin=1cm}
    \includegraphics[width = .9\textwidth]{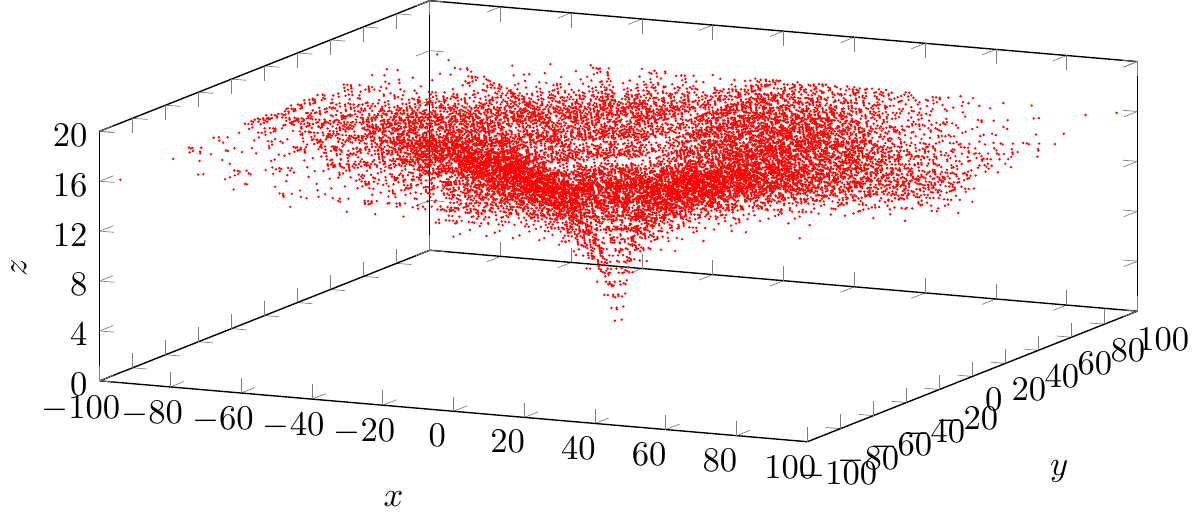}
    \caption{A plot of the real component ($x$) and imaginary component ($y$) vs. the complexity of that complex number (on the $z$ axis) using $\zeta_4$.}
    \label{fig:k43d}
\end{figure}

\begin{prop}
We can construct a representation of all elements of $\mathbb{Z}[\zeta_k]$ with $\zeta_k$. 
\end{prop}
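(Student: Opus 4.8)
The plan is to show that every element of $\mathbb{Z}[\zeta_k]$ can be built from copies of $\zeta_k$ using only addition and multiplication. First I would recall that $\mathbb{Z}[\zeta_k]$ is generated as a ring (indeed as a $\mathbb{Z}$-module) by the powers $1, \zeta_k, \zeta_k^2, \dots, \zeta_k^{\varphi(k)-1}$, so a typical element has the form $\sum_{j} a_j \zeta_k^j$ with $a_j \in \mathbb{Z}$. Since our only operations are $+$ and $\cdot$ applied to the single atom $\zeta_k$, the key observation is that each power $\zeta_k^j$ is itself a valid expression (a product of $j$ copies of $\zeta_k$), and that a positive integer coefficient $a_j$ can be realized by repeated addition. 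So the only genuine obstacle is producing the integer $1$ and, more importantly, producing negative coefficients — i.e. realizing $-1$ — from $\zeta_k$ alone.

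The main step is therefore to exhibit $1$ and $-1$ as expressions in $\zeta_k$. For $1$: when $k$ is such that $\zeta_k^k = 1$, one can write $1 = \zeta_k \cdot \zeta_k \cdots \zeta_k$ ($k$ times); alternatively, using that the minimal polynomial $\Phi_k$ has constant term $\pm 1$, one gets a polynomial relation in $\zeta_k$ that isolates $\pm 1$. For $-1$: note $\zeta_k^{k/2} = -1$ when $k$ is even, and more generally $-1 = \zeta_k \cdot \zeta_k \cdots$ through an appropriate power, or one uses the relation $1 + \zeta_k + \zeta_k^2 + \cdots + \zeta_k^{k-1} = 0$ (for $k \ge 2$) to solve for a chosen term as the negative sum of the others — this is the cleanest route since it expresses $-\zeta_k^{m}$ as a sum of powers of $\zeta_k$, hence as an expression, for any $m$. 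Once $-1$ (equivalently, negation of any power) is available as an expression, an arbitrary element $\sum_j a_j \zeta_k^j$ is assembled by: forming each needed power $\zeta_k^j$, negating it via the cyclotomic relation if $a_j < 0$, adding $|a_j|$ copies together, and finally summing over $j$.

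I would organize the write-up as: (1) reduce to showing $\mathbb{Z}$-linear combinations of powers of $\zeta_k$ are expressible; (2) show every power $\zeta_k^j$ is an expression and every nonnegative integer multiple of it is an expression; (3) handle negative coefficients using $\sum_{i=0}^{k-1}\zeta_k^i = 0$ to rewrite $-\zeta_k^j$ as a sum of other powers; (4) conclude by additivity. The one case needing a small separate remark is $k = 1$ (where $\zeta_1 = 1$ and the statement is the classical fact that every positive integer has a representation, while $\mathbb{Z}[\zeta_1] = \mathbb{Z}$ contains negatives — so strictly one should read the proposition as being about $k \ge 2$, or note that for $k=1$ only $\mathbb{N}$ is reachable and the intended ring is $\mathbb{N}$).

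The step I expect to be the main obstacle is cleanly justifying the negation trick uniformly in $k$: writing $-\zeta_k^j = \sum_{i \ne j,\, 0 \le i \le k-1} \zeta_k^i$ is valid, but one should make sure the resulting expression genuinely uses only $+$ and $\cdot$ on $\zeta_k$ (it does, since each $\zeta_k^i$ is a product of copies of $\zeta_k$, with $\zeta_k^0 = 1$ itself obtained as $\zeta_k^k$), and one must confirm this handles \emph{every} element, including those whose natural coordinate representation already lives in the reduced basis $\{1,\dots,\zeta_k^{\varphi(k)-1}\}$ — there, the same argument applies since those powers are a subset of $\{\zeta_k^0,\dots,\zeta_k^{k-1}\}$. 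A secondary subtlety is that this argument is purely existential and gives no control on the number of $\zeta_k$'s used; that is fine for the proposition as stated, and the quantitative refinements are presumably the subject of the bounds promised in the introduction.
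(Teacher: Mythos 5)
Your argument is correct and is essentially identical to the paper's own proof: both reduce to writing an element as an integer combination of powers of $\zeta_k$ and then realize negative coefficients via the relation $-\zeta_k^j = \sum_{\ell \ne j,\,0 \le \ell < k} \zeta_k^\ell$, with $0$ obtained by summing all the roots of unity. Your extra remarks on $\zeta_k^0 = \zeta_k^k$ and the $k=1$ edge case only make the write-up slightly more careful than the paper's.
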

\begin{proof}
	We know we can represent any element $a \in \mathbb{N}[\zeta_k]$ as \(a_0 + a_1\zeta + a_2\zeta^2 + \cdots  \). For some arbitrary positive integer $j$, we can say
	\[
	-\zeta_k^j = \sum_{\substack{\ell \in \mathbb{Z}\\ 0 \leq \ell < k \\ \ell \neq j}} ^k \zeta_k^\ell
	\]
	Thus we can obtain any negative coefficients for each root of unity through the repeated addition of this above representation. We can represent 0 by summing all the roots of unities. 
\end{proof}

We can replace the $1$'s in the $k=1$ representation of a natural number with $\zeta_k^k$ and find that $\forall n \in \mathbb{N}, ||n||_k \leq k||n||$. In most cases, this is not the optimal representation.
To create bounds for $k \geq 1$, we take inspiration from pre-existing bounds of integer complexity when $k=1$ and attempt to generalize them for higher $k$. 

\begin{lemma} \label{lem:magnitude}
    Let $f(x)$ be an arbitrary expression that uses the input as the symbol. Then, for all $f,  |f(\zeta_k)| \leq f(1)$. 
\end{lemma}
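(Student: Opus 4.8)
The plan is to proceed by structural induction on the expression $f$, following the recursive definition given above: every expression is either the single symbol, or a sum $g+h$ of two expressions, or a product $g\cdot h$ of two expressions, possibly wrapped in parentheses (which do not affect evaluation). The induction is most cleanly set up on the number of symbols occurring in $f$ — a positive integer that strictly decreases when we pass to a proper subexpression — so the recursion is well-founded.

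For the base case, $f$ is the lone symbol $x$. Then $|f(\zeta_k)| = |\zeta_k| = 1 = f(1)$, so the inequality holds (with equality). For the inductive step, first suppose $f = g + h$ with $g, h$ shorter expressions. By the triangle inequality and the induction hypothesis applied to $g$ and $h$,
\[
|f(\zeta_k)| = |g(\zeta_k) + h(\zeta_k)| \le |g(\zeta_k)| + |h(\zeta_k)| \le g(1) + h(1) = f(1).
\]
Now suppose $f = g \cdot h$. Since the complex modulus is multiplicative, $|f(\zeta_k)| = |g(\zeta_k)|\,|h(\zeta_k)|$; applying the induction hypothesis to each factor gives $|g(\zeta_k)| \le g(1)$ and $|h(\zeta_k)| \le h(1)$, and multiplying these two inequalities yields $|f(\zeta_k)| \le g(1)h(1) = f(1)$.

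The one step that warrants a remark is the product case: multiplying the inequalities $|g(\zeta_k)| \le g(1)$ and $|h(\zeta_k)| \le h(1)$ termwise is legitimate only because all four quantities are nonnegative. This is automatic here, since evaluating any expression at $x = 1$ produces a sum of $1$'s, hence a positive integer; and moduli are nonnegative by definition. With that observation in place there is no real obstacle — the result is a straightforward structural induction, and the \emph{only} content is the multiplicativity and subadditivity of $|\cdot|$ together with the positivity of $f(1)$.
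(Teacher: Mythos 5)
Your proof is correct and follows essentially the same route as the paper's: a structural induction with the lone symbol as base case, the triangle inequality for sums, and multiplicativity of the modulus for products. Your version is slightly more careful in making the induction well-founded (counting symbols) and in noting the nonnegativity needed to multiply the two inequalities, but the argument is the same.
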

\begin{proof}
    Let $f(x), g(x)$ be an arbitrary expression that uses the input as the symbol. For example, if $f(x) = x(x+x)$, then $f(1) = 1(1+1)$ and $f(i) = i(i+i)$. We prove by induction. As our inductive step, consider $f(1), |f(\zeta_k)|$, and $|g(\zeta_k)|$. If $|f(\zeta_k)| \leq f(1)$ and $|g(\zeta_k)| \leq g(1)$, Then, we can see that 
    \[
        |f(\zeta_k) + g(\zeta_k)| \leq f(1) + g(1) 
    \]
    which we know is true, considering the geometry of vector addition in the complex plane/the triangle inequality, and note that
    \[
        |f(\zeta_k)g(\zeta_k)| \leq f(1)g(1)
    \]
    which is also true since magnitude is multiplicative.
    
    Then for our base case, we can check that for $f(x) = g(x) = x$, that $|f(\zeta_k)| = 1 = f(1)$ and $|f(\zeta_k) + g(\zeta_k)| = |2\zeta_k|$, which satisfies our hypotheses. We can build all other expressions from these, by definition. Thus, by induction, we can say that for all functions $f$, 
    \[
        |f(\zeta_k)| \leq f(1)
    \]
\end{proof}

\begin{theorem}
    \[
    3 \log_3 |n|  \leq ||n||_k
    \]
\end{theorem}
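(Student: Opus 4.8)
The plan is to reduce the statement to the classical lower bound for ordinary integer complexity, with Lemma~\ref{lem:magnitude} serving as the bridge. Let $n$ be any element representable with base $\zeta_k$, and let $f$ be an optimal representation of $n$, so that $f$ uses exactly $m := ||n||_k$ copies of the symbol and $f(\zeta_k) = n$. By Lemma~\ref{lem:magnitude}, $|n| = |f(\zeta_k)| \le f(1)$, and $f(1)$ is the value of an expression in $m$ copies of $1$ joined by $+$ and $\cdot$, hence a natural number writable with $m$ ones. So it suffices to prove the auxiliary bound: every natural number $N$ writable with $m$ ones satisfies $N \le 3^{m/3}$. Granting this, $|n| \le 3^{m/3}$, and taking $\log_3$ and rearranging gives $||n||_k = m \ge 3\log_3|n|$.

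To prove the auxiliary bound, let $E(m)$ denote the largest natural number obtainable from $m$ ones; I claim $E(m) \le 3^{m/3}$ for all $m \ge 1$, by strong induction on $m$. The base cases $m \in \{1,2,3\}$ are checked by hand ($E(1)=1$, $E(2)=2$, $E(3)=3$, with $1 < 3^{1/3}$, $2 < 3^{2/3}$, $3 = 3^{3/3}$). For $m \ge 4$, a value $N$ realizing $E(m)$ is produced by its outermost operation as $N = ab$ or $N = a+b$, where $a,b$ use $m_1,m_2$ ones with $m_1+m_2 = m$ and $1 \le m_1 \le m_2 < m$. If $N = ab$, the inductive hypothesis gives $N \le 3^{m_1/3}\,3^{m_2/3} = 3^{m/3}$. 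If $N = a+b$ and $m_1 \ge 2$, then $3^{m_1/3},3^{m_2/3} \ge 3^{2/3} > 2$, so $(3^{m_1/3}-1)(3^{m_2/3}-1) > 1$, i.e. $3^{m_1/3}+3^{m_2/3} \le 3^{m_1/3}\,3^{m_2/3}$, whence $N \le 3^{m/3}$. If $N = a+b$ with $m_1 = 1$, then $a = 1$ and $N = 1 + b \le 1 + 3^{(m-1)/3}$; since $m \ge 4$ we have $m-1 \ge 3$, so $3^{(m-1)/3}(3^{1/3}-1) \ge 1$, which is exactly $1 + 3^{(m-1)/3} \le 3^{m/3}$. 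This completes the induction, and hence the theorem.

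The step requiring the most care is the sum case of the auxiliary induction: the naive inequality $3^{a/3} + 3^{b/3} \le 3^{(a+b)/3}$ is false when $\min(a,b)$ is small, so the cases $\min(a,b) = 1$ must be separated out, and it is precisely the choice of $m \le 3$ as base cases that makes the $m_1 = 1$ subcase close for all $m \ge 4$. I would also append a short remark that the bound is only informative when $|n| \ge 1$: since $\mathbb{Z}[\zeta_k]$ may contain units of modulus less than $1$ (for instance $\zeta_5 + \zeta_5^{4} = 2\cos(2\pi/5) \approx 0.618$), the inequality holds vacuously for such elements, and its real content is the constraint it places on representing lattice points of large modulus.
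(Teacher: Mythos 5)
Your proposal is correct and follows essentially the same route as the paper: both use Lemma~\ref{lem:magnitude} to bound $|n| = |f(\zeta_k)| \le f(1)$ and then invoke the classical fact that an expression with $m$ ones evaluates to at most $3^{m/3}$. The only difference is that you reprove that classical bound by induction (correctly, including the delicate $m_1 = 1$ sum case) where the paper simply cites Altman's $k=1$ result.
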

\begin{proof}
    We know that the lower bound is true for when $k=1$ by Altman, \cite{Altman2012}. This is obtained from the representation
    \[
        n = (1+1+1)(1+1+1) \cdots
    \]
    having the largest value for its complexity.
    
    Let $g(x) = (x+x+x)(x+x+x) \cdots $. Since we know that $g(1)$ has the largest magnitude for its complexity, then we can say that any other expression $f(x)$ with the same complexity as $g(x)$ satisfies that $g(1) \geq f(1)$.
    
    We can see that $|g(\zeta_k)| = g(1), \forall k \in \mathbb{N}.$ By Lemma \ref{lem:magnitude}, for all expressions $f$, we have $|f(\zeta_k)| \leq f(1).$ But by Altman, $f(1) \leq g(1) = |g(\zeta_k)|$, so by transitivity, we can say that $|f(\zeta_k)| \leq |g(\zeta_k)|$. Then, for $n$ with complexity $k$, we have $|n| \leq 3^{\frac{k}{3}}$. This implies that $3 \log_3 |n|  \leq ||n||_k$. Thus, we obtain our lower bound.
\end{proof}

\begin{remark}
    Note that as in the $k=1$ case, this lower bound is the best lower log bound possible, as the lower bound is obtained by $g(\zeta_k) = 3^k = g(1)$. Also note that it is obtained infinitely often, by $3^{ki} \,\forall i \in\N$.
\end{remark}

\begin{prop} \label{prop:cycloweakupper}
\[
||n||_k \leq \frac{k(k+5)}{2} \cdot \left\lceil \frac{\lceil \log_2 n \rceil + 1}{k} \right\rceil
\]
\end{prop}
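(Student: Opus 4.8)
The plan is to build a representation of $n$ in base $2$, but "chunked" into blocks of $k$ binary digits so that we can exploit the base-$\zeta_k$ arithmetic efficiently, and then apply Horner's method over those chunks. Concretely, write $n$ in binary with $\lceil \log_2 n\rceil + 1$ digits (the $+1$ is slack to avoid off-by-one issues), and group those digits into $m := \lceil (\lceil \log_2 n\rceil + 1)/k\rceil$ blocks, each block being an integer in $\{0, 1, \dots, 2^k - 1\}$. Then $n = c_{m-1} (2^k)^{m-1} + \cdots + c_1 (2^k) + c_0$ with each $c_i < 2^k$, and Horner's scheme evaluates this as $(\cdots((c_{m-1}\cdot 2^k + c_{m-2})\cdot 2^k + c_{m-3})\cdots)\cdot 2^k + c_0$, requiring $m-1$ multiplications by $2^k$ and $m-1$ additions of a new coefficient $c_i$, plus the cost of producing the coefficients themselves.

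The next step is to bound the cost of each ingredient in base $\zeta_k$. First, I need a representation of $2^k$: since $2 = 1+1 = \zeta_k^k + \zeta_k^k$ costs $2k$ (replacing each $1$ by $\zeta_k^k$), and $2^k = 2\cdot 2 \cdots 2$ ($k$ factors) costs at most $2k \cdot k$ — but I suspect the intended bookkeeping is cheaper, e.g.\ reusing a single copy of $2$ is not allowed in this model, so $2^k$ genuinely costs about $2k^2$; I will need to check which counting the $\tfrac{k(k+5)}{2}$ figure is consistent with. Second, each coefficient $c_i \in \{0,\dots,2^k-1\}$ has complexity at most that of building all $k$ binary digits and combining them, which is $O(k)$ per coefficient; the factor $\tfrac{k(k+5)}{2} = \tfrac{k^2}{2} + \tfrac{5k}{2}$ strongly suggests the accounting is: one multiplication-by-$2^k$ costs roughly $\tfrac{k^2}{2}$-ish (or the per-block multiply is done $k$-fold doubling at cost $\sum$, i.e.\ $\binom{k}{2}$-like), and assembling each coefficient costs a linear-in-$k$ amount, so each of the $m$ blocks contributes $\tfrac{k(k+5)}{2}$ to the total. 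I would make this precise by: (a) fixing an explicit expression for multiplication-by-$2$ and iterating it, (b) fixing an explicit expression for an arbitrary $c\in\{0,\dots,2^k-1\}$ in terms of repeated doubling and adding $1 = \zeta_k^k$, and (c) summing the per-block costs over all $m$ blocks, using superadditivity $\|ab\|_k \le \|a\|_k + \|b\|_k$ and subadditivity $\|a+b\|_k \le \|a\|_k + \|b\|_k$ throughout.

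Finally, I would assemble the pieces: total cost $\le m \cdot (\text{cost per block})$, and argue the cost per block is at most $\tfrac{k(k+5)}{2}$ by a careful tally — within one Horner step we multiply the running total by $2^k$ (bounded by reusing the doubling expression $k$ times, contributing a triangular-number-like sum) and add a fresh coefficient of size $< 2^k$ (contributing the remaining linear-in-$k$ term). The main obstacle, and where I would spend the most care, is the exact constant: showing the per-block multiply-and-add really fits under $\tfrac{k(k+5)}{2}$ rather than, say, $k^2$, which likely hinges on a clever shared subexpression (e.g.\ building $1, 2, 4, \dots, 2^{k-1}$ once at cost about $2 + 2(k-1) = 2k$, then forming any $c_i$ as a subset-sum of these at cost $\le$ another $2^k$-free linear amount) — the word "weak" in the proposition's label suggests the authors are not optimizing the constant aggressively, so a somewhat lossy but clean tally should suffice. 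I would also handle the trivial edge cases ($n$ a power of $2$, small $n$) separately to make sure the ceilings behave.
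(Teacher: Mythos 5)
Your outline has the right flavor (binary expansion, Horner, blocks of $k$ digits), but it is missing the one idea that the stated constant actually comes from, and the accounting you sketch does not reach $\tfrac{k(k+5)}{2}$ per block. The paper's proof runs a \emph{single} Horner pass over the binary digits with multiplier $(x+x)=2\zeta_k$; since each such multiplication shifts the running total by one power of $\zeta_k$, the digit added at the $j$-th step must be $\zeta_k^{\,j}$ (cost $j$), and because $\zeta_k^{k}=1$ these powers cycle, so over one block of $k$ binary digits the digit costs are $1+2+\cdots+k=\tfrac{k(k+1)}{2}$ and the $k$ doublings cost $2k$, giving exactly $\tfrac{k(k+5)}{2}$. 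Your proposal never articulates this power-matching device, and your guessed attribution is backwards: the multiplication by $2^k$ is the cheap, linear part (it is just $(x+x)^k$, cost $2k$), while the quadratic $\tfrac{k^2}{2}$ term comes from the digits/coefficients, which are \emph{not} $O(k)$ in base $\zeta_k$ (already $\lVert 1\rVert_k$ can be as large as $k$).

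Concretely, each fallback you mention fails to hit the constant. (a) A two-level Horner (outer over base-$2^k$ chunks, each coefficient $c_i<2^k$ built separately) duplicates doublings: even using the power-matching trick inside each chunk, a worst-case coefficient costs about $\tfrac{k(k+1)}{2}+2(k-1)$, so the per-block total is about $\tfrac{k(k+1)}{2}+4k-2>\tfrac{k(k+5)}{2}$. (b) Building digits by "adding $1=\zeta_k^{k}$" costs $k$ per binary digit, i.e.\ about $k^2$ per block rather than $\tfrac{k^2}{2}$. (c) The "build $1,2,4,\dots,2^{k-1}$ once and reuse" idea is not available in this model: complexity counts every occurrence of a subexpression, with no sharing — as you yourself note earlier in the same paragraph, so this suggestion contradicts your own (correct) observation, and pricing the powers honestly ($2^j=(x+x)^j\zeta_k^{\,k-j}$ costs $k+j$) gives a per-coefficient cost of order $\tfrac{3k^2}{2}$. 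So as written the argument would prove a weaker bound with a larger per-block constant; to get the stated inequality you need the merged, power-matched Horner expression displayed in the paper (Equation \ref{eqn:expressionpowers}).
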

\begin{proof}
Again, we take inspiration from the proof of the upper bound by Altman, \cite{Altman2012}, which involves representing a natural number $n$ in binary. We use Horner's method. For an example of an application of this method, see Altman, \cite{Altman2012}. Then, we can see that the `worst case scenario' would arise in the case when $n$ in binary is represented as a string of all 1's. We can represent that in $k=1$ as
\[
    (((1+1)+1)(1+1) + 1) (1+1) + \ldots 
\]
It is tempting to generalize this to all $k$'s by replacing the 1's with $\zeta_k$. However, we run into a big problem. Consider $k=4$, so $\zeta_4 = i$. Then, consider the following: 
\begin{align*}
    &((i+i)+i)(i+i)+i\\
    =&((2i)+i)(2i)+i\\
    =&(3i)(2i) + i\\
    =&-6+i
\end{align*}
To salvage this system, we posit the following representation:
\begin{equation}\label{eqn:expressionpowers}
(((((x+x)+x)(x+x)+x^2)(x+x) + x^3) \ldots(x+x) + x^{k})(x+x) + x^1 +\ldots 
\end{equation}
This way, we do not run into the problem of adding different powers of $\zeta_k$. Note that because $x^{k + c} = x^{c}$ when $x = \zeta_k$, we can consider the binary expansion in groups of $k$ $x$'s. Then we can use the sum of an arithmetic sequence to see that to get each group of $k$ digits, we need $\frac{k(k+5)}{2}$ number of $x$'s. 

Recall that there are at most $ \left\lceil \frac{\lceil \log_2(n)\rceil + 2}{k} \right\rceil$ groups of $k$ digits. Thus it will take at most  $\left\lceil \frac{\lceil \log_2(n) \rceil + 1}{k} \right\rceil \times \frac{k(k+5)}{2}$ $x$'s, our proposition.
\end{proof}

We can attempt to use a similar method of generalizing base expansions to find an upper bound for the complexity of any element of $\mathbb{Z}[\zeta_k]$, not just natural numbers. We shall start by finding such a bound in the Gaussian Integers.

\begin{prop}
    For all $\alpha \in \mathbb{Z}[i]$, 
    \[
    ||z||_4 \leq 7\log_2|z| + 5
    \]
\end{prop}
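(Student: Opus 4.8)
The plan is to mimic the Horner's-method / base-expansion strategy used for natural numbers in Proposition~\ref{prop:cycloweakupper}, but now expanding $z \in \mathbb{Z}[i]$ in a base adapted to the Gaussian integers. The key observation is that $\mathbb{Z}[i]$ admits a base expansion in base $-1+i$ (or equivalently $1+i$): every Gaussian integer can be written as $\sum_{j=0}^{m} d_j (1+i)^j$ with digits $d_j \in \{0,1\}$. Since $|1+i| = \sqrt{2}$, the number of digits $m$ needed is roughly $2\log_2 |z|$, which is where the factor of $7$ (a bit more than $2 \cdot 2$, with slack for overhead) will come from. So the first step is to recall/establish this digit representation and bound $m$ in terms of $|z|$.

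Next I would build an expression for $z$ from this expansion using Horner's method: writing $z = (\cdots((d_m)\beta + d_{m-1})\beta + \cdots )\beta + d_0$ where $\beta = 1+i = i^2 \cdot(\text{something})$... more carefully, I need $\beta = 1+i$ to itself be a cheap expression in $i = \zeta_4$. Indeed $1+i$ is not directly available since we only have $\zeta_4$ as a base, but $1 = i^4 = \zeta_4^4$ (cost $4$) and $i = \zeta_4$ (cost $1$), so $1+i$ costs at most $5$; alternatively use $\beta = i - i^2 = i + 1$ written as $\zeta_4 + \zeta_4^2\cdot\zeta_4^2$, etc. Each digit $d_j$ is $0$ or $1$, costing $0$ or $4$. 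So each Horner step costs: one multiplication by $\beta$ (adding $\|1+i\|_4 \le $ some constant $c$ worth of $\zeta_4$'s, plus the multiplication is free in the count) and one addition of a digit (at most $4$). Summing over $\approx 2\log_2|z|$ steps gives a bound of the shape $(c+4)\cdot 2\log_2|z| + O(1)$; tuning $\beta$'s representation and counting the additive overhead carefully should bring the leading coefficient down to $7$ and the constant to $5$.

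The main obstacle — and the step deserving the most care — is getting the constants exactly right, in two places: (i) pinning down the precise number of base-$(1+i)$ digits as a function of $|z|$, including edge effects for $z$ near a power of $1+i$ and for $z$ with small norm (this is what forces the careful ceiling/additive-constant bookkeeping, exactly as the $\lceil\,\rceil$'s in Proposition~\ref{prop:cycloweakupper} did); and (ii) choosing the cheapest expression for the multiplier $\beta$ and for the "multiply-by-$\beta$-then-add-digit" block so that the per-digit cost is small enough. A secondary subtlety is that the expression must be a \emph{legal} expression in the sense of the paper — only $\zeta_4$'s joined by $+$, $\cdot$, parentheses — so I should double-check that forming $\beta = 1+i$ fresh at each Horner step (rather than "reusing" a subexpression, which the formalism doesn't allow) is what the cost accounting reflects; this is precisely why $\|1+i\|_4$ gets multiplied by the number of steps rather than added once. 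Once the digit count and per-step cost are locked down, the final inequality $\|z\|_4 \le 7\log_2|z| + 5$ follows by straightforward summation.
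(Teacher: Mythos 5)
There is a genuine quantitative gap: your choice of base has magnitude $\sqrt{2}$, and that loses a factor of $2$ in the digit count which cannot be recovered by ``tuning.'' If $z=\sum_j d_j\beta^j$ with $\beta=\pm 1+i$, then $|\beta|=\sqrt 2$ forces roughly $\log_{\sqrt2}|z| = 2\log_2|z|$ Horner steps. Each step must re-write $\beta$ from scratch (as you correctly note, subexpressions cannot be reused), and the cheapest admissible multiplier of this magnitude is $-1+i=\zeta_4+\zeta_4\cdot\zeta_4$ at cost $3$ (your $1+i$ costs $5$, since $1=\zeta_4^4$); a digit $d_j=1$ costs another $4$. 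So the worst case of your scheme is at least $(3+4)\cdot 2\log_2|z| \approx 14\log_2|z|$ (and $\approx 18\log_2|z|$ with $\beta=1+i$), not $7\log_2|z|+5$. Your heuristic that the factor $7$ is ``a bit more than $2\cdot 2$'' implicitly assumes a per-digit cost of about $3$--$4$, but the per-step cost is really $\|\beta\|_4$ plus the digit cost, i.e.\ at least $7$ in the worst case, so the leading constant doubles. (A side remark: base $1+i$ with digits $\{0,1\}$ does not even represent all of $\mathbb{Z}[i]$; one needs $-1\pm i$ for that, though this is a smaller issue than the constant.)

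The paper avoids this by taking the base to be $2i$, which simultaneously has magnitude $2$ and complexity $2$ ($\zeta_4+\zeta_4$), at the price of a larger digit set: one divides by $2i$ so that the quotient's magnitude is at most half of $|z|$ (this is why associates of remainders such as $1-i$ versus $i-1$ must be allowed), and the most expensive remainder needed is $1-i$, of complexity $5$. This gives at most $\log_2|z|$ multiplications costing $2$ each and at most $\log_2|z|+1$ digits costing at most $5$ each, hence $2\log_2|z|+5(\log_2|z|+1)=7\log_2|z|+5$. To salvage your argument you would essentially have to group pairs of base-$(\pm1+i)$ digits into a single digit for the base $(\pm1+i)^2=\pm 2i$, which is the paper's construction; without passing to a base of magnitude $2$ with a $2$-cheap expression, the stated constant is out of reach by this method.
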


\begin{proof}
This can be found by writing $\alpha$ in base $2i$, which has a complexity of 2. Meanwhile, the remainder with the maximum complexity is $1-i$, which has a complexity of 5.

Every time we divide $\alpha$ by 2i, we can get a quotient with a magnitude that is at most half of the magnitude of $\alpha$. Therefore, we can divide no more than $\log_2|\alpha|$ times. Meanwhile, the base $2i$ expansion has at most $\log_2|\alpha|+1$ digits, and each of these digits/remainders has a maximum complexity of 5. Thus, our base 2i expansion of $\alpha$ will use at most $2\log_2|\alpha| + 5(\log_2|\alpha|+1) = 7\log_2|\alpha| + 5$.
\end{proof}

\begin{remark}
    It should be noted that although there are associates of $1 - i$ that have less complexity (for example, $||i - 1|| = 3$), it is necessary to consider all associates. For example, if we are trying to divide $1-5i$ by $2i$, we could write it as $-2(2i)+(1-i)$, but we could also write it as $(-3-i)(2i)+(i-1)$. Even though the latter option has a less complex remainder, we would still have to utilize the former instead because we need our quotient to have a magnitude less than half of the magnitude of $1-5i$ for this technique to work.
\end{remark}

Unfortunately, if we attempt to use this method to create an upper bound for all elements of $\mathbb{Z}[\zeta_k]$, we run into a problem in that a division algorithm does not exist for most cyclotomic rings, since the remainder could be `larger' than the divisor. Furthermore, it is possible to add units in cyclotomic rings to get elements with magnitudes less than 1, so it is difficult to find a most complex remainder for a given element. Therefore, we shall define an alternative division algorithm in $\mathbb{Z}[\zeta_k]$. For the purposes of this proof, we will be using $b \zeta_k$, for some $b \in \mathbb{N}$, as the divisor and define this alternative division algorithm only for non-multiplex numbers (see Definition \ref{def:multiplex}). 

More rigorously, for $n \in \mathbb{Z}[\zeta_k], n = a_1 \zeta_k+a_2 \zeta_k^2 + \ldots + a_k \zeta_k^k$. Then, dividing by $b \zeta_k$ will give us a quotient of:
\[
    q= \sum _{i=1}^k \left \lfloor \frac{a_i}{b} \right \rfloor \zeta_{k}^{i-1}
\]
Then, the remainder would be the difference between $n$ and $q \cdot b \zeta_k$. It is okay for the magnitude of the remainder to be more than the divisor; what is important is that the coefficient for each power of $\zeta_k$ in the remainder is less than $b$. 

Using this, we can create an alternative Euclidean algorithm to reduce $n$. We can see that this algorithm terminates by applying the well-ordering principle on the constants. 

\begin{obs}
We acknowledge that terms in the expansion $n = a_1 \zeta_k+a_2 \zeta_k^2 + \ldots + a_k \zeta_k^k$ may cancel each other out in certain places. However, this is still a valid representation, as the coefficients may be 0. 
\end{obs}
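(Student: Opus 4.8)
The plan is to read the observation as two separate assertions and dispatch each in turn: first, that every $n \in \mathbb{Z}[\zeta_k]$ genuinely admits an expansion $n = a_1\zeta_k + a_2\zeta_k^2 + \cdots + a_k\zeta_k^k$ with $a_i \in \mathbb{Z}$; and second, that feeding such an expansion into the expression-building procedure yields a bona fide representation of $n$ even when the summands partially cancel or some $a_i$ vanish. For the existence claim I would begin from the description $\mathbb{Z}[\zeta_k] = \{\, p(\zeta_k) : p \in \mathbb{Z}[x] \,\}$ and use the single relation $\zeta_k^k = 1$ to reduce every exponent appearing in $p$ modulo $k$ and then collect like powers; this rewrites $n$ in exactly the advertised shape, with $\zeta_k^k$ absorbing the constant term of $p$.

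For the second assertion I would lean on the Section~1 definitions of \emph{expression} and \emph{representation} together with the opening Proposition of this section. The key point is that an expression is a purely syntactic object --- a string of $x$'s glued by $+$, $\cdot$, and parentheses --- and that being a representation of $n$ means only that the expression evaluates to $n$ when $x = \zeta_k$; nothing in those definitions demands that the constituent subexpressions evaluate to complex numbers in any particular position, nor that no two of them cancel. So I would build the representation termwise: for each index $i$ with $a_i > 0$, append $a_i$ copies of a representation of $\zeta_k^i$; for each $i$ with $a_i < 0$, append $|a_i|$ copies of the representation of $-\zeta_k^i$ furnished by the opening Proposition of this section (the sum of the remaining $k$-th roots of unity); and simply skip every index with $a_i = 0$. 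By construction the resulting expression evaluates to $\sum_{i=1}^{k} a_i \zeta_k^i = n$, and since evaluation is just arithmetic in $\mathbb{C}$, whatever cancellation occurs among the pieces is irrelevant to the final value. The degenerate case $n = 0$ is already handled by that Proposition's remark that $0$ is the sum of all the $k$-th roots of unity.

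I do not anticipate a real obstacle here: the observation is a consistency check rather than a theorem, so essentially all of the work is in being precise. The one point that deserves an explicit sentence is \emph{why} non-uniqueness of the expansion is harmless. Because $\deg \Phi_k = \varphi(k) < k$ for every $k > 1$, the powers $1, \zeta_k, \ldots, \zeta_k^{k-1}$ are $\mathbb{Z}$-linearly dependent --- already $\sum_{i=1}^{k} \zeta_k^i = 0$ --- so many different coefficient tuples $(a_1, \ldots, a_k)$ describe the same $n$, and some of these tuples carry zero entries or exhibit visible cancellation. Since ``representation'' is defined by the value of the expression and not by any canonical form, each such tuple still yields an equally valid representation. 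That is exactly what the observation asserts, and it is what licenses the alternative division algorithm to use whichever expansion it happens to produce at each step.
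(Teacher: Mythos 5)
Your proposal is correct, and it matches the reasoning the paper leaves implicit: the Observation is stated without proof as a consistency remark, and your argument simply makes explicit what it relies on, namely that a representation is defined only by the value of the expression at $x=\zeta_k$ (so cancellation and zero coefficients are harmless) together with the section's opening Proposition, which supplies $-\zeta_k^j$ as a sum of the remaining roots of unity and $0$ as the sum of all of them. The only point worth noting is that the surrounding theorem takes $a_i \in \mathbb{N}$, and your same device (adding multiples of $\sum_{\ell}\zeta_k^{\ell}=0$) is exactly what converts your integer-coefficient expansion into one with non-negative coefficients, so your argument covers the form the paper actually uses.
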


\begin{theorem}
For any element of $n \in \mathbb{Z}[\zeta_k]$ and for $a_i \in \mathbb{N}$, we can express $n$ as:
\[n = a_1 \zeta_k+a_2 \zeta_k^2 + \ldots + a_k \zeta_k^k \]
Then the upper bound for the complexity of $n$ is: 
\[||n||_k \leq (2k - \frac{k}{p} +1)(\log_2|a_m| + 1) - 2\]
where $p$ is the smallest prime divisor of $k$ and $a_m = \max(a_1, a_2, \ldots, a_k)$. 
\end{theorem}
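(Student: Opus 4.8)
The plan is to follow the template of Proposition~\ref{prop:cycloweakupper} and of the Gaussian case: expand $n$ in the ``base'' $2\zeta_k$ using the alternative division algorithm defined above, then rebuild $n$ by Horner's method, bounding the cost of each digit carefully enough to extract the $k/p$ saving.

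\textbf{Base $2\zeta_k$ expansion.} Starting from $n = a_1\zeta_k + \cdots + a_k\zeta_k^k$, I would repeatedly apply the alternative division algorithm with divisor $b\zeta_k$, taking $b=2$. One division step sends the coefficient vector $(a_1,\dots,a_k)$ to $(\lfloor a_1/2\rfloor,\dots,\lfloor a_k/2\rfloor)$, cyclically re-indexed via $\zeta_k^{0}=\zeta_k^{k}$, and leaves a remainder whose coefficients are the residues $a_i\bmod 2$; thus each remainder has the form $r=\sum_{i\in S}\zeta_k^i$ with $S\subseteq\{1,\dots,k\}$. The largest coefficient is at most halved each step and starts at $a_m$, so after $L:=\lfloor\log_2 a_m\rfloor+1$ steps all coefficients vanish, giving digits $r_0,\dots,r_{L-1}$ with
\[ n = r_0 + (2\zeta_k)r_1 + (2\zeta_k)^2 r_2 + \cdots + (2\zeta_k)^{L-1}r_{L-1}. \]
Termination and validity are exactly the well-ordering argument already given.

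\textbf{Cost of one digit.} Fix a digit $r=\sum_{i\in S}\zeta_k^i$. First I would trim $S$: writing $k=pm$ with $p$ the least prime divisor of $k$, the element $\zeta_k^m$ is a primitive $p$-th root of unity, so $\zeta_k^{c}+\zeta_k^{c+m}+\cdots+\zeta_k^{c+(p-1)m}=\zeta_k^{c}\sum_{j=0}^{p-1}\zeta_p^{\,j}=0$ for every $c$. The $m=k/p$ blocks $\{c,c+m,\dots,c+(p-1)m\}\pmod{k}$ partition $\{1,\dots,k\}$, so deleting from $S$ every block that happens to lie entirely in $S$ changes neither $r$ nor the value of the digit, and produces $S'\subseteq S$ meeting each block in at most $p-1$ points; hence $|S'|\leq (p-1)k/p = k-k/p$. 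Then I would build $r=\sum_{i\in S'}\zeta_k^i$ by Horner's method on the exponents: with $x=\zeta_k$ and $t=\max S'$, set $E_{t+1}=0$ and $E_j = x\,E_{j+1} + [\,j\in S'\,]\,x$ for $j=t,\dots,1$, so that $E_1=r$; this uses one copy of $x$ per multiplication plus one per element of $S'$, i.e. at most $t+|S'|-1\leq k+(k-k/p)-1 = 2k-\tfrac{k}{p}-1$ copies of $\zeta_k$ (and none when $r=0$).

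\textbf{Reassembly and count.} Finally expand the base-$2\zeta_k$ form of $n$ by Horner's method, $n = r_0 + 2\zeta_k\bigl(r_1 + 2\zeta_k\bigl(r_2 + \cdots + 2\zeta_k\,r_{L-1}\bigr)\cdots\bigr)$, which uses the $L$ digits (each at most $2k-\tfrac{k}{p}-1$ copies) together with $L-1$ multiplications by $2\zeta_k=\zeta_k+\zeta_k$ (two copies each), for a total of at most
\[ L\Bigl(2k-\tfrac{k}{p}-1\Bigr)+2(L-1) = L\Bigl(2k-\tfrac{k}{p}+1\Bigr)-2. \]
Since $L=\lfloor\log_2 a_m\rfloor+1\leq \log_2|a_m|+1$ and the coefficient of $L$ is positive, this yields $||n||_k \leq \bigl(2k-\tfrac{k}{p}+1\bigr)(\log_2|a_m|+1)-2$, as claimed.

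\textbf{Where the difficulty lies.} The outer Horner bookkeeping is routine and essentially identical to the earlier propositions. The substantive step is the per-digit estimate: one must notice that Horner on the exponents costs only $\max S+|S|-1$ rather than something quadratic in $k$, and that the identity $\sum_{j=0}^{p-1}\zeta_p^{\,j}=0$ is precisely what allows us to assume $|S|\leq k-k/p$ — this is the origin of the $k/p$ term. Some care is also needed with the edge cases (an all-zero remainder, and the cyclic re-indexing through $\zeta_k^{k}=1$), and one should recall that the division algorithm used here is only set up for non-multiplex $n$.
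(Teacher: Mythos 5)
Your proof is correct and takes essentially the same route as the paper: the same base-$2\zeta_k$ expansion via the alternative division algorithm, the same per-digit cost of $2k - \tfrac{k}{p} - 1$ obtained from Horner's method combined with the vanishing sum of the $p$-th roots of unity, and the same final count $\bigl(2k - \tfrac{k}{p} + 1\bigr)(\log_2 a_m + 1) - 2$. Your block-trimming step is in fact a slightly more careful justification of the paper's ``worst case remainder'' claim (and note the paper's non-multiplex restriction is on the divisor $b\zeta_k$, not on $n$), but the substance is identical.
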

\begin{proof}
Consider applying the alternative Euclidean algorithm to $n$ with divisor $2 \zeta_k$. For some $b_i, r_i \in \mathbb{Z}[\zeta_k]$ satisfying the conditions of our alternative division algorithm, we have: 
\begin{align*}
    n &= 2\zeta_k \cdot b_1  + r_1\\
    b_1 &= 2\zeta_k \cdot b_2 + r_2\\
    & \vdots \\
    b_{s-1} &= 2 \zeta_k \cdot b_s + r_s\\
    b_s &= 2 \zeta_k \cdot 0 + b_s
\end{align*}
Since the reduction ends when all the coefficients of $\zeta_k^i$ of $b_s$ are less than 2, we can conclude that we will need $\log_2 a_m +1$ such equations. 

We can see that we can write
\[n=2\zeta_k(\ldots 2\zeta_k(2\zeta_k(2\zeta_k \cdot b_s+r_s) + r_{s-1}) + r_{s-2}) \ldots ) + r_1\]
Each time we multiply by $2\zeta_k$, we also add by the remainder, $r_i$. The coefficients for the roots of unity must be 1 or 0. Since we are looking for the `worst case scenario', we want to maximize the number of $\zeta_k$'s involved. However, if we sum all the roots of unity, they will cancel and we will end up with 0. Thus there is a limit to the number of roots of unity that we can include.

Let $p$ be the smallest prime divisor of $k$. Consider taking the powers mod $p$. Then, there would be $\frac{k}{p}$ elements in each residue class modulo $p$. If we sum every element of each residue class, we will obtain 0. Therefore, we must leave out one of these elements. Then, the maximal complexity we can obtain for $b_s$ (established later with the representation with Horner's algorithm, and note that this can obtain all possible most complex remainders) would be if we removed the first element in each residue class, namely $\zeta_k, \zeta_k^{2}, \ldots, \zeta_k^{k/p}$. We can see that once we remove these $p-1$ roots of unity, no other cancellation will occur. 

Just representing this sum of roots of unity by multiplying $\zeta_k$ many times itself is inefficient. Instead, we can represent $r_i$ with Horner's algorithm again, namely 
\[
    r_i = \zeta_k^{\frac{k}{p}}(\zeta_k + \zeta_k(\zeta_k + \zeta_k(\ldots + \zeta_k(\zeta_k + \zeta_k\zeta_k)))), 
\]
which uses $(k-\frac{k}{p}-1)2 + \frac{k}{p} +1 = 2k - \frac{k}{p} -1$.
So, at each reduction (of which there will be $\log_2 a_m + 1 -1 $), we will need $(2k - \frac{k}{p} -1)+2$ $\zeta_k$'s, in addition to another $(2k - \frac{k}{p} -1)$ $\zeta_k$'s for the final coefficient. Thus, we are able to represent any element $n \in \mathbb{Z}[\zeta_k]$ with at most $(2k - \frac{k}{p} +1)(\log_2|a_m| + 1) - 2$ elements, simplifying these terms. 
\end{proof}

Trivially, we can also see the following corollary for prime values of $k$:

\begin{corollary}
    For any element of $n \in \mathbb{Z}[\zeta_k]$ such that $k$ is prime, and for $a_i \in \mathbb{N}$, the upper bound for the complexity of $n$ is: 
\[||n||_k \leq 2k(\log_2|a_m| + 1) - 2\]
where $a_m$ is the $\max(a_1, a_2, \ldots, a_k)$. 
\end{corollary}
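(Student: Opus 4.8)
The plan is to obtain this corollary directly from the preceding theorem by specializing the parameter $p$; there is essentially no new content to prove. The one fact I would invoke is elementary: if $k$ is prime, then the smallest prime divisor of $k$ is $k$ itself, so $p = k$ and hence $k/p = 1$.

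First I would check that the hypotheses of the corollary match those of the theorem exactly. By the earlier proposition — together with the observation that negative coefficients can be cleared using the relation $-\zeta_k^j = \sum_{\ell \neq j}\zeta_k^\ell$ — every $n \in \mathbb{Z}[\zeta_k]$ can be written as $n = a_1\zeta_k + a_2\zeta_k^2 + \cdots + a_k\zeta_k^k$ with each $a_i \in \mathbb{N}$, which is precisely the form the theorem requires. Applying the theorem then gives
\[
||n||_k \le \left(2k - \frac{k}{p} + 1\right)\bigl(\log_2|a_m| + 1\bigr) - 2,
\]
and substituting $k/p = 1$ collapses the leading coefficient to $2k - 1 + 1 = 2k$, which is exactly the claimed bound.

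The only point I would pause on — the closest this argument has to an obstacle — is to confirm that the construction inside the theorem does not degenerate when $k$ is prime. In that case each residue class modulo $p = k$ is a singleton, so the sole source of cancellation among the roots of unity is the global identity $\sum_{\ell=1}^{k}\zeta_k^\ell = 0$; the worst-case remainder therefore retains $k-1$ of the roots of unity, and the Horner count $(k - \frac{k}{p} - 1)\cdot 2 + \frac{k}{p} + 1 = 2k - 2$ remains valid and nonnegative for $k \ge 2$. So nothing breaks, and the corollary follows with no additional work beyond the substitution.
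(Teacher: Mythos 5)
Your proposal is correct and matches the paper's (unstated, declared trivial) argument: when $k$ is prime the smallest prime divisor is $p=k$, so $k/p=1$ and the theorem's coefficient $2k-\frac{k}{p}+1$ collapses to $2k$. Your extra check that the construction does not degenerate for prime $k$ is a sensible sanity check but adds nothing beyond the paper's direct substitution.
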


\begin{remark}
In the case where $k=4$, using our bound from alternative division, which turns out to be $7\log_2|a_m| + 5$, is more effective than using the earlier bound based on the magnitude: $a_m$ is always at most equal to the magnitude of a Gaussian Integer, and in most cases, it is smaller. This is because it is impossible to add multiples of distinct roots of unity in a non-reducible manner in the Gaussian Integers such that their sum has a smaller magnitude than one of the components. The same is also true for $k = 2, k = 3$, and, trivially, $k = 1$, but it is not true for larger values of $k$. This can be proven using the law of cosines, noting that the smallest angle between elements of $\mathbb{Z}[\zeta_k]$ on the complex plane is $\frac{2\pi}{k}$. In these cases, it would have been more valuable to have a magnitude-based upper bound.
\end{remark}

Using the idea of multiplying and adding to construct a representation for $n$, we attempt to create a tighter bound for the naturals. We take inspiration from Zelinsky's work in normal integer complexity and employ a method called ``switching bases.''

\begin{figure} [H]
    \centering
    \captionsetup{justification=centering,margin=1cm}
    \includegraphics[width = .75\textwidth]{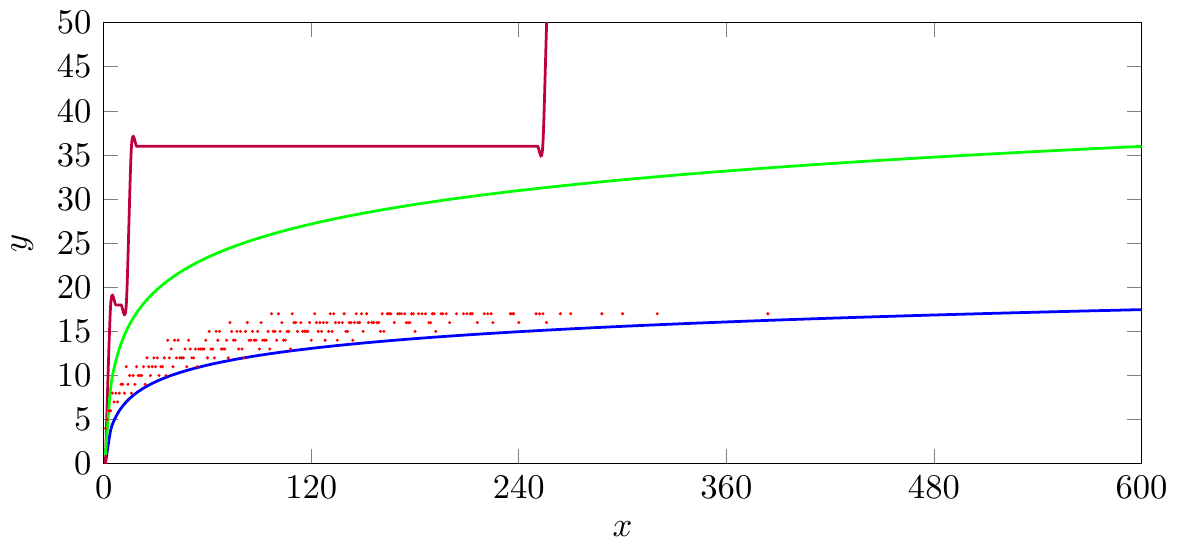}
    \caption{A plot of the complexity with $k=4$ ($y$) vs a natural number ($x$) up to complexity 18. The weak upper bound is shown in red, the stronger upper bound (inspired by Zelinsky) in green, and the lower bound in blue.}
    \label{fig:complexints}
\end{figure}

\begin{prop}
\[||n||_k \leq \frac{k(k-1)}{2} \cdot \left\lceil \frac{\lceil \log_2 n \rceil + 1}{k} \right\rceil + \dfrac{5}{\ln(2) + \frac{1}{2} \ln(3)} \ln(n)\]
\end{prop}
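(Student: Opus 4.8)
The plan is to build a single representation of $n$ out of two interlocking layers: an outer layer that is essentially Horner's method with $\zeta_k$ as the running multiplier (exploiting $\zeta_k^{k}=1$, exactly as in representation~\eqref{eqn:expressionpowers}), and an inner ``switching bases'' layer in the spirit of Zelinsky's upper bound for ordinary integer complexity, in which the magnitude of $n$ is accumulated not by pure doublings but by a mix of multiplications by $2$ and by $3$. The point of the switch is that a doubling step, realized as multiplication by $(x+x)$, costs two symbols and advances $\ln$ by $\ln 2$, while a tripling step, realized as $(x+x+x)$, costs three symbols and advances $\ln$ by $\ln 3$; since $3/\ln 3 < 2/\ln 2$, triplings are the cheaper multiplications, and an optimally balanced pattern of the two advances $\ln$ at the rate $\tfrac{5}{\ln 2 + \tfrac12\ln 3}$ symbols per nat (note $\ln 2 + \tfrac12\ln 3 = \tfrac12\ln 12$, so this is the rate of a ``two doublings and a tripling'' super-step, which multiplies the running value by $12$ at cost $10$). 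Crucially, in the cyclotomic setting each such multiplier step $(x+x)$ or $(x+x+x)$ automatically carries a factor of $\zeta_k$ with it, so the two layers share the same multiplications.

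First I would re-examine the per-block bookkeeping of Proposition~\ref{prop:cycloweakupper}. Group the binary expansion of $n$ into $\lceil(\lceil\log_2 n\rceil+1)/k\rceil$ blocks of $k$ positions, and arrange the construction so that each block contains exactly $k$ multiplier steps (each a $(x+x)$ or $(x+x+x)$), so that the accompanying factor of $\zeta_k^{k}=1$ returns the accumulator to phase zero at every block boundary. Within a block, at the stage where the accumulator has phase $\zeta_k^{j}$ ($1\le j\le k-1$) I inject the appropriate small digit by adding a term built from $\zeta_k^{j}$, which costs at most $j$ symbols; summing over the phases of a block gives the $\tfrac{k(k-1)}{2}$ contribution (an improvement of $3k$ per block over Proposition~\ref{prop:cycloweakupper}, because the $(x+x)$-multiplication symbols have been moved out of the block count and into the magnitude-accumulation count, and $\zeta_k^{k}$ itself no longer needs to be built). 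Second, I would count the multiplier steps: there are $O(\log n)$ of them overall, their costs of $2$ or $3$ are governed by Zelinsky's balanced pattern, and I would show their total is at most $\tfrac{5}{\ln 2 + \tfrac12\ln 3}\ln n$ after the $O(\log n)$-many digit additions are folded in. Adding the two contributions and absorbing the ceiling/floor rounding and the boundary effects at the top block yields the claimed inequality.

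The step I expect to be the main obstacle is reconciling the variable multiplier pattern demanded by the Zelinsky layer with the rigid period-$k$ structure demanded by the $\zeta_k$ layer: a ``switch to base $3$'' chosen for magnitude reasons must not desynchronize the block structure, or else the $\zeta_k$-phase bookkeeping, and hence the $\tfrac{k(k-1)}{2}$ per-block bound, breaks. The cleanest route around this is to keep the outer layer rigid — always exactly $k$ multiplier steps per block, so the $\zeta_k$ factors cycle regardless of whether each step is a $(x+x)$ or a $(x+x+x)$ — and to let the base-switching choose, freely and block-internally, which of the $k$ steps are doublings and which are triplings, then certify that some such choice reproduces the base-$\{2,3\}$ expansion of $n$ with the right digits placed at the right phases. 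A secondary difficulty is the bookkeeping of the digit-injection costs: one must check that placing the small base-$\{2,3\}$ digits at phases $\zeta_k^{1},\dots,\zeta_k^{k-1}$ never exceeds the stated $\tfrac{k(k-1)}{2}$-per-block allowance (in the worst case a $1$ at each phase, each costing $j$ symbols), and that the digits that would naturally land at phase $\zeta_k^{0}$ — where ``$1$'' is expensive, since $\lVert 1\rVert_k$ is as large as $k$ — can always be deferred to a neighbouring phase or to the magnitude chain without extra cost. Once these compatibility and accounting points are pinned down, the remainder is the routine arithmetic of combining the two bounds with their ceilings.
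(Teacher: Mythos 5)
Your overall architecture is the same as the paper's: group the digits into blocks of $k$ so that the $\zeta_k$-phase corrections cost $0+1+\cdots+(k-1)=\tfrac{k(k-1)}{2}$ per block, bound the number of blocks by $\lceil(\lceil\log_2 n\rceil+1)/k\rceil$ using the binary expansion as the worst case for digit count, and charge the entire ``$k=1$'' magnitude-accumulation cost (the multiplier steps together with the digit values) to Zelinsky's switching-bases bound. Your synchronization worry is resolved in the paper exactly as you suggest resolving it: the phase bookkeeping depends only on the accumulator's phase, not on whether a given step multiplies by $2\zeta_k$ or $3\zeta_k$ (each base used has a single factor of $\zeta_k$ attached), so no desynchronization can occur, and the per-block allowance is unaffected.

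The one substantive divergence is that the paper does \emph{not} derive the term $\tfrac{5}{\ln 2+\frac12\ln 3}\ln n$; it is imported as a black box from Zelinsky's unpublished work \cite{Zelinksky2022}, after verifying $\lVert 2\zeta_k\rVert_k=2$, $\lVert 3\zeta_k\rVert_k=3$, and $\lVert 5\zeta_k\rVert_k=5$ (note the base $5\zeta_k$: Zelinsky's switching argument uses base $5$ as well, not just $2$ and $3$). Your in-line justification of the constant is the weak point of the proposal: a ``two doublings and a tripling'' super-step costs $2+2+3=7$ symbols in multipliers alone, so your claimed cost of $10$ per factor of $12$ must absorb the digit injections, and a naive worst case (digit $1$ at each doubling, digit $2$ at the tripling) gives $11$, not $10$; getting down to the stated constant is precisely the content of the switching argument (choosing which base to use so as to avoid expensive digits), which you would either have to reproduce or, as the paper does, cite. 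If you cite it, the rest of your accounting — including the $3k$-per-block saving over Proposition \ref{prop:cycloweakupper} and the treatment of phase-$0$ digits as plain digits in the Zelinsky layer — matches the paper's proof.
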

\begin{proof}

We attempt to generalize Zelinsky's `switching bases' method for higher $k$. This method still relies on expressing the number in a base representation, but instead of using only $2\zeta_k$ as a base, we switch bases when it is optimal. Let us use $2 \zeta_k, 3 \zeta_k, 5\zeta_k$. We claim that their complexities are 2, 3, and 5 respectively.

We can see that $||2 \zeta_k|| = 2$ and $||3 \zeta_k|| = 3$, which can be verified quickly since the only numbers of complexity 2 are $2\zeta_k$ and $\zeta_k^2$. For $5\zeta_k$, we can quickly check that the magnitude of anything with complexity 4 or less does not exceed 4.

In order to create an analogue to the natural numbers, we need to make sure that in the base expression, the remainder combines with the correct power of $\zeta_k$. Consider the base 2 representation of $n$. This expression will be the upper bound for the number of digits in the base $b$ expansion, thereby providing an upper bound for the number of times we need to account for different powers of $\zeta_k$ not simplifying into the same power of $\zeta_k$. This is analogous to how the powers of $x$ may not combine with the other addend in Equation \ref{eqn:expressionpowers}. There are $\lceil \log_2 n\rceil + 1$ digits total. Then, we partition the digits into groups of length $k$, where we group $a_1\zeta_k + a_2 \zeta_k^2 + \ldots + a_{k-1} \zeta_k^{k-1} + a_k$ together. We count the number of extra $\zeta_k$'s we will have to multiply to the remainder in order to add the coefficients, which is $\frac{k(k-1)}{2}$. This process is similar to the proof of Proposition \ref{prop:cycloweakupper}. Let us momentarily ignore the $2\zeta_k$ that we multiply at each step. Thus the upper bound for the number of extra $\zeta_k$'s needed is $\frac{k(k-1)}{2}\left\lceil\frac{\lceil \log_2 n \rceil + 1}{k}\right\rceil$.

Now to account for the complexity needed to do a bitshift in the multibase representation of $n$, we can simply consider the expression of $n$ as if it was in $k=1$, as we have accounted for extra powers of $\zeta_k$. Thus we add $\frac{5}{\ln(2) + \frac{1}{2}\ln(n)}\ln(n)$, as obtained by Zelinsky in his currently unpublished paper (used with permission) \cite{Zelinksky2022}.
\end{proof}

%
%
\begin{prop} \label{prop:comp1}
	$||1||_k = k$ for prime $k$.
\end{prop}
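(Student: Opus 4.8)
The plan is to prove the two matching bounds $||1||_k \le k$ and $||1||_k \ge k$ separately; only the second will use that $k$ is prime. The upper bound is the ``trivial'' one already recorded in the text: $1 = \zeta_k^k = \zeta_k \cdot \zeta_k \cdots \zeta_k$ ($k$ factors) is a representation using $k$ copies of $\zeta_k$, so $||1||_k \le k$, and this holds for every $k \ge 1$.

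For the lower bound I would pass from expressions to formal polynomials. Any expression $E$ in the symbol $x$ determines a polynomial $f_E(x) \in \mathbb{N}[x]$ with non-negative integer coefficients (only $+$ and $\cdot$ occur, so nothing cancels), and $f_E(\zeta_k)$ is exactly the value of the representation $E$. I would record two elementary facts about $f_E$, each a one-line induction on the way $E$ is built from its atoms: (i) $f_E(0) = 0$, i.e.\ $f_E$ has zero constant term, because $E$ contains no constants, only copies of $x$; and (ii) $\deg f_E \le$ (number of copies of $x$ in $E$), since $\deg x = 1$, degrees add under $\cdot$ and are at most additive under $+$, while the $x$-count is additive under both. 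In particular $\deg f_E \ge 1$ for every expression.

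Now let $E$ be an optimal representation of $1$ in base $\zeta_k$, with $m = ||1||_k$ copies of $\zeta_k$, and set $f = f_E$, $d = \deg f$, so $f(\zeta_k) = 1$, $f(0) = 0$ and $d \ge 1$. Primality enters here: for prime $k$ the minimal polynomial of $\zeta_k$ over $\mathbb{Q}$ is $\Phi_k(x) = 1 + x + \cdots + x^{k-1}$, monic of degree $k-1$, so $\Phi_k \mid (f-1)$ in $\mathbb{Z}[x]$. I would split on $d$. If $d \le k-2$, then $f-1$ is a polynomial of degree $d < \deg \Phi_k$ that is divisible by $\Phi_k$, forcing $f - 1 = 0$; but $f = 1$ has constant term $1 \ne 0$, contradicting (i). If $d = k-1$, then $f - 1 = c\,\Phi_k$ for a constant $c$; comparing leading coefficients forces $c \ge 1$ (the leading coefficient of $f$ is a positive integer), while comparing constant terms forces $c = f(0) - 1 = -1$, a contradiction. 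Hence $d \ge k$, and by (ii) $m \ge d \ge k$. Together with the upper bound this gives $||1||_k = k$.

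Once the setup is in place there is no single hard step; the care goes into the bookkeeping for facts (i) and (ii), and into noticing that it is precisely $f(0) = 0$ that rules out the $d = k-1$ case. It is worth flagging why primality cannot be dropped: for composite $k$ one has $\deg \Phi_k = \varphi(k) < k-1$, so polynomials of degree well below $k$ can satisfy $f(\zeta_k) = 1$, and the degree bound (ii) no longer forces $m \ge k$ — consistent with $||1||_k < k$ being possible for some composite $k$.
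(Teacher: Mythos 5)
Your proposal is correct and follows essentially the same route as the paper: pass from an optimal expression to a polynomial $f\in\mathbb{N}[x]$ with $f(0)=0$, use that $\Phi_k=1+x+\cdots+x^{k-1}$ is the minimal polynomial of $\zeta_k$ for prime $k$ so $\Phi_k\mid f-1$, and convert a degree bound into a complexity bound. The only difference is organizational — you case-split on $\deg f$ and compare leading/constant coefficients, whereas the paper analyzes the cofactor $h$ with $\Phi_k h = f-1$ — and your bookkeeping (the explicit facts $f(0)=0$ and $\deg f \le \#x$'s) is a slightly cleaner rendering of the same argument.
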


\begin{proof}
    Suppose we have an expression $f(x)$ such that $f(\zeta_k) = 1$. Then $\zeta_k$ is a root of the polynomial $f(x) - 1$. Since $f(x)$ is an expression, it has natural (a subset of the integers) coefficients and is divisible by $x$ (so the constant coefficient of $f(x)-1$ is $-1$). The minimal polynomial of an algebraic integer $\alpha$ divides all polynomials over $\mathbb{Z}$ with $\alpha$ as a root (\cite{Weisstein2002Aug}), so the minimal polynomial of $\zeta_k$ must divide $f(x) - 1$.
    
    Now consider when $k$ is prime. Then let $k=p$, where $p$ is prime. Then the minimal polynomial of $\zeta_p$ is the $p$-th cyclotomic polynomial, which is $x^{p-1} + x^{p-2} + \cdots + 1$, by \cite{Weisstein2002Apr}. Thus, for some polynomial $h \in \mathbb{Z}[x]$, we have:
    \[(x^{p-1} + x^{p-2} + \cdots + 1)h = f(x)-1.\]
    Then note that if $f$ has complexity at most $p$, then the largest degree of $f(x)$ is at most $p$, so $\deg h \le 1$. But the degree of $h$ can't be 0, because then $f(x)$ would equal $-x^{p-1} - x^{p-2} \cdots$, which isn't an expression \footnotemark. But, since the constant term must be $-1$ after the multiplication (and $f(x)$ has natural coefficients), $h$ has to be $ax-1$ for $a \in \mathbb{N}$. Thus $f(x) - 1 = x^p - 1 + (a-1)(x^{p-1} + \ldots) \implies f(x)=x^p + (a-1)(x^{p-1} + \ldots) $. But then, in order to have a degree of $p$, there has to be at least $p$ multiplications. Thus the complexity of $f$ is at least $p$, but the obvious expression has complexity $p$, so $||1||_k = k$.
\end{proof}

We have also found a second proof for this proposition, inspired by \cite{Lynn2022Sep}.
\begin{lemma}\label{lemma:coefEquality}
    Let $k$ be prime. Let $g$ be a generator in $\mathbb{Z}/k\mathbb{Z}$. Then if 
    
    \[S = a_0 + a_1\zeta_k^{g} + a_2\zeta_k^{g^2} + \ldots + a_{k-1} \zeta_k^{g^{k-1}} = a_0 + a_1\zeta_k^{g^2} + a_2\zeta_k^{g^3} + \ldots + a_{k-1} \zeta_k^{g^k},\]
    
    then $a_1 = a_2 = a_3 = \ldots = a_{k-1}$, for $a_i \in \mathbb{Z}.$
\end{lemma}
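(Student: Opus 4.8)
The plan is to exploit the fact that, since $k$ is prime and $g$ is a primitive root modulo $k$, the multiplicative order of $g$ is $k-1$; hence the exponents $g, g^2, \dots, g^{k-1}$ reduce modulo $k$ to a permutation of $1, 2, \dots, k-1$, and $g^k \equiv g \pmod{k}$, so $\zeta_k^{g^k} = \zeta_k^{g}$. Passing from the first sum to the second therefore shifts each coefficient $a_j$ (with $j \ge 1$) from the root $\zeta_k^{g^j}$ to the ``next'' root $\zeta_k^{g^{j+1}}$, cyclically, with $a_{k-1}$ wrapping back onto $\zeta_k^{g}$. Comparing the two expressions root by root is what will chain the $a_i$ together into a single equality.

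Before carrying that out I would record the linear-algebra input. The $k$-th cyclotomic polynomial $\Phi_k(x) = 1 + x + \dots + x^{k-1}$ is the minimal polynomial of $\zeta_k$ (as already used in Proposition \ref{prop:comp1}), so $\{1, \zeta_k, \dots, \zeta_k^{k-2}\}$ is a $\mathbb{Q}$-basis of $\mathbb{Q}(\zeta_k)$, and the relation $\zeta_k^{k-1} = -1 - \zeta_k - \dots - \zeta_k^{k-2}$ then shows that $\{\zeta_k, \zeta_k^{2}, \dots, \zeta_k^{k-1}\}$ is a $\mathbb{Q}$-basis as well; in particular this set is linearly independent over $\mathbb{Z}$. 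Since $g$ is a primitive root, $\{\zeta_k^{g}, \zeta_k^{g^2}, \dots, \zeta_k^{g^{k-1}}\}$ is the same collection of roots listed in a different order, hence is also $\mathbb{Z}$-linearly independent.

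Now I would subtract $a_0$ from both sides of the hypothesis and re-index the right-hand side: writing the second sum as $\sum_{j=1}^{k-1} a_j \zeta_k^{g^{j+1}}$ and using $g^k \equiv g \pmod{k}$, the $j = k-1$ term becomes $a_{k-1}\zeta_k^{g}$, so the equation takes the form
\[ \sum_{j=1}^{k-1} a_j\, \zeta_k^{g^{j}} \;=\; a_{k-1}\,\zeta_k^{g} \;+\; \sum_{j=2}^{k-1} a_{j-1}\, \zeta_k^{g^{j}}. \]
Equating coefficients against the linearly independent set $\{\zeta_k^{g}, \dots, \zeta_k^{g^{k-1}}\}$ gives $a_1 = a_{k-1}$ (the coefficient of $\zeta_k^{g}$) and $a_j = a_{j-1}$ for every $j = 2, \dots, k-1$; chaining these equalities yields $a_1 = a_2 = \dots = a_{k-1}$, as claimed.

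I expect the only real obstacle to be bookkeeping: keeping the cyclic relabelling of indices forced by $g^k \equiv g$ straight, and being slightly careful that what is actually needed is the linear independence of $\{\zeta_k, \dots, \zeta_k^{k-1}\}$ rather than the ``standard'' power basis $\{1, \zeta_k, \dots, \zeta_k^{k-2}\}$ — both follow at once from the cyclotomic relation. After that the argument is a direct coefficient comparison.
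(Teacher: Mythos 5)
Your proposal is correct and is essentially the paper's argument: the paper also reduces to a vanishing integer combination of $\zeta_k^{g}, \dots, \zeta_k^{g^{k-1}}$ whose cyclic consecutive differences must all be zero, using that $\Phi_k$ has degree $k-1$ (which is exactly your linear-independence statement for $\{\zeta_k,\dots,\zeta_k^{k-1}\}$). The only cosmetic difference is that you equate coefficients against this basis directly, while the paper subtracts the two sides, divides by $\zeta_k$, and derives a degree contradiction with $\Phi_k \mid g(x)$.
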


\begin{proof}
    First, move all terms of the equation to the same side. Then, noting that $\zeta_k^{g^k} = \zeta_k^{g}$ we have that 
    \[
    (a_1 - a_{k-1})\zeta_k^g + (a_2 - a1) \zeta_k^{g^2} + \ldots + (a_{k-1} - a_{k-2})\zeta^{g^{k-1}} = 0.
    \]
    Since $\{g, g^2, \cdots, g^{k-1}\} = \{1, 2, \cdots, k-1\}$, we can then reorder this equation to 
    
    \[b_1 + b_2 \zeta_k + b_3 \zeta_k^2 + \ldots + b_{k-2}\zeta_k^{k-2},\] 
    
    where each $b_i$ is some coefficient of a power of $\zeta_k$ from the equation prior (and note that each $b_i$ is a difference of 2 ``consecutive'' coefficients $a_j$ and $a_{j+1}$ where there is looping around) and from dividing both sides by $\zeta_k$. Thus $\zeta_k$ is a root of $g(x) = b_1 + b_2 x + b_3x^2 + \ldots + b_{k-2}x^{k-2}$.

    Then by \cite{Weisstein2002Aug}, $\phi_k$ divides $g(x)$. If $g \ne 0$, a contradiction arises as $\deg g < \deg \phi_k$. Thus $g=0$. Since the coefficients of $g$ are pairwise differences of ``consecutive'' coefficients of powers of $\zeta_k$ in $S$ and each coefficient of $g$ is 0 (as $g = 0$), all the coefficients $a_i$ for $k \ge i \ge 1$ are equal to each other.
\end{proof}

Now the alternate proof that $||1||_k = k$ for prime $k$ begins.

\begin{proof}
     Let $k$ be prime. Let $g$ be a generator in $\mathbb{Z}/k\mathbb{Z}$. Suppose $||1||_k < k$. Then there will be no $\zeta_k^k$'s in the expanded representation for $1$. Let the representation with all multiplication expanded be $a_{1} \zeta_k^g + a_{2} \zeta_k^{2} + \ldots + a_{k-1} \zeta_k^{g^{k-1}}$, noting that $\{g, g^2, \cdots, g^{k-1}\} = \{1, 2, \cdots, k-1\}.$
    
    Now consider the mapping $\zeta \rightarrow \zeta^g$ (which is a Galois automorphism, thus taking $1 \rightarrow 1$).
    
    Then 
    \[
    1 = a_1\zeta_k^{g} + a_{2}\zeta_k^{g^2} + \ldots + a_{k-1} \zeta_k^{g^{k-1}} = a_{1}\zeta_k^{g^2} + a_{2}\zeta_k^{g^3} + \ldots + a_{k-1} \zeta_k^{g^k}.
    \]
    
    By Lemma \ref{lemma:coefEquality}, $a_1 = a_2 = \cdots = a_{k-1}$. But since this is an expanded representation, all the coefficients are non-negative. If they are 0, then we get a contradiction $(1=0)$. If they were positive, then since $\zeta_k + \zeta_k^2 + \ldots + \zeta_k^{k-1} = -1$, we have have that $1 = -a_1 < 0$, a contradiction\footnotemark[\value{footnote}].
\end{proof}

\begin{remark}
    Using Galois automorphisms, we can trivially see that for an optimal representation of an integer, that representation is also optimal if we used other primitive roots of unity instead as the base. Furthermore, we can describe exactly what happens to the number represented if we switch the base to a different primitive root of unity.
\end{remark}

\footnotetext{
    Note the parallels between this proof and the other proof: they both used the fact that a representation of 1 (and in this case all rationals) using $\zeta_p$'s must have equal coefficients of the primitive root's powers.
    \label{footnote:galoisanalogues}
}

\begin{prop}\label{prop:2p, 1Comp}
    $||1||_k = k$ for $k = 2p$.
\end{prop}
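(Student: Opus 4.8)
The plan is to adapt the minimal-polynomial-divisibility argument from the first proof of Proposition~\ref{prop:comp1} (divisibility by the cyclotomic polynomial, followed by a degree count), but the larger gap between $\deg\phi_{2p}=p-1$ and $k=2p$ forces one extra idea. Throughout I would take $p$ to be an odd prime; the leftover case $k=4$ is settled by direct inspection (one checks that no expression with at most $3$ symbols evaluates to $1$ at $\zeta_4=i$). The upper bound is immediate, since $\zeta_{2p}^{2p}=1$ gives $||1||_{2p}\le 2p$, so it suffices to prove $||1||_{2p}\ge 2p$.

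Suppose, for contradiction, that $f$ is an expression with $f(\zeta_{2p})=1$ and $||f||=m<2p$. I would use three structural facts about expressions, exactly as in Proposition~\ref{prop:comp1}: (i) $f$ has non-negative integer coefficients; (ii) $f$ is divisible by $x$, so $f(x)-1$ has constant term $-1$; and (iii) $\deg f\le ||f||=m$, where equality $\deg f=||f||$ forces $f=x^{||f||}$ (a short induction on the expression tree: a top-level sum strictly drops the degree below the symbol count, while a top-level product forces equality in each factor). Since $\zeta_{2p}$ is a root of $f(x)-1\in\mathbb{Z}[x]$, its minimal polynomial $\phi_{2p}$ divides $f(x)-1$; write $f(x)-1=\phi_{2p}(x)\,h(x)$ with $h\in\mathbb{Z}[x]$. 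The facts I would exploit about $\phi_{2p}$ are: $\phi_{2p}(x)(x+1)=x^p+1$ (valid because $p$ is odd), $\deg\phi_{2p}=p-1$, and $\phi_{2p}(0)=1$. Comparing constant terms gives $h(0)=-1$, and comparing degrees gives $\deg h=\deg f-(p-1)\le m-(p-1)\le p$.

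The key step is to multiply through by $x+1$:
\[
(x+1)\,f(x)=(x+1)+(x^p+1)\,h(x).
\]
The left-hand side has non-negative coefficients, being a product of polynomials with non-negative coefficients. Now read off the coefficient of $x^p$ on the right-hand side. If $\deg h\le p-1$, then neither $x+1$ nor $h(x)$ reaches degree $p$, while $x^p h(x)$ contributes exactly its constant term $h(0)=-1$; hence the coefficient of $x^p$ equals $-1$, contradicting non-negativity. Therefore $\deg h=p$, which forces $\deg f=(p-1)+p=2p-1$; combined with $\deg f\le m<2p$ this gives $m=2p-1=\deg f$, so by (iii) $f=x^{2p-1}$ and $f(\zeta_{2p})=\zeta_{2p}^{2p-1}=\zeta_{2p}^{-1}\ne 1$, the final contradiction. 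Hence no such $f$ exists and $||1||_{2p}=2p$.

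The main obstacle — the place where this genuinely differs from the prime case — is exactly this degree bookkeeping. When $k=p$ is prime one has $\deg h\le 1$, which pins $h$ down immediately; here $\deg h$ can be anything from $0$ to $p$, so the naive approach stalls. What rescues it is the observation that $\phi_{2p}$ ``completes'' to $x^p+1$ after multiplication by $x+1$: this converts the non-negativity of $f$'s coefficients (a property preserved under multiplying by $x+1$) into the single clean constraint that the $x^p$-coefficient equals $h(0)=-1$, disposing of every case $0\le\deg h\le p-1$ at once, after which the only surviving case $\deg h=p$ is precisely the rigidity situation $\deg f=||f||$, which a base-$\zeta_{2p}$ representation of $1$ cannot meet.
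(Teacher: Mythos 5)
Your argument checks out: the identity $(x+1)\phi_{2p}(x)=x^{p}+1$ is valid for odd $p$, the constant-term and degree bookkeeping for $h$ is right, the coefficient of $x^{p}$ in $(x+1)f(x)=(x+1)+(x^{p}+1)h(x)$ is indeed $h(0)=-1$ whenever $\deg h\le p-1$, and your rigidity fact (degree at most the symbol count, with equality only for a pure power of $x$) is proved correctly by the induction you sketch, so the endgame $f=x^{2p-1}$, $\zeta_{2p}^{2p-1}\ne 1$ closes the contradiction; handling $k=4$ separately is appropriate since the $(x+1)$ trick needs $p$ odd. This is, however, a genuinely different middle step from the paper's. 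The paper gets the same degree bound $\deg(xf)\ge 2p-1$ by specializing Proposition \ref{prop:2primepower1expression} to $j=1$: the non-negativity of the coefficients of $xf=\phi_k(x)(xg'-1)+1$ produces a chain of inequalities whose combination gives $g_{p-1}\ge 1$ and, under the assumption $||1||_k<k$, $g_{p}=0$, after which the paper runs the same final argument you do (complexity at least $2p-1$, and the only candidate of degree $\ge k-1$ at complexity $k-1$ is $x^{k-1}$, which does not evaluate to $1$). Your version buys a short, self-contained proof: multiplying by $x+1$ collapses the paper's coefficient bookkeeping, which lives inside the general $k=2^{a}p^{b}$ machinery, into a single coefficient check, and you actually justify the rigidity claim the paper only asserts; the cost is that the trick is tailored to $k=2p$ and does not feed into the more general framework the paper reuses elsewhere (e.g.\ for Corollary \ref{corollary:1compPrimeBound}).
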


\begin{proof}
    Note that $j = 1$ in this case (building on the idea from the proof of \ref{prop:2primepower1expression}. Now suppose that $||1||_k < k$. Then by Proposition \ref{prop:2primepower1expression}, $g_{p-1} \ge 1$. Thus, since $g_{p} = 0$ by Proposition \ref{prop:2primepower1expression}, the degree of $xf$ (the expression for $1$ is at least $2p-1$ if we expand the multiplications). Thus there are at least $2p-1$ multiplications to get such a degree. Therefore $||1||_k \ge 2p-1=k-1$.
    
    Since $||1||_k \le k$, if $||1||_k < k$, $||1||_k = k -1$. But since the degree of the expression for 1 is at least $k-1$, the only element st the degree is at least $k-1$ and complexity of $k-1$ is $x^{k-1}$. This leads to a contradiction, as clearly $\zeta_k^{k-1} \ne 1$.
    
\end{proof}

\begin{theorem}\label{theorem:primepowers1comp}
    	$||1||_k = k$ for prime power $k$.
\end{theorem}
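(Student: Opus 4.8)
The plan is to imitate the first proof of Proposition~\ref{prop:comp1}, replacing its ad hoc analysis of the cofactor by the explicit sparse shape of the prime-power cyclotomic polynomial. Write $k = p^m$ with $p$ prime, and set $q = k/p = p^{m-1}$. Suppose toward a contradiction that $||1||_k < k$, and let $f$ be an optimal expression for $1$ with base $\zeta_k$; then $f \in \N[x]$, $x \mid f$, $f(\zeta_k)=1$, and the number of $x$'s in $f$ equals $||1||_k \le k-1$, so $\deg f \le k-1$ (the degree of an expression never exceeds the number of $x$'s it uses, as already invoked in Proposition~\ref{prop:comp1}). Exactly as there, $\zeta_k$ is a root of $f(x)-1 \in \mathbb{Z}[x]$, so its minimal polynomial $\Phi_k$ divides $f(x)-1$; write $f(x)-1 = \Phi_k(x)\,h(x)$ with $h \in \mathbb{Z}[x]$.

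Next I would feed in the identity $\Phi_{p^m}(x) = \Phi_p\bigl(x^{p^{m-1}}\bigr) = 1 + x^{q} + x^{2q} + \cdots + x^{(p-1)q}$, so $\deg \Phi_k = (p-1)q = k - q$ and $\Phi_k(0) = 1$. This gives two facts: first, $\deg h = \deg f - \deg \Phi_k \le (k-1) - (k-q) = q-1$; second, comparing constant terms, $f(x)-1$ has constant term $-1$ (since $x \mid f$) while the constant term of $\Phi_k h$ is $\Phi_k(0)\,h(0) = h(0)$, hence $h(0) = -1$.

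Now the contradiction. Since $\deg h < q$, the polynomials $x^{jq}h(x)$ for $j=0,1,\dots,p-1$ have pairwise disjoint supports: the exponents appearing in $x^{jq}h(x)$ all lie in $[\,jq,\ (j+1)q-1\,]$. Therefore in
\[
f(x) = 1 + \Phi_k(x)h(x) = 1 + \sum_{j=0}^{p-1} x^{jq} h(x)
\]
the coefficient of $x^{q}$ is exactly the coefficient of $x^{0}$ in $h$, namely $h(0) = -1$ (and this position is genuine, since $h \ne 0$ forces $\deg f \ge \deg\Phi_k = k-q \ge q$). This contradicts $f \in \N[x]$. Hence $||1||_k \ge k$, and since $||1||_k \le k\,||1|| = k$ by replacing the $1$ with $\zeta_k^{k}$, we conclude $||1||_k = k$, which is Theorem~\ref{theorem:primepowers1comp}.

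The only delicate point — and precisely where the prime-power hypothesis is used — is the disjoint-support observation: it works because $\Phi_{p^m}$ is the sparse polynomial $\sum_{j=0}^{p-1} x^{jq}$, so multiplying it by an $h$ of degree $<q$ cannot make the blocks interact, forcing the coefficient of $x^{q}$ to be the (negative) constant term of $h$. For composite $k$ the cyclotomic polynomial is not of this shape and the argument collapses, consistent with $||1||_k = k$ not being expected in general. Everything else (the bound $\deg f \le ||f||$, divisibility by $\Phi_k$, and the constant-term bookkeeping) is routine and mirrors Proposition~\ref{prop:comp1}.
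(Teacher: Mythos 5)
Your argument is correct, and it sits inside the same framework as the paper's proof (divisibility of the expanded expression minus $1$ by the cyclotomic polynomial, constant-term bookkeeping giving a $-1$, the bound $\deg f \le$ number of $x$'s, and the sparse shape $\phi_{p^m}(x)=1+x^{q}+\cdots+x^{(p-1)q}$ with $q=p^{m-1}$), but it reaches the contradiction by a genuinely different mechanism. The paper applies coefficient non-negativity at the exponent $p^{m-1}\cdot p$ to force the top coefficient of the cofactor to be at least $1$, then invokes the identity $x^{p^{m-1}p}-1=\prod_{a\mid p^{m-1}p}\phi_a$ to rewrite the expression as $x^{k}+xh\phi_k$ and concludes that the expanded expression must contain $\zeta_k^{k}$, contradicting the degree bound coming from complexity $<k$. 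You instead use the degree bound first, to cap the cofactor $h$ at degree $q-1$, and then exploit the disjoint supports of the blocks $x^{jq}h(x)$ to read off the coefficient of $x^{q}$ in $f$ as exactly $h(0)=-1$, contradicting non-negativity of the coefficients of an expression directly. Your route is shorter: it needs no product identity, no decomposition $g=xg'-1$, and no analysis of the leading coefficient; the one hypothesis doing the work (primality of the power) enters only through the block-disjointness, exactly as you flag. The paper's route, while longer, is the one its authors then push further in Proposition \ref{prop:2primepower1expression} to extract a system of coefficient inequalities for composite $k=2^ap^b$, something your single-coefficient contradiction does not immediately generalize to; so each formulation has its use. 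Minor cosmetic points only: the parenthetical ``this position is genuine'' is unnecessary (the coefficient of $x^{q}$ is well defined whatever $\deg f$ is, and your computation shows it is $-1$), and $h\in\mathbb{Z}[x]$ follows from monicity of $\phi_k$, though in fact even rational coefficients would suffice for your sign contradiction.
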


\begin{proof}
    Let $k = p^{m+1}$ for $p$ prime and $m \in \mathbb{N}_0 = \mathbb{N} \cup \{0\}$. Furthermore, let $\mathbb{N}_0[x]$ be the set of polynomials with coefficients in $\mathbb{N}_0$. As before, the optimal expression for $1$ with $\zeta_{k}$ as a base implies that $\zeta_{k}$ is a root of $x f - 1$, for $f \in \mathbb{Z}[x]$. Then, since $x f-1 \in \mathbb{Z}[x], \phi_{k} | x f-1$ (\cite{Weisstein2002Aug}), where $\phi_{k}$ is the $k$-th cyclotomic polynomial (which just as a remainder, $k=p^{m+1}$).
    
    Thus there exists $ g \in \mathbb{Q}[x]$ such that $\phi_{k} g = x f - 1$. It is well known that
    \[
    \phi_k = \phi_{p^{m+1}} = 1 + x^{p^m} + x^{2p^m} + \ldots + x^{(p-1)p^m};
    \]
    therefore, the constant term is $1$, so the constant term of $g$ is $-1$. Thusly we can write $g = x g' - 1$ for $ g' \in \mathbb{Q}[x]$. Then let $g' = g_0 + g_1 x + g_2 x^2 + \ldots, \, g_i \in\mathbb{Q}$. Note that $\deg g' \le p^{m}-1$, since otherwise, $\deg xf \ge p^{m+1}$, and then there would be at least $p^{m+1}$ multiplications, which has a complexity greater than or equal to $p^{m+1}$ (and thus no counter-example to the claim can have this condition). Thus 
    \begin{align*}
        \phi_{k} (xg' - 1) =& \, x\phi_{k}g' - \phi_{k} \\
        &= x(1 + x^{p^m} + x^{2p^m} + \ldots+ x^{(p-1)p^m})(g_0 + g_1 x + g_2 x^2 + \ldots) \\
        &+ -(1 + x^{p^m} + x^{2p^m} + \ldots + x^{(p-1)p^m}).
    \end{align*}
    
    Then, 
    \[x f = x(1 + x^{p^m} + x^{2p^m} + \ldots + x^{(p-1)p^{m}})(g_0 + g_1 x + g_2 x^2 + \ldots) - x^{p^m} - x^{2p^m} - \ldots.\] 
    Clearly, $x f \in \mathbb{N}_0 [x]$, since its an expression. Thus all the coefficients of the RHS should be in $\N_0$ once simplified. Thus the coefficient of $x^{p^m}$ in the first/left term has to be greater than or equal to 1 (to account for the first subtraction). We find that this coefficient is $g_{p^m - 1}$. 
    
    
    Thus $g' = x^{p^m-1} + h, h\in\mathbb{Q}[x], \deg h \le p^m-1$ and the coefficient of $x^{p^m-1}$ in $h$ is non-negative. Then
    \begin{align*}
        \phi_{k}(x g' -1) &= \phi_{k}(x^{p^m} - 1 + xh)\\
        &= \phi_{p^{m+1}}\left(\prod_{a | p^m} (\phi_{a}) + xh\right)
    \end{align*}
   (This last step uses a well-known property of cyclotomic polynomials, \cite{Weisstein2002Apr}). 
   \begin{align*}
       \phi_{k}\left(\prod_{a|p^m} (\phi_{a}) + xh\right) &= x^{p^{m+1}} - 1 + xh\phi_{p^{m+1}}\\
        &= x f - 1 \\
        &\iff x f = x^{p^{m+1}} + x h\phi_{p^{m+1}}
   \end{align*}
   Thus the expression for $1$ using $\zeta_{p^{m+1}}$'s has to have a $\zeta_{p^{m+1}}^{p^{m+1}}$ term in it (as the coefficient of $x^{p{m+1}}$ is $g_{p^m-1}$, and this is positive) after all multiplication is distributed. Then suppose there is an expression of complexity less than $p^{m+1}$ for 1. Then the highest power of $\zeta_{p^{m+1}}$ explicitly in the expression is obviously less than $p^{m+1}$. But the condition shown before contradicts this, since this expression won't explicitly have a $\zeta_{p^{m+1}}^{p^{m+1}}$ term in it.
\end{proof}

\begin{remark}
    Considering polynomials not just over integers but over the cyclotomic rings might be fruitful in expanding the technique above.
\end{remark}

\begin{remark}
    It may be possible to generalize this technique to the other powers of $\zeta_k$ by considering the expression for $\zeta_k^d$ for $1 \le d < k$ to be $xf$, and $\zeta_k$ to be a root of $xf - x^d$ instead of subtracting $n$.
\end{remark}

\begin{prop}\label{prop:2primepower1expression}
    If $||1||_k < k$, then 
    \begin{align}
         g_{pj + j -1} &= 0, \label{prop2primepowerCondition1}\\
       g_{j-1}  &\ge 0, \label{prop2primepowerCondition2}\\
         g_{wj - 1} &\in \mathbb{N}_0 && \forall w \in\mathbb{N}, w \le p-1, \text{ and} \label{prop2primepowerCondition3}\\
         g_{pj-1} &\in \mathbb{N}, \label{prop2primepowerCondition4}
    \end{align}
    where $xg' - 1 = \frac{xf-1}{\phi_k}$, $g_a$ is the coefficient of $x^a$ in $g'$, $k = 2^a p^b$ for natural $a,b$, and $j = 2^{a-1} p^{b-1}$ and $g'$ satisfies this system of inequalities:
    \begin{equation}\label{prop2primepowercoefcondition}
    \begin{split}
        g_{j-1} + 1 &\ge 0 \\
        g_{2j - 1} - g_{j - 1} - 1 &\ge 0\\
        g_{3j - 1} - g_{2j - 1} + g_{j-1} + 1&\ge 0\\
        \vdots \\
        g_{(p-1)j} - g_{(p-2)j} + \cdots -g_{j-1} - 1 &\ge 0\\
        g_{pj - 1} - g_{(p-1)j} + \cdots + g_{j-1} &\ge 0\\
        - g_{pj - 1} + \cdots + g_{2j - 1} &\ge 0
    \end{split}
    \end{equation}
\end{prop}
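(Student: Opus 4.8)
The plan is to follow the template of Theorem~\ref{theorem:primepowers1comp}, replacing $\phi_{p^{m+1}}$ by the cyclotomic polynomial of $k=2^{a}p^{b}$ (here $p$ is the odd prime, so $k$ has two distinct prime factors), and then to read all the asserted conditions off a single polynomial identity. Suppose $||1||_k<k$ and let $xf$ be an optimal expression for $1$ in base $\zeta_k$, where $f\in\mathbb{N}_{0}[x]$ — an expression is divisible by $x$, so this normal form is available. Then $\zeta_k$ is a root of $xf-1\in\mathbb{Z}[x]$, so $\phi_k\mid xf-1$; since $\phi_k$ is monic, $xf-1=\phi_k g$ with $g\in\mathbb{Z}[x]$. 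Because $k$ is not a prime power, $\phi_k(0)=1$, hence $g(0)=-1$, and we may write $g=xg'-1$ with $g'=\sum_{a}g_ax^{a}\in\mathbb{Z}[x]$; this is the $g'$ of the statement, and substituting back gives
\[
xf \;=\; x\phi_k g' - \phi_k + 1 .
\]

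Next I would make $\phi_k$ explicit. Using $\phi_{2^{a}p^{b}}(x)=\phi_{2p}\!\left(x^{j}\right)$ with $j=2^{a-1}p^{b-1}$ and $\phi_{2p}(y)=1-y+y^{2}-\cdots+y^{p-1}=\sum_{i=0}^{p-1}(-1)^{i}y^{i}$, we get $\phi_k=\sum_{i=0}^{p-1}(-1)^{i}x^{ij}$ and, after the constant terms cancel, $-\phi_k+1=\sum_{i=1}^{p-1}(-1)^{i+1}x^{ij}=x^{j}-x^{2j}+\cdots-x^{(p-1)j}$. Two pieces of information then drop out of the identity above. First, a degree count: as in Theorem~\ref{theorem:primepowers1comp}, an expression has complexity at least its degree, so $||1||_k<k$ forces $\deg(xf)\le k-1$; the only term on the right that can reach large degree is $x\phi_k g'$, of degree $1+(p-1)j+\deg g'$ with leading coefficient that of $g'$, and since $k=2pj$ this yields $\deg g'\le (p+1)j-2$, i.e.\ $g_{pj+j-1}=0$, which is \eqref{prop2primepowerCondition1}. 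Second, positivity: $xf$ is an expression, so all its coefficients lie in $\mathbb{N}_{0}$; extracting the coefficient of $x^{wj}$ on the right-hand side gives, with the convention $g_m=0$ for $m<0$ or $m>\deg g'$,
\[
c_{w} \;:=\; \sum_{i=0}^{p-1}(-1)^{i}\,g_{(w-i)j-1}\;+\;\begin{cases}(-1)^{w+1}, & 1\le w\le p-1,\\[2pt] 0, & \text{otherwise},\end{cases}
\qquad c_{w}\ge 0 .
\]
Writing out $c_{1},c_{2},\dots,c_{p+1}$ explicitly — and using \eqref{prop2primepowerCondition1} to delete the $g_{pj+j-1}$ term from $c_{p+1}$ — reproduces exactly the system \eqref{prop2primepowercoefcondition}.

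Finally, \eqref{prop2primepowerCondition2}--\eqref{prop2primepowerCondition4} come out of \eqref{prop2primepowercoefcondition} by telescoping: adding two consecutive rows $c_{w-1}\ge 0$ and $c_{w}\ge 0$ cancels every $g$-term except the top one, giving $g_{wj-1}=c_{w}+c_{w-1}\ge 0$ for $2\le w\le p-1$ (this is \eqref{prop2primepowerCondition3}); for $w=p$ the leftover constant survives, so $g_{pj-1}=c_{p}+c_{p-1}+1\ge 1$, which is \eqref{prop2primepowerCondition4}; and row $c_{1}\ge 0$ controls $g_{j-1}$, giving \eqref{prop2primepowerCondition2}. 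I expect the main obstacle to be purely bookkeeping: keeping the alternating signs in $\phi_{2p}(x^{j})$ and in $-\phi_k+1$ consistent, correctly identifying which $g_{a}$ the degree bound forces to vanish (so that $c_{p+1}$ really is the last nontrivial row), and checking that the telescoping signs line up with those printed in \eqref{prop2primepowercoefcondition}. The one genuinely delicate spot is the boundary case $j=1$, i.e.\ $k=2p$, where $g_{0}$ gets pinned down separately and the $w=1$ row behaves differently; this is exactly the regime isolated in Proposition~\ref{prop:2p, 1Comp}, which then uses only \eqref{prop2primepowerCondition1} and \eqref{prop2primepowerCondition4} from this proposition.
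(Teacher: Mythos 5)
Your route is essentially the paper's: divide $xf-1$ by $\phi_k$, write the quotient as $xg'-1$, use $\phi_k=\phi_{2p}(x^{j})=1-x^{j}+x^{2j}-\cdots+x^{(p-1)j}$, read the system \eqref{prop2primepowercoefcondition} off the nonnegativity of the coefficients of $x^{j},\dots,x^{(p+1)j}$ in $xf=x\phi_k g'-\phi_k+1$, and telescope adjacent rows. Two of your shortcuts are clean and correct: $g'\in\mathbb{Z}[x]$ follows at once from $\phi_k$ being monic (the paper works in $\mathbb{Q}[x]$ and recovers integrality later by induction along the rows), and \eqref{prop2primepowerCondition1} follows directly from $\deg(xf)\le k-1$ under the hypothesis $||1||_k<k$ (the paper only uses $\deg(xf)\le k$ and must combine $g_{pj+j-1}\not\ge 1$, obtained from its earlier sufficiency computation, with nonnegativity and integrality of the top coefficient). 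Your degree count is sound because the leading term of $x\phi_k g'$ cannot cancel against $-\phi_k+1$ unless $g'=0$, which is impossible since $xf$ would then have a negative coefficient. Your formula for $c_w$, the identification of the printed system after deleting $g_{pj+j-1}$ from $c_{p+1}$, and $g_{pj-1}=c_{p-1}+c_p+1\ge 1$ for \eqref{prop2primepowerCondition4} all check out.

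There is, however, one genuine gap: your derivation of \eqref{prop2primepowerCondition2}. The row $c_1\ge 0$ reads $g_{j-1}+1\ge 0$, which gives only $g_{j-1}\ge -1$, not $g_{j-1}\ge 0$; correspondingly your telescoping establishes \eqref{prop2primepowerCondition3} only for $2\le w\le p-1$, while the statement's range includes $w=1$. The repair stays inside your framework but uses the \emph{last} two rows rather than the first: since $c_p=\sum_{i=0}^{p-1}(-1)^{i}g_{(p-i)j-1}$ and $c_{p+1}=\sum_{i=0}^{p-1}(-1)^{i}g_{(p+1-i)j-1}$ carry no constant, every middle term cancels in their sum and $c_p+c_{p+1}=g_{pj+j-1}+g_{j-1}=g_{j-1}$ by \eqref{prop2primepowerCondition1}; as $c_p$ and $c_{p+1}$ are coefficients of the expression $xf$ they lie in $\mathbb{N}_0$, so $g_{j-1}\in\mathbb{N}_0$, which yields \eqref{prop2primepowerCondition2} and the missing $w=1$ case of \eqref{prop2primepowerCondition3}. (The paper's own write-up likewise records only $g_{j-1}\ge -1$ and sums adjacent rows ``up to the third to last,'' so this last-two-rows step is exactly what is needed to justify the statement as printed.)
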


\begin{proof}
    The following work is from a (failed) attempt to generalize the previous proof to $k$ a power of 2 times an odd prime power and a salvage to get the proposition above. The key step in the last proof was to find a condition necessary for $g \in \mathbb{Q}[x]$, one that would guarantee that $\phi_k g = x^k-1$. So if 
    \begin{align*}
        (x g' - 1)\phi_{k} &= x f - 1\\
        &= x^{k} - 1,
    \end{align*}
    then 
    \begin{align*}
        x g' - 1 &= \frac{x^{k} - 1}{\phi_{k}} \\
        &= \prod_{\substack{c \mid k \\ c \ne k}} \phi_c \\
        &=\prod_{d' | p^b} (\phi_{d'})\prod_{d | p^b} (\phi_{2d})\prod_{d | p^b} (\phi_{4d})\ldots \prod_{d | p^{b-1}} (\phi_{2^{a}d})\\
    \end{align*}
    
    First we consider when $d = 1$ in each of the products above. We can turn $\phi_2(x)$ to $-\phi_1(-x)$. Thus the product turns into
    \[
    \prod_{d' | p^b} (\phi_{d'})\left(-\phi_1(-x)\prod_{\substack{d | p^b\\ d \ne 1}} (\phi_{2d})\right) \left(-\phi_1(-x)\prod_{\substack{d | p^b\\ d \ne 1}} (\phi_{4d})\right)\ldots \left(-\phi_1(-x)\prod_{\substack{d | p^{b-1}\\ d \ne 1}} (\phi_{2^{a-1} d})\right).
    \]
    Since the remaining $d > 2$, we can apply the identity that $\phi_{2\ell}(x) = \phi_{\ell}(x^2), \ell \in\mathbb{N}$ \cite{Weisstein2002Apr} repeatedly to reduce all powers of 2 times $d$ to $2d$, resulting in
    \[
    \prod_{d' | p^b} (\phi_{d'})\left(-\phi_1(-x)\prod_{\substack{d | p^b\\ d \ne 1}} (\phi_{2d})\right)\left(-\phi_1(-x)\prod_{\substack{d | p^b\\ d \ne 1}} (\phi_{2d}(x^2))\right) \ldots \left(-\phi_1(-x)\prod_{\substack{d | p^{b-1}\\ d \ne 1}} (\phi_{2d}(x^{2^{a-1}}))\right).
    \]
    Finally, we can convert $\phi_{2d}$ to $\phi_d(-x)$ to get
    \[
    \left( \prod_{d' | p^b} (\phi_{d'}) \right) \left( (-1)\prod_{d | p^b} (\phi_{d}(-x)) \right) \left( (-1)\prod_{d | p^b} (\phi_{d}(-x^2)) \right) \ldots \left( (-1)\prod_{d | p^{b-1}} (\phi_{d}(-x^{2^{a-1}})) \right).
    \]
    
    Then, noting that $\prod_{d|p^b} \phi_d(-x^{2^{i}}) = (-x^{2^{i}})^{p^b} - 1$ by the product identity, we can substitute and obtain:
    \[
    xg' - 1 = (x^{p^b}-1)(x^{p^b}+1)((x^2)^{p^b}+1) \ldots ((x^{2^{a-1}})^{p^{b-1}} + 1).
    \] 
    
    Then using the difference of squares factorization (and letting $j = 2^{a-1}p^{b-1}$), we have
    \begin{align*}
        (x^{2^{a-1} p^b} - 1)(x^{2^{a-1}p^{b-1}} + 1) &= (x^{pj} - 1)(x^{j} + 1)\\
        &= x^{pj + j} + x^{pj} - x^{j} - 1
    \end{align*}
    Hence we want 
    \[
    xg' - 1 = x^{pj + j} + x^{pj} - x^{j} - 1 \iff g' = x^{pj + j - 1} + x^{pj - 1} - x^{j - 1}, x\ne 0 \text{ (trivially not the case)}.
    \]
    Thus the condition that we want for $g'$ is the coefficients of $x^{pj + j - 1}, x^{pj-1}$ in $g'$ are greater than or equal to $1$ and the coefficient of $x^{j-1}$ is greater than or equal to $-1$. This is sufficient to show that the optimal expression for $1$ is $x^{k}-1$ since if, for $h \in \mathbb{Q}[x]$
    \[
    g' = x^{pj + j -1} + x^{pj - 1} - x^{j-1} + h,
    \] 
    then 
    \[
        xg'-1 = x^{pj + j} + x^{pj} - x^j - 1 + hx.
    \]
    Hence
    \begin{align*}
        \phi_k(x)(xg' -1) &= (x^{pj + j} + x^{pj} - x^j - 1 + hx)\phi_k\\
        &= x^k - 1 + hx\phi_k.
    \end{align*}
    Thusly $x f = x^k + hx\phi_k$. Therefore $\zeta_k^k$ is in the representation of $1$ after all the multiplication is expanded. Thus at least $k$ multiplications are required, forcing the complexity of 1 to be greater than or equal to $k$. Since $\zeta_k^k = 1$, the complexity is in fact $k$. Thus this condition is all we need to show.
    
    Now to show this condition is true on the coefficients. As before, we know that 
    \[
    x f \in \mathbb{N}_0[x] \implies \phi_k(x)(xg' - 1) + 1 \in \mathbb{N}_0[x].
    \] 
    Thus $\phi_k(x)xg' - \phi_k(x)$ have coefficients in $\mathbb{N}_0$. Let $g' = g_0 + g_1x + \ldots$ as before. Since $k = 2pj$,
    \begin{align*}
        \phi_k &= \phi_{p^b} (-x^{2^{a-1}}) \\\
        &= \phi_p (-x^{j}) \\
        &= 1 -x^{j} + x^{2 \cdot j} -x^{3 \cdot j} + \ldots
    \end{align*}
    
     Since we subtract $\phi_k$, the coefficient of $x^{j}$ in $\phi_k(x) x g'$ has to be greater than or equal to $-1$, one of the conditions we need. Thus $a_{j-1} \ge -1$.
    
    Next distribute the $g'$ across $x\phi_k$ to obtain 
    \[
    g' x \phi_k = xg' -x^{j + 1}g' + x^{2j + 1}g' -x^{3j + 1}g' + \ldots.
    \] 
    Let $n = 1 + \deg g'$. Since the optimal expression for $1$ is going to have a degree less than or equal to $k =2pj$ (using the obvious expression), the highest possible degree of $\phi_k(x)(xg' - 1)$ is $(p-1)j + n$, which is less than or equal to $ 2pj$. 
    
    Thus $n \le pj + j$.
%
    
    Now consider finding the coefficients of $x^j, x^{2j}, x^{3j}, \ldots, x^{(p+1)j}$ in $\phi_k(x)(xg'-1) - \phi_k$. Since these coefficients are in $\mathbb{N}_0$, we get the following system of inequalities:
    \begin{align*}
        g_{j-1} + 1 &\ge 0\\
        g_{2j - 1} - g_{j - 1} - 1 &\ge 0\\
        g_{3j - 1} - g_{2j - 1} + g_{j-1} + 1&\ge 0\\
        \vdots \\
        g_{(p-1)j} - g_{(p-2)j} + \cdots -g_{j-1} - 1 &\ge 0\\
        g_{pj - 1} - g_{(p-1)j} + \cdots + g_{j-1} &\ge 0\\
        g_{pj+j - 1} - g_{pj - 1} + \cdots + g_{2j - 1} &\ge 0
    \end{align*}
    This gives us conditions \ref{prop2primepowercoefcondition}.
    Adding the third to last and second to last inequalities, we can see that $g_{pj-1} \ge 1$. We can also note that adding two adjacent inequalities up to the third to last line shows that for all $w \in \mathbb{N}$, $w \le p-1$, $g_{wj-1} \ge 0$.
    
    Now, if the expression for $1$ has a degree less than $k$, then by the contrapositive, $g_{pj + j - 1} \not\ge 1$. Then note that the LHS of each inequality above is equal to the coefficient of $x^j, \ldots$. Thus they must be integers, as they correspond to a coefficient in an expression. This gives us Equation \ref{prop2primepowerCondition3}. Since $g_{pj-1} \ge 1$, this gives us Equation \ref{prop2primepowerCondition4}. Then we can use induction to conclude that $g_{wj-1} \in \mathbb{Z} \, \forall w \in\mathbb{N}, w \le p +1$. Then, $g_{pj+j-1}$ is non-negative since it equals a coefficient of an expression (namely the highest degree one), and it is not greater than or equal to 1, so $g_{pj+j-1} = 0$. This gives us Equation \ref{prop2primepowerCondition1}.
\end{proof}

Note that this proposition provides a sufficient condition for generating expressions for $1$: we can simply find $g_{j-1}, g_{2j-1}, g_{3j-1}, \ldots, g_{(p-1)j - 1}$ with the conditions in Proposition \ref{prop:2primepower1expression} and let the other coefficients be 0, and we get that $\phi_k(x) \cdot (xg' - 1)$ corresponds to an expression for 1. For instance, if $k=6$, we get this system of inequalities:
\begin{align*}
    g_0 + 1 \ge 0\\
    g_1 - g_0 - 1 \ge 0\\
    g_2 - g_1 + g_0 \ge 0\\
    g_3 - g_2 + g_1 \ge 0
\end{align*}

Then if $||1||_k < k$, we get this:
\begin{align*}
    g_0 + 1 \ge 0\\
    g_1 - g_0 - 1 \ge 0\\
    g_2 - g_1 + g_0 \ge 0\\
    - g_2 + g_1 \ge 0
\end{align*}

Then we can see that $g_0, g_1, g_2 = 0, 1, 1$, among other solutions, satisfies the system. This corresponds to $xf = x + x^5$. This is an expression for $1$ with $k=6$.

For another example, consider $k = 50$. We get this:

\begin{align*}
    \vdots\\
    g_{19} - g_{14} + g_9 - g_4 - 1 &\ge 0\\
    g_{24} - g_{19} + g_{14} - g_{9} + g_4 &\ge 0\\
    - g_{24} + g_{19} - g_{14} + g_9 &\ge 0
\end{align*}

Then note that if $g_{9}, g_{14}, g_{19}, g_{24} = 1$ and the rest are 0, we get that $xf = x^{5} + x^{15} + x^{35} + x^{45}$. This has a complexity less than or equal to $47 < 50$, since it can be written as $x^4(x + x^{10}(x+x^{20}(x+x^{11})))$.

\begin{remark}
    In the optimal expression for $n$, we can get a polynomial $g'$ as above, and with some algebra we can get that $g' = x^{q^{b-1}-1}$ for $k=q^b$.
    
    As before, if we have an expression $\zeta_k f(\zeta_k) = n$ for $f \in \mathbb{N}_0[x]$, then $\phi_k \mid x f - n$. Thus $\exists g \in\mathbb{Q}[x]$ such that $\phi_k(x) (x g - n) = x f - n$. Then we note that $\phi_{q^b} = \sum_{0 \le a \le p-1} x^{aq^{b-1}}$, and let $g = \sum_{i = 0} g_i x^i$. Then from distributing and equating to $x f$, we can see that $n + x\phi_k(x) g - n\phi_k \in \mathbb{N}_0[x]$. Substituting (and doing a little cancellation), we get
    \[
    x\left( \sum_{0 \le a \le p-1} x^{aq^{b-1}} \right) \sum_{i = 0} g_i x^i - n\sum_{1 \le a \le p-1} x^{aq^{b-1}}.
    \]
    Doing more distributing, we get
    \[
    \sum_{0 \le a \le p-1} \sum_{i = 0} g_i x^{1 + aq^{b-1} + i} - n\sum_{1 \le a \le p-1} x^{aq^{b-1}}.
    \]
    Then the coefficient of $x^{q^{b-1}}$ in the first addend has to be greater than or equal to $n$ for the polynomial to be in $\mathbb{N}_0[x]$. Thus $g_{q^{b-1}-1} \ge n$.
\end{remark}

\begin{corollary}
    The above remark fairly immediately implies that $||n||_{q^{b}} \ge q^b$ based on degree analysis.
\end{corollary}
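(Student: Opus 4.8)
The plan is to turn the coefficient bound from the preceding remark into a lower bound on the \emph{degree} of the polynomial underlying an optimal expression, and then to note that the degree of that polynomial can never exceed the number of copies of $x$ it uses. First I would fix a positive integer $n$ and take an optimal expression $E$ for $n$ with base $\zeta_{q^b}$, so that $E$ uses exactly $||n||_{q^b}$ copies of $x$. Since every expression has zero constant term and has all coefficients in $\mathbb{N}_0$, we may write $E(x) = x f(x)$ with $f \in \mathbb{N}_0[x]$, exactly as in the remark. A short induction on the structure of an expression --- using $\deg(A+B) = \max(\deg A, \deg B)$ and $\deg(AB) = \deg A + \deg B$, with no degree‑lowering cancellation possible because all coefficients are nonnegative --- shows that $\deg(xf) \le ||n||_{q^b}$.

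Next I would invoke the remark verbatim: since $E(\zeta_{q^b}) = n$ and $xf - n \in \mathbb{Z}[x]$, the minimal polynomial $\phi_{q^b}$ of $\zeta_{q^b}$ divides $xf - n$, so $xf - n = \phi_{q^b}(x)\,(xg(x) - n)$ for some $g \in \mathbb{Q}[x]$, and the remark's computation gives that the coefficient $g_{q^{b-1}-1}$ of $x^{q^{b-1}-1}$ in $g$ satisfies $g_{q^{b-1}-1} \ge n \ge 1$. In particular $g$ is nonzero and $\deg g \ge q^{b-1} - 1$, so $xg$ has degree $1 + \deg g \ge q^{b-1} \ge 1$ and hence $\deg(xg - n) = 1 + \deg g \ge q^{b-1}$ (the linear term dominates the constant $-n$ regardless of the sign of the leading coefficient of $g$).

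The degree analysis then closes everything. Working in $\mathbb{Q}[x]$, where degrees add on products, and recalling $\deg \phi_{q^b} = q^b - q^{b-1}$,
\[
\deg(xf - n) = \deg \phi_{q^b} + \deg(xg - n) \ge (q^b - q^{b-1}) + q^{b-1} = q^b .
\]
Since $q^b \ge 1 > 0 = \deg n$, we get $\deg(xf) = \deg(xf - n) \ge q^b$, and combining with the first paragraph yields $||n||_{q^b} \ge \deg(xf) \ge q^b$, as claimed.

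I do not expect a genuine obstacle --- this really is the ``degree analysis'' the corollary advertises --- but the two points deserving care are (i) the clean justification of $\deg(xf) \le ||n||_{q^b}$, where the nonnegativity of the coefficients of an expression is exactly what forbids cancellation in the top degree and is therefore essential, and (ii) recording the hypothesis $n \ge 1$, which is precisely what upgrades the remark's inequality $g_{q^{b-1}-1} \ge n$ into the nonvanishing statement $\deg g \ge q^{b-1}-1$ that the degree count consumes.
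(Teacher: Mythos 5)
Your proposal is correct and is essentially the argument the paper intends: the corollary is left unproved beyond the phrase ``degree analysis,'' and your write-up supplies exactly that analysis --- $g_{q^{b-1}-1} \ge n \ge 1$ forces $\deg(xg-n) \ge q^{b-1}$, hence $\deg(xf) \ge \deg\phi_{q^b} + q^{b-1} = q^b$, combined with the standard bound $\deg(xf) \le ||n||_{q^b}$. (Minor note: the non-cancellation remark is not actually needed for the inequality $\deg(A+B) \le \max(\deg A,\deg B)$, but this does not affect correctness.)
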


\begin{remark}
    By applying a similar technique to $k=1$ of keeping track of the coefficients and forcing them to be in the naturals, we can easily see that (although note that $nx = n$ in this case, there is no subtraction, and a sum of $x + x^2 + \ldots + x^n = n$ in this case as well, forcing the degree of $xf$ to be $\le n$).
    \[
    n \ge g_{0} \ge g_1 \ge g_2 \ldots g_n \ge 0.
    \]
\end{remark}


\begin{prop}\label{conj:unitcomplexity}
    We originally had the conjecture that for all $a, k \in \mathbb{N}, a \le k$,
    \[
    ||\zeta_k^a||_k = a.
    \]
    But this turns out to be false.
\end{prop}
Though this seems to be correct intuitively, there seems to be increasingly more counterexamples as $k$ increases in the number of distinct prime factors (since cyclotomic polynomials underline the structure of the expression and having more factors makes them messier). A counterexample is $||1||_{12} \leq 11$ if we consider $(\zeta_{12}^9 + \zeta_{12}) \zeta_{12} = 1$.


\begin{prop}\label{prop:half1comp}
    $||\zeta_k^a||_k = a$ for $a \le \frac{k}{2} + 1$.
\end{prop}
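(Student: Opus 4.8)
The plan is to prove the inequalities $\|\zeta_k^a\|_k \le a$ and $\|\zeta_k^a\|_k \ge a$ separately. The upper bound is immediate: the expression consisting of $a$ copies of $\zeta_k$ multiplied together evaluates to $\zeta_k^a$, so $\|\zeta_k^a\|_k \le a$. For the lower bound I would argue by contradiction. Suppose some expression uses fewer than $a$ copies of the symbol and evaluates to $\zeta_k^a$; let $f \in \mathbb{Z}[x]$ be the polynomial obtained from that expression by writing $x$ in place of $\zeta_k$, so $f(\zeta_k) = \zeta_k^a$. First I would peel off the moduli $k = 1$ (where only $a = 1$ lies in range) and $k = 2$ (where only $a \in \{1, 2\}$ does): $\|\zeta_1\|_1 = \|\zeta_2\|_2 = 1$ trivially, and $\|\zeta_2^2\|_2 = \|1\|_2 = 2$ by Theorem~\ref{theorem:primepowers1comp}. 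So from now on $k \ge 3$, and hence $a \le \frac{k}{2} + 1 < k$.

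Two elementary facts about expressions, each by a short induction on how an expression is assembled from the symbol using $+$ and $\cdot$, carry the argument. First, $f$ has coefficients in $\mathbb{Z}_{\ge 0}$ and zero constant term. Second, $\deg f$ is at most the number of symbols in the expression, so here $\deg f \le a - 1$. Therefore $f(x) = b_1 x + b_2 x^2 + \cdots + b_{a-1} x^{a-1}$ for some $b_j \in \mathbb{Z}_{\ge 0}$, not all zero (since $f(\zeta_k) = \zeta_k^a \neq 0$), and evaluating gives $\zeta_k^a = \sum_{j=1}^{a-1} b_j \zeta_k^j$: thus $\zeta_k^a$ is a nonzero non-negative real combination of the unit vectors $\zeta_k, \zeta_k^2, \ldots, \zeta_k^{a-1}$.

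Here is where the hypothesis $a \le \frac{k}{2} + 1$ does its work. Every exponent present satisfies $j \le a - 1 \le \frac{k}{2}$, so each $\zeta_k^j$ occurring has argument $\frac{2\pi j}{k}$ lying in the interval $I = \left[ \frac{2\pi}{k}, \frac{2\pi(a-1)}{k} \right]$, whose length $\frac{2\pi(a-2)}{k}$ is strictly less than $\pi$ because $a - 2 < \frac{k}{2}$. An angular sector of opening less than $\pi$, together with the origin, is a convex cone --- a one-line check after rotating it to be symmetric about the positive real axis --- hence it is closed under non-negative combinations of the vectors lying in it. So $f(\zeta_k)$ has argument in $I$. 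But $\arg(\zeta_k^a) = \frac{2\pi a}{k}$ strictly exceeds the right endpoint $\frac{2\pi(a-1)}{k}$ of $I$, and since $a < k$ it is still less than $2\pi$; so it is not in $I$ and no reduction modulo $2\pi$ can place it there. This contradiction proves $\|\zeta_k^a\|_k \ge a$.

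The step I expect to require the most care is exactly the one where $\frac{k}{2} + 1$ is used: one must check that this bound is precisely what keeps the sector spanned by $\zeta_k, \ldots, \zeta_k^{a-1}$ pointed (opening below $\pi$), and that $\arg(\zeta_k^a)$ genuinely escapes that sector instead of wrapping around into it --- which is why the small moduli $k \le 2$ must be handled first and why I would keep close track of the inequality $a < k$. By comparison, the two structural inductions on expressions and the convex-cone estimate are routine, and this route has the pleasant feature of avoiding the cyclotomic-polynomial bookkeeping used in the preceding propositions.
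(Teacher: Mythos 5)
Your proof is correct, but it takes a genuinely different route from the paper's. The paper argues by induction on the complexity $b\le a$, maintaining the invariant that the element of complexity $b$ with the largest argument is $\zeta_k^b$; a purported cheaper expression for $\zeta_k^{a}$ is then analyzed through its final operation (addition is ruled out by an angle-averaging argument, and multiplication leads to a contradiction because arguments add while the complexities of the two factors sum to $a$). You instead collapse the whole expression at once: the expanded polynomial $f$ has nonnegative integer coefficients, no constant term, and degree at most the symbol count $a-1$, so $\zeta_k^a$ would be a nonnegative combination of $\zeta_k,\dots,\zeta_k^{a-1}$, which is impossible because these span a convex sector-cone of opening less than $\pi$ (this is exactly where $a\le \frac{k}{2}+1$ enters) that $\arg(\zeta_k^a)=\frac{2\pi a}{k}<2\pi$ escapes. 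Your version is more airtight: it avoids the paper's delicate claims that the last operation of a minimal expression must be a multiplication and that its subexpressions are optimal, it makes the role of the hypothesis $a\le\frac{k}{2}+1$ explicit, and it handles the wrap-around issue for $k\le 2$ (via Theorem \ref{theorem:primepowers1comp}) that the paper glosses over. What the paper's induction buys is the intermediate invariant itself --- that no element of complexity $b\le\frac{k}{2}+1$ has argument exceeding $\frac{2\pi b}{k}$ --- which is what is actually invoked later (in the remark following Proposition \ref{prop:half1comp} and in the computation of $||0||_k$ for even $k$); note, though, that your sector argument yields that same invariant with no extra work, since any expression with at most $b$ symbols evaluates to a nonnegative combination of $\zeta_k,\dots,\zeta_k^{b}$ and hence lies in the cone with arguments in $\left[\frac{2\pi}{k},\frac{2\pi b}{k}\right]$.
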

    
\begin{proof}
	We will use induction. First we assume that \(\forall b \in \mathbb{N}, b \le a \) for a fixed natural \(a \) with our condition, the vector in the complex plane with the largest angle and complexity \(b \) is $e^{2\pi i b/ k}$. We can see that for \(a=1 \), this is true.

	Then, we can show this is true for \(a+1 \) through contradiction. Suppose that there was a vector with an angle greater than $\frac{2 \pi i (a+1)}{k}$ with complexity $a+1$, call it $\alpha$. Then, the final operation performed must be a multiplication, since adding two vectors results in a vector with an angle between the addend's angles and the angle between the two vectors is less than $\pi$ radians, as we can't get a vector with an angle less than $\frac{2\pi i}{k}$ (multiplication can only increase the angle; angle addition can also only produce a vector with an angle larger than the smallest angle of these addends).
	
	Thus let $\alpha = \beta \gamma$. Since multiplication adds angles, $\arg \beta + \arg \gamma > \frac{2\pi (a+1)}{k}$. Let $b = ||\beta||_k$ and $c = ||\gamma||_k$. Note that $b+c = a+1$ since $\alpha = \beta \gamma$ in its optimal expression. Thus the largest angles that $\beta$ and $\gamma$ can have are $\frac{2\pi b}{k}$ and $\frac{2\pi c}{k}$ respectively. This leads to a contradiction as $\arg \alpha = \arg \beta + \arg \gamma = \frac{2\pi (b+c)}{k}$ and $b+c = a + 1$.
\end{proof}

\begin{remark}
    We can also get $a = \lfloor \frac{k}{2} \rfloor + 2$ by considering that two vectors must once again add to have an angle of $\frac{2\pi i a}{k}$, but there are no vectors with complexity less than $a$ with an angle of $\pi$ between it and $\zeta_k$ going counter-clockwise from $\zeta_k$.
\end{remark}

\begin{prop}\label{prop:0comp,evenk}
    If $k$ is even, $||0||_k = \frac{k}{2} + 2$.
\end{prop}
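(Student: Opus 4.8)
The plan is to establish the two inequalities $||0||_k \le k/2+2$ and $||0||_k \ge k/2+2$ separately. For the upper bound I would exploit that $k$ even forces $\zeta_k^{k/2} = -1$, hence $\zeta_k^{k/2+1} = -\zeta_k$ and
\[
0 = \zeta_k^{k/2+1} + \zeta_k .
\]
By Proposition \ref{prop:half1comp} (applicable since $k/2+1 \le k/2+1$) we have $||\zeta_k^{k/2+1}||_k = k/2+1$, and appending ``$+\,\zeta_k$'' to an optimal expression for $\zeta_k^{k/2+1}$ uses one additional $\zeta_k$, so $||0||_k \le k/2+2$. (For $k=2$ this is simply $0 = \zeta_2^2 + \zeta_2$.)

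For the lower bound I would argue by contradiction: assume $||0||_k \le k/2+1$ and fix an optimal expression using $m \le k/2+1$ copies of $\zeta_k$. First, the outermost operation cannot be a multiplication: if $0 = uv$ then, since $\mathbb{Z}[\zeta_k] \subseteq \mathbb{C}$ is an integral domain, one factor is $0$, and the corresponding sub-expression represents $0$ with fewer than $m$ copies of $\zeta_k$, contradicting optimality. So the outermost operation is an addition, $0 = u + v$, where $u$ and $v$ are values of sub-expressions each using at least one $\zeta_k$; hence $||u||_k, ||v||_k \le m-1 \le k/2$. Now I would invoke Proposition \ref{prop:half1comp} together with the angle bookkeeping in its proof: any element of complexity $b$ with $1 \le b \le k/2$ is nonzero and has argument in $[2\pi/k,\, 2\pi b/k] \subseteq [2\pi/k,\, \pi]$. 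Thus $u$ and $v$ are nonzero vectors lying in a common closed angular sector of width $\pi - 2\pi/k < \pi$, hence in a common open half-plane, so $u+v \ne 0$ --- a contradiction. Combining the two bounds gives $||0||_k = k/2+2$. (The case $k=2$, where the sector degenerates to the negative real axis, is immediate: two negative reals cannot sum to $0$.)

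The main obstacle is making the angular argument in the lower bound fully rigorous --- in particular, verifying that the arguments of all complexity-$\le k/2$ elements genuinely stay inside an arc of width strictly less than $\pi$ (so their sum cannot vanish) and that no wrap-around past $2\pi$ occurs under repeated multiplication. Both are precisely the phenomena controlled by the induction in the proof of Proposition \ref{prop:half1comp}, which is valid for $b \le k/2+1$ and so covers the range $b \le k/2$ needed here with room to spare; everything else in the argument is routine.
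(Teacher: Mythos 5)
Your proof is correct and takes essentially the same route as the paper: the upper bound via the expression $0=\zeta_k^{k/2+1}+\zeta_k$ priced by Proposition \ref{prop:half1comp}, and the lower bound by ruling out a final multiplication (integral domain) and then using the angle bounds behind Proposition \ref{prop:half1comp} to show two sub-expressions of complexity at most $k/2$ cannot cancel. Your half-plane formulation is just the contrapositive of the paper's statement that some addend must lie in the lower half-plane and hence cost at least $k/2+1$, and if anything it treats the real-axis edge case a bit more carefully.
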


\begin{proof}
    Clearly, the last operation in the optimal representation for $0$ isn't multiplication. Thus there must be at least two vectors that add up to 0 in the representation of 0. Clearly, these vectors aren't all in quadrant 1 or 2 (or on the $x$-axis, unless $k=2$). Thus there is a vector in quadrant 3 or 4. By Proposition \ref{prop:half1comp}, the complexity of such a vector is at least $\frac{k}{2}+1$. The least complex vector in the first or second quadrant has a complexity of 1. Thus $\zeta_k + -\zeta_k$ is the least complex representation for $0$ at a complexity of $k$, which has a complexity of $\frac{k}{2}+2$.
\end{proof}

\begin{prop} \label{prop:classificationOf1Comp}
For even $k$, let $p$ be the smallest odd prime factor of $k$. Then
\[
||1||_k \leq k-\frac{k}{2p} + p-2
\]
and if $k > 2p^2 - 4p$,
\[
||1||_k \leq k-\frac{k}{2p} + p-2 < k
\]
\end{prop}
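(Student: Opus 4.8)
The plan is to produce an explicit expression for $1$ using $\zeta_k$'s whose cost matches the claimed bound, and then to verify the strict inequality by a calculus argument on the quantity $\tfrac{k}{2p} - p + 2$. The construction will imitate the idea already used in the $k=6$ and $k=50$ examples following Proposition \ref{prop:2primepower1expression}: build $1$ as $\zeta_k \cdot f(\zeta_k)$ where $xf$ is a short sum of powers of $x$ that collapses, via Horner's method, into a representation cheaper than the trivial $x^k$.

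First I would set up the target representation. Write $k = 2p \cdot m$ where $m = k/(2p)$; note $m$ is an integer since $p \mid k$ and $2 \mid k$ with $p$ odd (I should check the case $4 \mid k$ separately, but the argument is the same with $m = k/(2p)$ possibly even). Following the template of the $k=50$ example, where we had $xf = x^5 + x^{15} + x^{35} + x^{45}$ for $k = 50$, $p = 5$, $m = 5$, the natural guess is
\[
xf = \sum_{\substack{0 \le a \le 2p-1 \\ a \text{ odd}}} x^{a m} = x^{m} + x^{3m} + x^{5m} + \cdots + x^{(2p-1)m}.
\]
Evaluated at $x = \zeta_k$, this is $\zeta_k^m(1 + \zeta_k^{2m} + \zeta_k^{4m} + \cdots + \zeta_k^{(2p-2)m})$; since $\zeta_k^{2m}$ is a primitive $p$-th root of unity, the geometric-type sum $1 + \omega + \cdots + \omega^{p-1}$ with $\omega = \zeta_k^{2m}$ vanishes — wait, that gives $0$, not $1$. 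So I would instead mimic the $k=50$ computation more carefully: there the exponents were $5, 15, 35, 45$, i.e. $\{1,3,7,9\}\cdot 5$, which are the residues $\pm 1, \pm 3 \pmod{10}$ scaled by $5$ — this is $\{a m : a \in (\mathbb{Z}/2p)^\times,\ a \not\equiv 0\}$ restricted to representatives making $\sum \zeta_k^{am} = \mu(p)\cdot(\text{something})$. The cleaner route: take $xf = x^m \cdot \Phi_{2p}(x^?)$-flavored sum so that $\zeta_k f(\zeta_k)$ telescopes to $-\Phi$-type minus a correction equal to $1$; concretely I will verify that the exponent set coming from $x g' - 1 = (x^{pm}-1)(x^m+1)/(\text{stuff})$ in Proposition \ref{prop:2primepower1expression}'s derivation yields $xf$ supported on $\{m, 3m, \dots\}$ with alternating structure, and that $\zeta_k f(\zeta_k) = 1$ by the cyclotomic identity $\prod_{d \mid n} \Phi_d(x) = x^n - 1$.

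Next I would count the complexity of this $xf$ via Horner/nested evaluation. The expression $x^m + x^{3m} + x^{5m} + \cdots + x^{(2p-1)m}$ factors as $x^{m-1}\cdot x \cdot (1 + x^{2m}(1 + x^{2m}(\cdots)))$ with $p$ nested blocks; producing $x^{2m}$ costs about $2m-1$ ones once (then reused), each of the $p-1$ additions of the inner Horner step costs $1$, the leading $x^{m-1}$ (or $x^m$) costs $m-1$ more, and the outermost multiplication by $\zeta_k$ costs $1$. Tallying these — one block of $2m - 1$, then $(p-1)$ increments, then the tail $x^{m}$ worth $m$, plus a constant — should come out to $2m - 1 + (p-1) + (m) + \cdots$; I need this to simplify to $k - \tfrac{k}{2p} + p - 2 = 2pm - m + p - 2$. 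Since $2pm - m = m(2p-1)$ and my Horner count has the $x^{2m}$ block reused $p$ times implicitly inside the nesting, the arithmetic $m + (p-1)\cdot(\text{cost to shift by }2m)$ must be reconciled: this bookkeeping — making the "reuse the computed power $\zeta_k^{2m}$ across all $p$ Horner stages" actually legal under the definition of an expression (it is, since we may write the nested form literally) and getting the constant exactly right — is the step I expect to be the main obstacle, exactly as the off-by-small-constant issues plagued Propositions \ref{prop:cycloweakupper} and the theorem after it.

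Finally, for the strict inequality: the claim is $k - \tfrac{k}{2p} + p - 2 < k$, i.e. $p - 2 < \tfrac{k}{2p}$, i.e. $k > 2p(p-2) = 2p^2 - 4p$, which is precisely the stated hypothesis. So this half is immediate algebra once the bound itself is established; I would just state the rearrangement $p - 2 < \tfrac{k}{2p} \iff 2p^2 - 4p < k$ and conclude. The only subtlety is that $p \ge 3$ (it is an odd prime), so $p - 2 \ge 1 > 0$ and the trivial bound $\|1\|_k \le k$ is genuinely improved; and for $p = 3$ the threshold is $k > 6$, so e.g. $k = 12$ is covered, consistent with the counterexample $\|1\|_{12} \le 11$ mentioned earlier (here $12 - \tfrac{12}{6} + 3 - 2 = 11$, matching exactly — a good sanity check on the constant in the Horner count).
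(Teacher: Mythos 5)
Your plan is the same as the paper's (an explicit short expression for $1$ supported on odd multiples of $m=\frac{k}{2p}$, costed by Horner, plus the trivial rearrangement for the strict inequality), but as written it has two genuine gaps. First, you never actually pin down the expression: your explicit guess $\sum_{a\ \mathrm{odd}} x^{am}$ includes $a=p$ and, as you yourself note, evaluates to $0$; the subsequent ``$\Phi_{2p}$-flavored'' discussion is a placeholder, not a verification. The missing one-line fact is that the odd powers $\zeta_{2p}^a$ ($a=1,3,\dots,2p-1$) are exactly the roots of $x^p+1$, whose sum is $0$ (the $x^{p-1}$ coefficient vanishes since $p\ge 3$), so omitting the single term $\zeta_{2p}^p=-1$ leaves $\zeta_{2p}+\zeta_{2p}^3+\cdots+\zeta_{2p}^{2p-1}-\zeta_{2p}^p=1$; translating $\zeta_{2p}=\zeta_k^{m}$ gives the sum $\sum_{a\ \mathrm{odd},\,a\neq p}\zeta_k^{am}=1$, which is exactly the $k=12$ and $k=50$ pattern you observed.

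Second, your cost accounting is both unfinished (you flag it as ``the main obstacle'') and, as sketched, unsound: you charge for $x^{2m}$ ``once (then reused),'' but in this complexity model an expression is a formula, not a circuit — every multiplication by $x^{2m}$ inside the nested Horner form costs $2m$ symbols each time, so your tally $2m-1+(p-1)+m+\cdots$ would badly undercount and could not simplify to the claimed bound. The clean way to close this, and what the paper does, is to apply Proposition \ref{prop:degreeHorner} (equivalently Lemma \ref{lem:deg}) to the polynomial $P(x)=\sum_{a\ \mathrm{odd},\,a\neq p} x^{am}$: its degree is $(2p-1)m=k-m$ and its coefficient sum is $p-1$, so its (modified) complexity is at most $(k-m)+(p-1)-1=k-\frac{k}{2p}+p-2$. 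No reuse argument is needed, and the irregular gap of $4m$ at the skipped exponent $pm$ causes no trouble. Your final paragraph (the equivalence $k-\frac{k}{2p}+p-2<k\iff k>2p^2-4p$, and the $k=12$ sanity check) is correct and matches the paper.
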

\begin{proof} 
    Since $k$ is even, let $2m = \frac{k}{p}$ for $m \in\N$. The following is then true:
    \[ 
    \zeta_{2p} + \zeta_{2p}^3 + \dots + \zeta_{2p}^{2p-1} = 0.
    \]
    Then, to get 1, we can simply leave out $\zeta_{2p}^p = -1$. 
    \[
    \zeta_{2p} + \zeta_{2p}^3 + \dots + \zeta_{2p}^{2p-1} - \zeta_{2p}^{p} = 1.
    \]
    Note that this is an expression. If we were to translate this to base $k = (2p)m$, we would have 
    \[
    \zeta_k^{m} + \zeta_k^{3m} + \dots + \zeta_k^{k-m} - \zeta_k^{mp} = 1
    \]
    By Horner's algorithm, the complexity of this is at most $k - m + p -2$ (follows from Proposition \ref{prop:degreeHorner}):. Then if $||1||_{k} < k$,
    \begin{align*}
        k - \frac{k}{2p} + p - 2 &< k\\
        \frac{k}{2p} &> p-2\\
        k &> 2p^2 - 4p.
    \end{align*}
    Since Horner's algorithm is an upper bound, we also obtain
    \[
     ||1||_k \leq k - \frac{k}{2p} + p - 2.
    \]
\end{proof}

\begin{corollary}\label{corollary:reverse1Comp}
    If $k = 2^a p^b$ for $a \ge 1, b > 1$, then 
    $k-\frac{k}{2p} + p -2 = 2^a p^b - 2^{a-1} p^{b-1} + p - 2$. 
    Since $a \ge 1,b > 1, \, p-2 - 2^{a-1}p^{b-1} < 0 \implies k-\frac{k}{2p} + p - 2 < k \implies ||1||_k < k$.
\end{corollary}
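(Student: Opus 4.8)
The plan is to derive this directly from Proposition \ref{prop:classificationOf1Comp}. First I would observe that writing $k = 2^a p^b$ with $p$ an odd prime and $a \ge 1$, $b > 1$ forces $k$ to be even and forces $p$ to be \emph{the} smallest odd prime factor of $k$, since the only odd prime dividing $k$ is $p$ itself. Hence the hypotheses of Proposition \ref{prop:classificationOf1Comp} are met with this particular $p$, and that proposition yields $||1||_k \le k - \frac{k}{2p} + p - 2$.

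Next I would simplify the right-hand side. Since $\frac{k}{2p} = \frac{2^a p^b}{2p} = 2^{a-1}p^{b-1}$, the bound becomes $||1||_k \le 2^a p^b - 2^{a-1}p^{b-1} + p - 2 = k + \bigl(p - 2 - 2^{a-1}p^{b-1}\bigr)$. It then suffices to check that the parenthesized correction term is strictly negative, i.e.\ that $2^{a-1}p^{b-1} > p - 2$. This is exactly where the hypotheses $a \ge 1$ and $b > 1$ enter: $a \ge 1$ gives $2^{a-1} \ge 1$, and $b > 1$ gives $b-1 \ge 1$, so $2^{a-1}p^{b-1} \ge p^{b-1} \ge p > p - 2$. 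Therefore $k - \frac{k}{2p} + p - 2 < k$, and combining this with the bound from Proposition \ref{prop:classificationOf1Comp} gives $||1||_k < k$, as claimed.

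As an alternative bookkeeping route, one could instead invoke the explicit threshold $k > 2p^2 - 4p$ appearing in Proposition \ref{prop:classificationOf1Comp}: here $k = 2^a p^b \ge 2p^2 > 2p^2 - 4p$, so the threshold is automatically cleared and the same conclusion follows. Both routes are immediate once the earlier proposition is in hand.

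I do not expect any genuine obstacle — the statement is essentially a substitution into Proposition \ref{prop:classificationOf1Comp}. The only points requiring a moment's care are (i) verifying that $p$ is really the smallest odd prime factor of $k$, so the earlier proposition applies verbatim, and (ii) keeping the arithmetic identity $\frac{k}{2p} = 2^{a-1}p^{b-1}$ straight when rewriting the bound.
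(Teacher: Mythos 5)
Your proposal is correct and follows essentially the same route as the paper: the corollary is just the bound of Proposition \ref{prop:classificationOf1Comp} specialized to $k = 2^a p^b$, with the substitution $\frac{k}{2p} = 2^{a-1}p^{b-1}$ and the observation that $p - 2 - 2^{a-1}p^{b-1} < 0$ when $a \ge 1$, $b > 1$. Your extra checks (that $p$ is indeed the smallest odd prime factor, and the alternative verification via $k \ge 2p^2 > 2p^2 - 4p$) are sound and only make the argument more explicit than the paper's inline version.
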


\begin{remark}
    Interestingly, for even \textit{k}, when we represent $\zeta_k^{m} + \zeta_k^{3m} + \dots + \zeta_k^{k-m} - \zeta_k^{mp} = 1$ using Horner's method, the highest power of $\zeta_k$ used in the expression is $\zeta_k^{2m}$, which according to Proposition \ref{prop:half1comp} indeed has a complexity of $2m$ (as $2m = \frac{k}{p} \le \frac{k}{2}$).
\end{remark}

\begin{corollary} \label{corollary:1compPrimeBound}
    If $k = 2^a p^b$ for an odd prime $p$, $a,b \in\N$, and $||1||_k = k$, then $p > 2^{a-2}$.
\end{corollary}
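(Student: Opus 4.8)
The plan is to combine the two results immediately preceding this corollary. Corollary~\ref{corollary:reverse1Comp} handles the case $b > 1$, and Proposition~\ref{prop:classificationOf1Comp} forces a size constraint on $k$ in the remaining case $b = 1$. Note first that since $a, b \in \N$ we have $a \ge 1$ and $b \ge 1$, so $k = 2^a p^b$ is even and $p$ is its unique (hence smallest) odd prime factor; thus both of those results are applicable.

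First I would dispose of the case $b \ge 2$: then $k = 2^a p^b$ with $a \ge 1$ and $b > 1$, so Corollary~\ref{corollary:reverse1Comp} gives $||1||_k < k$, contradicting the hypothesis $||1||_k = k$. Hence $b = 1$, i.e.\ $k = 2^a p$. Now apply Proposition~\ref{prop:classificationOf1Comp} to this $k$ with smallest odd prime factor $p$: it asserts that $k > 2p^2 - 4p$ would force $||1||_k < k$, so the assumption $||1||_k = k$ requires $k \le 2p^2 - 4p$. Substituting $k = 2^a p$ and dividing through by $2p$ gives $2^{a-1} \le p - 2$, hence $p \ge 2^{a-1} + 2 > 2^{a-2}$, which is the desired conclusion (with a little room to spare).

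I do not expect a genuine obstacle here: the corollary is short bookkeeping on top of the two results just established. The only points requiring care are verifying that $p$ really is the smallest odd prime factor of $k$ so that Proposition~\ref{prop:classificationOf1Comp} may legitimately be invoked, and invoking the convention $0 \notin \N$ to guarantee $a, b \ge 1$ (so that $k$ is even and $p$ genuinely occurs in the factorization, which is what makes the statement meaningful). If $a = 0$ were permitted, $k$ would be an odd prime power, $||1||_k = k$ by Theorem~\ref{theorem:primepowers1comp}, and the bound $p > 2^{a-2} = 2^{-2}$ would hold vacuously.
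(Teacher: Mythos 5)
Your proof is correct and takes essentially the same route as the paper: both arguments are bookkeeping on top of Proposition \ref{prop:classificationOf1Comp} and Corollary \ref{corollary:reverse1Comp}, with the only difference being that you eliminate $b \ge 2$ via the corollary up front, whereas the paper first applies the proposition, does the algebra, and only then invokes the corollary to rule out $b = 2$. Your ordering is if anything slightly cleaner and yields the marginally sharper bound $p \ge 2^{a-1} + 2$.
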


\begin{proof}
    If $k = 2^a p^b$, then when $||1||_k = k$, $k = 2^a p^b \le 2p^2 - 4p$ by Proposition \ref{prop:classificationOf1Comp}. Therefore $2^{a-1} p^{b-1} \le p - 2$. Thus $p(2^{a-1} p^{b-2} - 1) \le -2$, and then $2^{a-1} p^{b-2} - 1 \le \frac{-2}{p} \le 0 \implies 2^{a-1} p^{b-2} \le 1$. Therefore $2^{a-2} \le p^{2-b}$. If $b = 2$, then for Corollary \ref{corollary:reverse1Comp} applies (since $a\in\N$), and we get a contradiction. Thus $b\ne 2$. Thus $b=1$. Thus we have that $2^{a-2} \le p$.
\end{proof}

\begin{conj}\label{conj:classificationof1comp}
    If $k=2^a p^b$, $||1||_k = k \iff a = 1, b = 1$ or $a=0$.
\end{conj}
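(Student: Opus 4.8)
The plan is to split into the two implications, and to reduce the substantive direction to a single family of $k$. The ``$\Leftarrow$'' direction needs nothing new: if $a = 0$ then $k = p^b$ is an odd prime power and $||1||_k = k$ by Theorem \ref{theorem:primepowers1comp}, and if $a = b = 1$ then $k = 2p$ and $||1||_k = k$ by Proposition \ref{prop:2p, 1Comp}.

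For ``$\Rightarrow$'' I would prove the contrapositive: assuming $a \ge 1$ and $(a,b) \ne (1,1)$, show $||1||_k < k$. If $b \ge 2$ this is exactly Corollary \ref{corollary:reverse1Comp}, so the only remaining case is $b = 1,\ a \ge 2$, i.e. $k = 2^a p$. Here I would first record a reduction: since $\zeta_{2^{a-1}p} = \zeta_{2^a p}^2$, replacing every symbol in an expression for $1$ over $\mathbb{Z}[\zeta_{2^{a-1}p}]$ by the product $\zeta_{2^a p}\cdot\zeta_{2^a p}$ produces an expression for $1$ over $\mathbb{Z}[\zeta_{2^a p}]$ of exactly twice the length, so $||1||_{2^a p} \le 2\,||1||_{2^{a-1}p}$; in particular $||1||_{2^{a-1}p} < 2^{a-1}p$ forces $||1||_{2^a p} < 2^a p$. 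Iterating, it suffices to establish
\[
||1||_{4p} < 4p \qquad \text{for every prime } p.
\]
For $p = 3$ this is the known expression $(\zeta_{12}^9 + \zeta_{12})\zeta_{12} = 1$, giving $||1||_{12} \le 11$. So the whole statement comes down to the single claim that $||1||_{4p} < 4p$ for every prime $p \ge 5$. The earlier results do not reach this: for $k = 4p$ the bound of Proposition \ref{prop:classificationOf1Comp} is $k - \tfrac{k}{2p} + p - 2 = 5p - 4$, which exceeds $4p$, and that proposition only improves on $k$ once $k > 2p^2 - 4p$, which fails precisely for $k = 4p$; Corollary \ref{corollary:1compPrimeBound} likewise only narrows the gap without closing it.

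The main obstacle is therefore to construct, for each $p \ge 5$, an expression for $1$ over $\mathbb{Z}[\zeta_{4p}]$ that uses fewer than $4p$ symbols. The tools at hand are the subrings $\mathbb{Z}[i]$ and $\mathbb{Z}[\zeta_p]$ inside $\mathbb{Z}[\zeta_{4p}]$ (via $i = \zeta_{4p}^{p}$ and $\zeta_p = \zeta_{4p}^{4}$) and the $\phi_p$-identity $\zeta_{4p}^{2} + \zeta_{4p}^{6} + \cdots + \zeta_{4p}^{4p-2} - \zeta_{4p}^{2p} = 1$ from Proposition \ref{prop:classificationOf1Comp}. The difficulty is that every obvious construction seems to cost at least $4p$: naively lifting the optimal expressions $\zeta_p^{p}$ or $i^4$ to base $\zeta_{4p}$ costs exactly $4p$; a Horner evaluation of the $\phi_p$-identity, after one factors out the common power of $\zeta_{4p}$, always leaves a ``$\zeta_{4p}^{0} = 1$'' subterm, so it reduces to building $1$ (or $-1 = \zeta_{4p}^{2p}$) cheaply, which is exactly the problem; and the device that works for $k = 12$, that $\zeta_{12}^{9} + \zeta_{12}$ is itself a root of unity, has no analogue, since a sum $\zeta_{4p}^{a} + \zeta_{4p}^{b}$ is a root of unity only when $a - b \equiv \pm\, 4p/3 \pmod{4p}$, and $4p/3 \notin \mathbb{Z}$ for $p \ne 3$.

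I would attack the claim from two sides at once. First, hunt for a cleverer identity — for example writing $1$ as a product $(\zeta_{4p}^{a} + \zeta_{4p}^{b})(\zeta_{4p}^{c} + \zeta_{4p}^{d})$ whose expansion is the $\phi_p$-sum with $a,b,c,d$ chosen as small as possible, or combining a cheap rotation by a power of $i$ with a cheap partial sum of powers of $\zeta_p$; small-case computation in $\mathbb{Z}[\zeta_{20}]$ and $\mathbb{Z}[\zeta_{28}]$ should quickly reveal whether anything beats $4p$. Second, in case no such construction exists, try instead to prove $||1||_{4p} = 4p$ for $p \ge 5$ — which would refute Conjecture \ref{conj:classificationof1comp} as stated — by using the classification of vanishing sums of $4p$-th roots of unity (all generated by the two-term relation coming from the prime $2$ and the $p$-term relation coming from the prime $p$) to argue that any expression evaluating to $1$ must contain a subexpression of cost at least $4p$. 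I expect essentially all of the work to lie in the first of these, and I would not be surprised if the truth is in fact the second.
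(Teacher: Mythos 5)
You have not proved the statement, and to be fair, neither does the paper: this is stated there as a conjecture, with the remark immediately after it that ``only the case in which $b=1$ and $a\ne 1$'' is unestablished. Your assembly of the known pieces is correct and matches the paper's intent: the ``$\Leftarrow$'' direction follows from Theorem \ref{theorem:primepowers1comp} and Proposition \ref{prop:2p, 1Comp}, and the case $b\ge 2$ of the contrapositive is exactly Corollary \ref{corollary:reverse1Comp}. Your doubling reduction is a genuine addition not in the paper and is sound: since $\zeta_{2^{a-1}p}=\zeta_{2^a p}^2$, replacing each symbol by a product of two symbols gives $||1||_{2^a p}\le 2\,||1||_{2^{a-1}p}$, so $||1||_{2^{a-1}p}<2^{a-1}p$ forces $||1||_{2^a p}<2^a p$, reducing the whole problem to the single family $k=4p$ (with $k=12$ already settled by the paper's explicit expression).

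The genuine gap is the one you name yourself: the base case $||1||_{4p}<4p$ for primes $p\ge 5$ is not established, and no tool in the paper reaches it — your accounting of why Proposition \ref{prop:classificationOf1Comp} gives only $5p-4>4p$ and why Corollary \ref{corollary:1compPrimeBound} imposes no constraint at $a=2$ is accurate. Worse for the intended direction, this missing case is in direct tension with the paper's own Conjecture \ref{conj:whenisoneone}: for $p\ge 5$ one has $4p\le 2p^2-4p$, so that conjecture predicts $||1||_{4p}=4p$, which would refute the forward implication of Conjecture \ref{conj:classificationof1comp} (and the paper's closing remarks, citing Corollary \ref{corollary:1compPrimeBound}, also entertain $||1||_{2^a p^b}=k$ in this range). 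So your suspicion that the truth may lie with $||1||_{4p}=4p$ is well founded; settling $k=20$ or $k=28$ computationally, as you propose, is the right next step, but as it stands the proposal resolves nothing beyond what the paper already establishes — it only sharpens the open case from ``$k=2^a p$, $a\ge 2$'' to ``$k=4p$, $p\ge 5$.''
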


Notice that the above conjecture has only the case in which $b=1$ and $a\ne 1$ as unestablished as to what the complexity of 1 is.

\begin{conj} \label{conj:onebestwecando}
    For even $k > 2p^2 - 4p$ where $p$ is the smallest odd prime divisor of $k$,
    \[
    ||1||_k = k - \frac{k}{2p} + p - 2.
    \]
\end{conj}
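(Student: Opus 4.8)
The plan is to establish the matching lower bound $||1||_k \ge k - \frac{k}{2p} + p - 2$, since Proposition \ref{prop:classificationOf1Comp} already gives the upper bound. I would follow the structure of the proofs of Theorem \ref{theorem:primepowers1comp} and Proposition \ref{prop:2primepower1expression}: an expression $\zeta_k f(\zeta_k) = 1$ forces $\phi_k \mid xf - 1$, so $xf - 1 = \phi_k \cdot (xg - 1)$ for some $g \in \mathbb{Q}[x]$ (using that the constant term of $\phi_k$ is $1$ whenever $k$ is not a prime power, and more carefully when it is — but here $k$ is even with an odd prime factor so $\phi_k(0)=1$). The goal is to show that any such $f$ with all coefficients in $\mathbb{N}_0$ must have $\deg(xf) \ge k - \frac{k}{2p} + p - 2$, i.e. that the highest-degree monomial forced into $xf$ after clearing denominators sits at degree at least $k - m + p - 2$ where $m = \frac{k}{2p}$; then the number of multiplications, hence the complexity, is at least that degree.

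First I would pin down the ``forced'' top-degree term. Writing $k = 2pm$ with $m \in \mathbb{N}$, the Horner expression in Proposition \ref{prop:classificationOf1Comp} realizes $1$ using powers $\zeta_k^m, \zeta_k^{3m}, \ldots, \zeta_k^{k-m}, \zeta_k^{mp}$, whose top power is $\zeta_k^{k-m}$. I would argue this is optimal by a coefficient-tracking argument: set up the linear system obtained by equating coefficients of $x^0, x^m, x^{2m}, \ldots$ on both sides of $\phi_k(x)(xg - 1) + 1 = xf \in \mathbb{N}_0[x]$, exactly as in the displayed inequality system \ref{prop2primepowercoefcondition}, but now adapted to $\phi_k = \phi_{2p}(x^{?})$-type factorizations for general even $k$ with smallest odd prime factor $p$. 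The key structural input is that $\phi_k$, reduced appropriately, looks like $1 - x^{m} + x^{2m} - \cdots \pm x^{(p-1)m}$ on the relevant arithmetic progression of exponents (spacing $m$), so the subtraction of $\phi_k$ forces alternating-sign partial sums of the $g_i$'s to be nonnegative integers; chaining these inequalities as in Proposition \ref{prop:2primepower1expression} (adding adjacent ones) forces certain $g_{jm-1}$ to be positive, and in particular forces a nonzero coefficient at degree $k - m$ (equivalently $g_{(k-m)/\text{something}} \ge 1$) unless a cancellation is available — and then one must rule out that the leftover freedom in the ``off-progression'' coefficients of $g$ can lower the degree. Once the degree is shown to be $\ge k - m$, the remaining $p - 2$ comes from the fact that building a degree-$(k-m)$ expression that also has the required $p$ scattered lower-degree terms costs at least $p - 2$ extra $\zeta_k$'s beyond the $k - m$ multiplications — I would make this precise via the degree-to-complexity bookkeeping already used (cf. Proposition \ref{prop:degreeHorner} and the Horner counts), showing no representation of $1$ with $\le k - m + p - 3$ ones can have the forced degree and term structure.

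The main obstacle I expect is the same one that defeated the authors' attempt in Proposition \ref{prop:2primepower1expression}: when $k$ has two distinct odd prime factors (or more), $\phi_k$ is not a simple $\phi_\ell(\pm x^{2^i})$ and the factorization $\frac{x^k - 1}{\phi_k} = \prod_{c \mid k,\, c \ne k} \phi_c$ does not collapse to a clean binomial-type polynomial, so the coefficient system no longer forces a single unambiguous top-degree term — there may be genuinely different optimal-degree expressions, and the inequality-chaining may leave slack that a clever choice of $g$ exploits to beat $k - m + p - 2$. Handling this likely requires either restricting the conjecture to $k = 2^a p^b$ (where Corollary \ref{corollary:1compPrimeBound} and the $\phi_{2p}(x^j)$ structure are available and the argument above should go through essentially verbatim), or developing a more robust lower-bound technique — e.g. a Galois-averaging argument in the spirit of Lemma \ref{lemma:coefEquality} applied across all primitive $2p$-th-root substitutions to show the optimal expression must be ``as symmetric as'' the Horner one. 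I would present the $k = 2^a p^b$ case as the theorem and flag the general case as the residual difficulty.
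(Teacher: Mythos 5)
This statement is a conjecture in the paper; the authors prove only the upper bound (Proposition \ref{prop:classificationOf1Comp}) and explicitly leave the matching lower bound open, so there is no paper proof to compare against — and your proposal does not close the gap either. The crux is the lower bound $||1||_k \ge k - \frac{k}{2p} + p - 2$, and your sketch defers exactly the steps that are hard. First, the coefficient-tracking you propose (``adapt the inequality system \ref{prop2primepowercoefcondition} to general even $k$'') is precisely what the paper attempted in Proposition \ref{prop:2primepower1expression} and could not push to a complexity statement even in the clean case $k = 2^a p^b$: the conditions obtained there (certain $g_{wj-1}$ nonnegative integers, $g_{pj+j-1}=0$ under the hypothesis $||1||_k < k$) do not force a unique top-degree term at degree $k - \frac{k}{2p}$, and indeed the worked examples ($k=6$, $k=50$) show several admissible choices of $g$ giving genuinely different low-complexity expressions for $1$. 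So even your restricted ``theorem'' for $k = 2^a p^b$ is not a matter of running the paper's argument verbatim; the paper's own Conjecture \ref{conj:classificationof1comp} remains open in that very case.

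Second, and independently, your final bookkeeping step is invalid as stated: from ``the expanded expression must contain a term of degree $\ge k - m$ plus $p$ scattered lower-degree terms'' you conclude a complexity of at least $(k-m) + (p-2)$, citing Proposition \ref{prop:degreeHorner}. But that proposition computes \emph{modified} complexity (Horner-type expressions), which is an upper bound for true complexity, not a lower bound; an optimal expression may multiply two multiplex subexpressions, and then the number of $\zeta_k$'s can be far smaller than ``degree plus number of terms minus one'' (the sum of coefficients of an expression of complexity $c$ can be as large as $3^{c/3}$, cf. Lemma \ref{lem:3.3}). Degree alone gives only $||1||_k \ge k - m$ under your forced-degree claim, and the additional $p-2$ requires a new argument ruling out cleverer factored expressions — nothing in the paper, nor in your sketch, supplies it. So the proposal is a reasonable research program in the spirit of Theorem \ref{theorem:primepowers1comp} and Proposition \ref{prop:2primepower1expression}, but both of its load-bearing steps (pinning the forced degree for general even $k$, and converting degree-plus-term structure into a complexity lower bound) are missing, and the second would fail as currently argued.
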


\begin{conj} \label{conj:whenisoneone}
    For even $k \leq 2p^2 - 4p$ where $p$ is the smallest odd prime divisor of $k$,
    \[
    ||1||_k = k.
    \]
\end{conj}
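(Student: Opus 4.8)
The inequality $\|1\|_k\le k$ is immediate from the expression $\zeta_k^{\,k}$, so the whole content is the lower bound $\|1\|_k\ge k$, and the plan begins with a reduction of the hypothesis. If some odd prime $q\ne p$ divides $k$, then $q\ge p+2$, so $k\ge 2pq\ge 2p(p+2)>2p^2-4p$, contradicting $k\le 2p^2-4p$; hence $k=2^{a}p^{b}$. If moreover $b\ge 2$, then $k/(2p)=2^{a-1}p^{b-1}\ge p>p-2$, i.e.\ $k>2p(p-2)=2p^2-4p$, again a contradiction. Thus $k=2^{a}p$ with $2^{a-1}\le p-2$ (the case $k=2^{a}$, having no odd prime divisor, being Theorem~\ref{theorem:primepowers1comp}). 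The case $a=1$, i.e.\ $k=2p$, is Proposition~\ref{prop:2p, 1Comp}, so I may assume $a\ge 2$ and set $j:=2^{a-1}$, so that $2\le j\le p-2$ and $k=2pj$.

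\textbf{From expressions to polynomials; the degree bound.} Suppose for contradiction that some expression for $1$ with base $\zeta_k$ uses fewer than $k$ copies of $\zeta_k$. It evaluates to a polynomial $xf\in\mathbb{N}_0[x]$ with $xf(0)=0$ and $xf(\zeta_k)=1$, so $\phi_k\mid xf-1$; as in Proposition~\ref{prop:2primepower1expression} write $xf-1=\phi_k(xg'-1)$ with $g'\in\mathbb{Q}[x]$. Taking $b=1$ there, $j=2^{a-1}$, $\phi_k=1-x^{j}+x^{2j}-\cdots+x^{(p-1)j}$, $\deg g'\le (p+1)j-2$, and the coefficients $c_w:=g_{wj-1}$ satisfy the system of that proposition; in particular $c_p\ge 1$ and $c_{p+1}=0$. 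In $xf=\phi_k\cdot xg'-\phi_k+1$ the monomial $x^{(2p-1)j}$ can only arise from the top term $x^{(p-1)j}$ of $\phi_k$ times $c_p x^{pj}$ (any other term of $xg'$ of the required degree would exceed $\deg g'+1$), so the coefficient of $x^{(2p-1)j}$ in $xf$ equals $c_p\ge 1$, whence $\deg(xf)\ge (2p-1)j=k-j$. So any counterexample lands within $j$ of the target degree, and I must recover the final $j$ from the \emph{shape} of $xf$, not just its degree.

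\textbf{A general complexity bound, and what remains.} A convenient tool, proved by a short induction on the expression tree, is: if an expression with $n$ copies of $x$ evaluates to $P\in\mathbb{N}_0[x]$ and $t(P)$ is the number of distinct exponents occurring in $P$, then $t(P)\le 2^{\,n-\deg P}$ — equivalently $\|P\|\ge\deg P+\lceil\log_2 t(P)\rceil$ (at a product node $\deg$ adds and $t(P_1P_2)\le t(P_1)t(P_2)$; at a sum node $t(P_1+P_2)\le t(P_1)+t(P_2)$, and since every non-empty subexpression evaluates to a nonzero polynomial with zero constant term one has $\deg P_i\ge 1$, which makes the estimate close up). Together with $\deg(xf)\ge k-j$ this reduces the whole problem to showing $\deg(xf)+\lceil\log_2 t(xf)\rceil\ge k$, i.e.\ that $t(xf)>2^{\,k-\deg(xf)-1}$; equivalently, the $\phi_k$-divisible, non-negative, low-degree polynomial $xf$ must carry at least $2^{\,\delta-1}+1$ monomials, where $\delta:=k-\deg(xf)\in\{1,\dots,j\}$.

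\textbf{The obstacle.} This last step is the hard part, and it is genuinely subtle: the degree-plus-monomials bound is \emph{not} by itself enough, since the Horner-type expression behind Proposition~\ref{prop:classificationOf1Comp} already produces a valid $xf$ of degree $k-j$ with only about $p$ monomials, which for small $p$ (relative to $a$) falls short of the required $2^{\,j-1}+1$. So one needs either (i) to push the coefficient analysis of Proposition~\ref{prop:2primepower1expression} past its $(p+1)$-st residue class — the non-negativity of the coefficients of $xf$ at $x^{(p+2)j},\dots,x^{(2p-1)j}$ gives further linear inequalities on $c_1,\dots,c_p$ not recorded there — and show that these, together with the recorded system, force $t(xf)$ up to the needed size (or force $\deg(xf)=k$ outright); or (ii) a complexity lower bound sharper than the one above, tailored to polynomials divisible by $\phi_k$. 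For (ii) the natural levers are the factorisation $x^{pj}+1=(x^{j}+1)\phi_k$, which forces $x^{pj}+1\mid (x^{j}+1)(xf-1)$, and reduction modulo $2$, where $\overline{\phi_k}=1+x^{j}+\cdots+x^{(p-1)j}=(x^{pj}+1)/(x^{j}+1)$ divides $\overline{xf}+1$, so that one wants a lower bound on the number of monomials of a degree-$<k$ multiple of $\overline{\phi_k}$ with nonzero constant term. Morally, $xf(\zeta_k)=1$ writes $1$ as a non-negative combination of powers of $\zeta_k$, and since $k=2^{a}p$ the only relations in play come from $\sum_{i=0}^{p-1}\zeta_p^{\,i}=0$ and $1+\zeta_2=0$; quantifying the cost of fitting these into degrees below $k$ is exactly what Conjecture~\ref{conj:whenisoneone} asserts, and — since even the exact value of $\|1\|_k$ for $k>2p^2-4p$ (Conjecture~\ref{conj:onebestwecando}) is still open — I expect closing this gap to require genuinely new input rather than a refinement of the present techniques.
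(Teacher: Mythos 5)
This statement is a conjecture in the paper — no proof of it exists there — and your proposal, by its own admission, does not close it either, so what you have is a genuine gap rather than a proof. To give you credit for what does work: the reduction of the hypothesis to $k=2^{a}p$ with $2^{a-1}\le p-2$ is correct (an extra odd prime or $b\ge 2$ indeed violates $k\le 2p^2-4p$, consistent with Corollary \ref{corollary:reverse1Comp}), the base cases are correctly delegated to Theorem \ref{theorem:primepowers1comp} and the $k=2p$ proposition, the extraction of $\deg(xf)\ge k-j$ from the coefficient conditions of Proposition \ref{prop:2primepower1expression} (using $c_p\ge 1$, $c_{p+1}=0$ and the degree cap $\deg(xf)\le k-1$ for a putative counterexample) checks out, and your lemma $\|P\|\ge \deg P+\lceil\log_2 t(P)\rceil$ is a correct and genuinely new observation relative to the paper, whose own analysis of this regime stops at the inequality system of Proposition \ref{prop:2primepower1expression} and the upper bound of Proposition \ref{prop:classificationOf1Comp}.

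The gap is the final step, and it is not a routine one: you must show that every $\phi_k$-divisible $xf\in\mathbb{N}_0[x]$ with $xf(\zeta_k)=1$ and $\deg(xf)\le k-1$ forces complexity at least $k$, and your own example shows the proposed mechanism cannot do this — the polynomial $x^{j}+x^{3j}+\cdots$ (odd multiples of $j$ omitting $pj$) underlying Proposition \ref{prop:classificationOf1Comp} is exactly such an $xf$, has degree $k-j$ and only $p-1$ monomials, so $\deg+\lceil\log_2 t\rceil$ certifies far less than $k$ whenever $p-1<2^{j}$, which the hypothesis $j\le p-2$ does not rule out. So the criterion ``$t(xf)\ge 2^{\delta-1}+1$'' is simply false for some admissible $xf$, and ruling out a counterexample must instead use information the monomial count discards (e.g.\ that such sparse $xf$ still cost at least $k$ ones, or sharper coefficient constraints from the residue classes $x^{(p+2)j},\dots,x^{(2p-1)j}$ that you mention but do not derive). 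Since those additional inequalities are neither written down nor shown to force $\deg(xf)=k$ or a large enough complexity, the conjecture remains open after your argument; what you have is a correct reduction plus a partial lower bound of roughly $k-j+\lceil\log_2 t(xf)\rceil$, not a proof.
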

This conjecture essentially states that Lemma \ref{prop:classificationOf1Comp} is a double implication. 

To further explore complexity in cyclotomic rings, let us define the following set:
\begin{definition}
    A number is \textbf{minimal} when there is an optimal representation that does not contain the multiplication of two multiplex numbers. The set of all minimal numbers is denoted by $\mathbb{M}$.
\end{definition}
Where multiplex is defined as follows:
\begin{definition}\label{def:multiplex}
    For the purposes of this paper, a \textbf{multiplex} number is an element of the set $\mathbb{Z}[\zeta_k] \setminus \{a \zeta_k^n: a \in \mathbb{N}, n \in \mathbb{N}\} $. 
\end{definition}
The intuition for this is that there are at least 2 non-zero coefficients.

For example, these are some minimal numbers in $k=4$:
\begin{align*}
    2&=i^3\cdot 2i\\
    2+2i &= 2i + 2i\cdot i^3.
\end{align*}
Most numbers are minimal, but there are still numbers that are nonminimal. For example, the two least complex nonminimal numbers for $k = 4$ are
\begin{align*}
    3-4i &= (i-2)^2 = (i + i(i + i))^2\\
    3 - 5i &= (i-1)(i - 4) = (i + i\cdot i)(i + (i + i)(i + i)),
\end{align*}
both with complexity 8, which one can verify with a little bit of coding.

Investigating minimal numbers, we come up with the following propositions and conjectures.

\begin{prop}
$\mathbb{M}$ is closed under ``optimal addition'': \\
For $a,b \in \mathbb{M}$ and their optimal representations $A,B$ respectively, if $A+B$ is an optimal representation of $a+b$, then $a+b \in \mathbb{M}$
\end{prop}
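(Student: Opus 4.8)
The plan is to read the expression $A+B$ as a binary tree, in the spirit of the rigorous setup of \cite{Reyna2021}: the leaves are copies of the base $\zeta_k$ and the internal nodes are labelled $+$ or $\cdot$, with ``a multiplication of two multiplex numbers occurring in a representation'' meaning an internal $\cdot$-node both of whose two input subexpressions evaluate to multiplex elements of $\mathbb{Z}[\zeta_k]$. The single observation that does all the work is this: in the tree for $A+B$ the root is a $+$-node (so it is never a multiplication of anything), and every internal $\cdot$-node lies entirely within the subtree computing $A$ or entirely within the subtree computing $B$. Consequently, if neither $A$ nor $B$ contains a multiplication of two multiplex numbers, then neither does $A+B$.

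With that in hand, the steps are as follows. First, since $a\in\mathbb{M}$ and $b\in\mathbb{M}$, I would fix optimal representations $A^\star$ of $a$ and $B^\star$ of $b$ that witness minimality, that is, in which no $\cdot$-node multiplies two multiplex numbers. Second, every optimal representation of $a$ uses exactly $||a||$ copies of $\zeta_k$ and likewise for $b$, so $A^\star+B^\star$ uses exactly $||a||+||b||$ copies of $\zeta_k$, the same number as $A+B$; since the hypothesis says $A+B$ is optimal for $a+b$, this common count equals $||a+b||$, and therefore $A^\star+B^\star$ is also an optimal representation of $a+b$. Third, apply the observation to $A^\star+B^\star$: its $\cdot$-nodes are precisely those of $A^\star$ together with those of $B^\star$, none of which is a multiplication of two multiplex numbers, and its root is a $+$-node. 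Hence $A^\star+B^\star$ is an optimal representation of $a+b$ containing no multiplication of two multiplex numbers, so $a+b\in\mathbb{M}$, as claimed.

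I expect the only genuinely delicate point to be the bookkeeping hidden in the phrase ``their optimal representations $A,B$'': a priori the representations named in the statement need not be the minimality-witnessing ones, so one cannot conclude merely from the structure of $A+B$ itself. The remedy is exactly the optimality-transfer step above, since optimality depends only on the number of $\zeta_k$'s used, one is free to replace $A,B$ by $A^\star,B^\star$ without losing optimality of the sum. The other thing worth stating explicitly (rather than a real obstacle) is the convention that attaching a fresh top-level $+$-node cannot create a multiplication of two multiplex numbers, which is immediate from the tree picture; once that convention is pinned down, the argument is essentially formal.
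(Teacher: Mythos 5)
Your proof is correct and is essentially the argument the paper has in mind: the paper dismisses this proposition with ``This follows from the definition,'' and your write-up simply makes that explicit, with the optimality-transfer to minimality-witnessing representations $A^\star,B^\star$ being a careful (and valid) handling of the one subtlety the paper glosses over.
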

\begin{proof}
This follows from the definition. 
\end{proof}
\begin{remark}
It is important to note that the converse might not be true. 
\end{remark}

\begin{conj} \label{conj:minimality}
All non-multiplex numbers are minimal.  
\end{conj}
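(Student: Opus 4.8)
The plan is to argue by strong induction on $||z||_k$ through a minimal counterexample. Suppose the claim fails and let $z = a\zeta_k^n$, with $a \in \mathbb{N}$, $n \in \mathbb{N}$ in the notation of Definition \ref{def:multiplex}, be a non-multiplex number of smallest complexity that is \emph{not} minimal; so every optimal representation of $z$ contains a product of two multiplex numbers (an ``offending product''), and among all optimal representations of $z$ we fix one, $R$, minimizing the number $N \ge 1$ of offending products it contains. If $||z||_k = 1$ then $z = \zeta_k$, whose only optimal representation uses no multiplication at all and is therefore minimal, so we may assume $||z||_k \ge 2$. Throughout I use the standard facts that any subterm of an optimal representation is an optimal representation of the value it computes, and that if the top operation of $R$ is $\cdot$ or $+$ with children computing $z_1, z_2$, then $||z_1||_k + ||z_2||_k = ||z||_k$, so each $z_i$ has strictly smaller complexity.

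The first step is a clean structural observation about multiplication. If the outermost operation of $R$ is $\cdot$, write $z = z_1 z_2$ with $z_i \in \mathbb{Z}[\zeta_k]$; if $z_2 = b\zeta_k^m$ is non-multiplex, then $z_1 = z/z_2 = (a/b)\zeta_k^{n-m}$, and since $\zeta_k^{m-n} \in \mathbb{Z}[\zeta_k]$ we get $a/b = z_1\zeta_k^{m-n} \in \mathbb{Z}[\zeta_k]\cap\mathbb{Q} = \mathbb{Z}$, so $z_1$ is non-multiplex as well. Hence the two factors of a monomial product are \emph{either both non-multiplex or both multiplex}. In the both-non-multiplex case, the children $R_1, R_2$ are optimal representations of non-multiplex numbers of smaller complexity, so by the inductive hypothesis each is minimal; replacing them by minimal optimal representations yields an optimal representation of $z$ whose top product is non-multiplex $\times$ non-multiplex and which contains no offending product at all, contradicting $N \ge 1$. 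So if the top operation of $R$ is $\cdot$, both of its factors are multiplex and the top product is itself offending.

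Next I would treat the case where the outermost operation of $R$ is $+$, say $z = z_1 + z_2$ and $N = N_1 + N_2$ with $N_i$ the number of offending products inside $R_i$; by symmetry assume $N_1 \ge 1$. Each $R_i$ is an optimal representation of $z_i$ with $||z_i||_k < ||z||_k$, so if $z_1$ were non-multiplex it would be minimal by the inductive hypothesis, and swapping $R_1$ for a minimal optimal representation would produce an optimal representation of $z$ with $N - N_1 < N$ offending products, a contradiction. Thus $z_1$ is multiplex. In both surviving cases the situation is the same: a non-multiplex $z$ all of whose optimal computations are forced, somewhere, to multiply two multiplex values $w_1, w_2$, and the natural hope is a \emph{local rewrite} replacing the subtree computing $w_1 \cdot w_2$ by a cheaper-or-equal one with fewer offending products.

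I expect this local rewrite to be the main obstacle, and I suspect no purely local argument exists: the subterm value $w = w_1 w_2$ need not be non-multiplex, and for multiplex values minimality genuinely fails (for $k = 4$ the number $3 - 5i = (i-1)(i-4)$ is a genuinely non-minimal multiplex number of complexity $8$, as noted just before Conjecture \ref{conj:minimality}), so the inductive statement says nothing about $w$. What seems to be needed is a \emph{global} argument exploiting that the \emph{root} value is the monomial $a\zeta_k^n$: for instance, combining Galois automorphisms (which by the remark following Proposition \ref{prop:comp1}'s alternate proof preserve complexity and optimality and send $a\zeta_k^n$ to $a\zeta_k^{n'}$) with norm and ideal bookkeeping along the computation tree — $N(w_1)N(w_2) \mid a^{\varphi(k)}$ and analogous divisibilities at every node — to show that any computation of $a\zeta_k^n$ can be reorganized so that every intermediate value is a unit times a natural number, i.e. a monomial, at which point the both-non-multiplex reduction above applies at every node and closes the induction. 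As intermediate milestones I would first settle the purely rational case $z = a \in \mathbb{N}$ (already nontrivial and a natural warm-up), then the prime-power and squarefree $k$ where the cyclotomic and ideal structure is cleanest, and in parallel extend the computational verification of Conjecture \ref{conj:minimality} to more $k$ to rule out a small counterexample.
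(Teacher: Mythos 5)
You have not proved this statement, and neither does the paper: it appears there only as Conjecture \ref{conj:minimality}, supported by computation (the first 16 complexities for $k=4$) and a heuristic remark about conjugate imaginary parts, so there is no paper proof to compare against; the only question is whether your argument closes the conjecture, and — as you yourself say — it does not. The parts you actually establish are correct and worth recording as a lemma: the cofactor observation (if a product equals a non-multiplex number and one factor is non-multiplex, the quotient is a positive rational algebraic integer times a power of $\zeta_k$, hence non-multiplex), and the induction bookkeeping showing that in a minimal counterexample, with an optimal representation chosen to minimize the number of offending products, every offending product must sit inside a multiplex subterm (top product with both factors multiplex, or an addition whose offending child is multiplex). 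But this only relocates the difficulty: the surviving configuration is precisely an offending product $w_1w_2$ of two multiplex values whose value may itself be multiplex, and the inductive hypothesis says nothing about multiplex values — indeed it cannot, since the paper's examples $3-4i=(i-2)^2$ and $3-5i=(i-1)(i-4)$ show that multiplex numbers can be genuinely non-minimal. The heart of the conjecture is exactly this case, and it is untouched.

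The sketched global repair also has concrete defects as stated. First, the target claim — that any optimal computation of $a\zeta_k^n$ can be reorganized so that \emph{every} intermediate value is a monomial — is strictly stronger than minimality (minimality allows arbitrary multiplex intermediates as long as they are only added, or multiplied by a non-multiplex factor), and no argument is offered for it. Second, the norm bookkeeping does not propagate: divisibility relations such as $N(w_1)N(w_2)\mid a^{\varphi(k)}$ hold only when the path from the subterm to the root consists entirely of multiplications; a single addition above the offending product destroys them (for $k=4$, the subterm $(i+i)(i+i)=-4$ has norm $16$, while after adding $i^4$ the value $-3$ has norm $9$). Third, Galois automorphisms permute optimal representations among associates of $a\zeta_k^{n}$ but place no constraint by themselves on whether subterms are multiplex. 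So the honest assessment is: your first two reductions narrow where a counterexample could live, the rest is a research plan rather than a proof, and the statement remains open, exactly as the paper leaves it.
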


\begin{remark}
    By testing the first 16 complexities for the Gaussian Integers, we have yet to find a counterexample to this, or to the more specific conjecture that all natural numbers are minimal. However, it should be noted that the first non-minimal Gaussian Integer has complexity 8, so we may have not tested enough complexities to find one that happens to be a natural number.
\end{remark}

\begin{conj} \label{conj:bestTheoreticalUpper}
The largest value of $\frac{||n||_k}{\ln n}$ for a fixed $k > 1$ is $\frac{||2||_k}{\ln 2}$.
\end{conj}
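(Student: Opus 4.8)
I would first restate the conjecture additively. Put $C_k := \frac{||2||_k}{\ln 2}$. Since $n=2$ already attains the ratio $\frac{||n||_k}{\ln n}=C_k$, and since submultiplicativity gives $||2^j||_k \le j\,||2||_k$ (so the ratio never exceeds $C_k$ on the powers of $2$), the conjecture is equivalent to the clean inequality
\[
||n||_k \;\le\; C_k \ln n \qquad\text{for every integer } n\ge 2 .
\]
So the statement is really a uniform logarithmic upper bound whose leading constant is pinned to be exactly $||2||_k/\ln 2$. (I read $n$ as a natural number $\ge 2$, as the $\ln n$ and the comparison with $n=2$ suggest; an extension to all of $\mathbb{Z}[\zeta_k]$ with $\ln|z|$ in place of $\ln n$ would meet the same issues plus the remainder troubles already flagged around the alternative division algorithm.)

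\textbf{Plan.} The plan is to prove this by strong induction on $n$, using only one structural fact: if $A,B$ are optimal expressions for $a,b$ then $A\cdot B$ is an expression for $ab$ using $||a||_k+||b||_k$ symbols, so $||ab||_k \le ||a||_k + ||b||_k$. Consequently, if $n$ is composite, say $n=ab$ with $2\le a,b<n$, the inductive hypothesis immediately gives
\[
||n||_k \le ||a||_k+||b||_k \le C_k(\ln a + \ln b) = C_k\ln n ,
\]
so the composite case is free and the whole conjecture collapses to the prime case, call it $(\star)$:
\[
||p||_k \le C_k\ln p \qquad\text{for every prime } p. \qquad(\star)
\]
To attack $(\star)$ I would expand $p$ in a Horner / base-$b$ form, exactly as in Proposition~\ref{prop:cycloweakupper} and in the Zelinsky-style ``switching bases'' bound, but now tracking the constant with care. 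Writing $p$ in binary and running Horner costs about $(\log_2 p)\,||2||_k$ for the doublings --- already essentially the entire budget $C_k\ln p$ --- plus one extra $||1||_k$ per $1$-digit, and that overhead is fatal at face value. The idea is to remove it with a \emph{redundant / signed-digit} expansion (non-adjacent form), which is available precisely because $k>1$: unlike the $k=1$ setting, $\mathbb{Z}[\zeta_k]$ contains very cheap ``$-1$-like'' elements (for instance $\zeta_k^{k/2}$ when $k$ is even, or the all-nontrivial-roots sum $\zeta_k+\cdots+\zeta_k^{k-1}$), so a digit can be adjusted by a cheap subtraction instead of a full copy of $1$. One then has to check that in such a representation each Horner step contributes at most $||2||_k$ on average, so that the total lands below $C_k\ln p$.

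\textbf{The main obstacle.} The hard part will be exactly this accounting: bounding the per-digit ``increment'' cost of the Horner recursion --- equivalently, the cost of the remainders in a base-switching recursion, which is what wrecks the naive argument, since a remainder $r=1$ needs $||1||_k$ symbols against a budget of $C_k\ln 1 = 0$ --- and doing so \emph{uniformly in $k$}. There is genuinely no slack, because $||1||_k$ and $||2||_k$ can both be of order $k$; one must really exploit the $k>1$-specific cheap ``$-1$'' elements together with a proof that signed-digit forms can be chosen sparse enough. It is worth stressing that some ingredient special to $k\ge 2$ is unavoidable: for $k=1$ the analogous statement is \emph{false}, e.g.\ $||5||=5 > 2\log_2 5 = \frac{||2||}{\ln 2}\ln 5$, so no proof that ignores the structure of cyclotomic bases can work. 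If the clean signed-digit argument proves too slippery, a fallback is a hybrid approach: verify $(\star)$ by computer for all primes below a ($k$-dependent) threshold, and for large primes sandwich the ratio below $C_k$ using a sufficiently tight upper bound together with the lower bound $||n||_k \ge 3\log_3|n|$; the catch is that the upper bounds proved in this section have leading constant of order $\frac{k}{2\ln 2}$, well above $C_k$, so a new upper bound with leading constant at most $C_k$ would still be needed --- which is essentially $(\star)$ again.
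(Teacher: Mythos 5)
There is a genuine gap: what you have written is a program, not a proof, and the key step is exactly the part you leave open. Note first that the statement you are asked to prove is presented in the paper only as Conjecture \ref{conj:bestTheoreticalUpper}, supported by numerical verification for $k=2,3,4$ up to modest complexities, so there is no proof in the paper to match; but your proposal does not close it either. Your reformulation $||n||_k \le C_k\ln n$ for $n\ge 2$ and the reduction to primes via $||ab||_k\le ||a||_k+||b||_k$ and strong induction are correct and worth recording, but they transfer the entire content of the conjecture to the prime case $(\star)$, which you then only sketch and, by your own admission, do not establish.

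Moreover, the sketched route for $(\star)$ cannot work in the form you describe. A base-$2$ Horner scheme uses about $\log_2 p$ multiplications by $2$, and $||2||_k\log_2 p = C_k\ln p$ is already the \emph{entire} budget; the only slack is the $O(||2||_k)$ saved by the floor in $\lfloor\log_2 p\rfloor$. Hence your hope that ``each Horner step contributes at most $||2||_k$ on average'' would require the digit additions to be essentially free, which they are not: even with a non-adjacent (signed-digit) form a constant fraction of the $\log_2 p$ digits is nonzero, and each nonzero digit costs at least $1$ (and the cheap ``$-1$-like'' elements you invoke, such as $\zeta_k^{k/2}$, cost on the order of $k$), so the overhead grows like a constant times $\log p$, not $O(1)$. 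Any successful argument must instead exploit that bases other than $2$ are strictly more efficient per $\ln$ (e.g.\ that $||3||_k/\ln 3 < C_k$, which is itself unproven for general $k$), and then control the additive constants uniformly in $k$ so that small and moderate primes do not violate the bound --- this is precisely where the difficulty of the conjecture lives, and it is untouched. Incidentally, your parenthetical claim that the paper's upper bounds have leading constant ``well above $C_k$'' is not accurate in general: the constant in Proposition \ref{prop:cycloweakupper} is roughly $\tfrac{k+5}{2\ln 2}$, which for $k\ge 4$ is \emph{below} $C_k\approx\tfrac{||2||_k}{\ln 2}$ when $||2||_k$ is of order $k$, so for fixed such $k$ the asymptotic regime is not the obstruction; the obstruction is the small-$n$ range and uniformity in $k$.
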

\begin{remark}
We have verified this for $k = 2$ up to complexity 26, $k=4$ for complexities up to $26$, and $k=3$ for complexities up to 10. This is the analogue of $\frac{26}{\ln 1439}\ln n$ as the best theoretical upper log bound for $k=1$.
\end{remark}

\begin{conj}\label{conj:powerliftingBound}
    \[
    ||n||_{p^k} \le ||n||_{p^{k+1}}
    \]
    for $p$ a prime and $k$ a natural.
\end{conj}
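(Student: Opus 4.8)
The plan is to take an optimal base-$\zeta_{p^{k+1}}$ representation of $n$ and project it onto a base-$\zeta_{p^k}$ representation of no greater cost. The algebraic input is the identity $\phi_{p^{k+1}}(x) = \phi_{p^k}(x^p)$ (equivalently $\zeta_{p^{k+1}}^{\,p} = \zeta_{p^k}$), which, since $\zeta_{p^{k+1}}$ is a root of the degree-$p$ polynomial $T^p - \zeta_{p^k}$ over $\mathbb{Q}(\zeta_{p^k})$, makes $\{1, \zeta_{p^{k+1}}, \dots, \zeta_{p^{k+1}}^{\,p-1}\}$ a basis of $\mathbb{Q}(\zeta_{p^{k+1}})$ over $\mathbb{Q}(\zeta_{p^k})$. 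Let $E$ be an optimal expression for $n$ in base $\zeta_{p^{k+1}}$ and let $P_E \in \mathbb{N}_0[x]$ be its associated polynomial, so $x \mid P_E$, the number of symbols in $E$ equals $||n||_{p^{k+1}}$, and $P_E(\zeta_{p^{k+1}}) = n$. Write $P_E(x) = \sum_{r=0}^{p-1} x^r P_r(x^p)$ with each $P_r \in \mathbb{N}_0[y]$; evaluating at $\zeta_{p^{k+1}}$ and comparing coefficients in the basis above forces $P_0(\zeta_{p^k}) = n$ and $P_r(\zeta_{p^k}) = 0$ for $1 \le r \le p-1$. Since the constant term of $P_E$ vanishes so does that of $P_0$, so (assuming $n \neq 0$, the degenerate case being checked directly) $P_0$ is itself the polynomial of a one-symbol expression evaluating at $\zeta_{p^k}$ to $n$; hence $||n||_{p^k} \le c(P_0)$, where $c(Q)$ denotes the least number of symbols in an expression whose polynomial is exactly $Q$.

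What remains is the ``collapse lemma'': $c(P_0) \le c(P_E) = ||n||_{p^{k+1}}$, which would finish the proof. The clean general statement to aim for is that for every $Q \in \mathbb{N}_0[x]$ with $x \mid Q$, writing $Q(x) = \sum_{r=0}^{p-1} x^r Q_r(x^p)$, one has $c(Q_0) \le c(Q)$ (with $c(0) = \infty$ treated as vacuous). I would attempt this by structural induction on an optimal expression for $Q$, but with the hypothesis strengthened to track the whole tuple $(Q_0, \dots, Q_{p-1})$, viewed as the element $\sum_r Q_r \omega^r$ of the semiring $\mathbb{N}_0[y][\omega]/(\omega^p - y)$ (equivalently, track the matrix $P_S(M)$ for $M$ the companion matrix of $T^p - y$). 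Leaves ($x \mapsto (0,1,0,\dots,0)$) and sums (componentwise) are immediate, so the whole question is whether the bookkeeping closes under products.

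The product case is where I expect the real difficulty. For $S = S_1 S_2$ the tuples combine by convolution-with-carry, $Q_m^{(S)} = \sum_{r+s=m} Q_r^{(1)} Q_s^{(2)} + y \sum_{r+s=m+p} Q_r^{(1)} Q_s^{(2)}$, and it is not clear that an additive symbol budget survives this; in particular the components $Q_r$ with $r \ge 1$, which are ultimately irrelevant because they vanish at $\zeta_{p^k}$, can act as indispensable scaffolding inside a larger product and so cannot simply be discarded. Two fallbacks I would pursue: first, prove the collapse lemma for $p = 2$ and small $k$ (where the carry bookkeeping is light enough to do by hand) and for the special case of expressions whose polynomial already lies in $\mathbb{N}_0[x^p]$; second, search for a better invariant — for instance bounding $c(P_0)$ by an amortized count of the symbols of $E$ lying below a leaf that feeds an exponent $\equiv 0 \pmod p$ — and, failing that, test numerically whether the collapse lemma can fail, since a failure would show the conjecture (if true) needs an argument of a different shape.
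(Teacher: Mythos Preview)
This statement is labelled a \emph{conjecture} in the paper and is left open: there is no proof to compare against, only a remark describing an approach the authors tried (assume the minimality conjecture, then literally replace $\zeta_{p^{k+1}}$ by $\zeta_{p^k}$ in an optimal expression) and the obstruction they hit (the resulting value need not be $n$ but only an associate, as in their example $7 = i(i+(i+i)^3)$).

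Your setup is a genuine improvement on that remark. By writing $P_E(x)=\sum_{r=0}^{p-1}x^{r}P_r(x^{p})$ and using that $\{1,\zeta_{p^{k+1}},\ldots,\zeta_{p^{k+1}}^{\,p-1}\}$ is a $\mathbb{Q}(\zeta_{p^k})$-basis of $\mathbb{Q}(\zeta_{p^{k+1}})$, you correctly isolate a polynomial $P_0\in\mathbb{N}_0[y]$ with $y\mid P_0$ and $P_0(\zeta_{p^k})=n$ exactly. This completely sidesteps the ``wrong associate'' problem the paper ran into; that part of your argument is sound and is not something the paper observed.

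The gap you name is real, and you are right that it is the entire content of the problem. Your ``collapse lemma'' $c(P_0)\le c(P_E)$ is a statement purely about complexity in $\mathbb{N}_0[x]$; at that point you have discarded the cyclotomic constraints $P_r(\zeta_{p^k})=0$ for $r\ge 1$. There is no reason to expect the lemma to hold at that level of generality: in the product case the carry term $y\sum_{r=1}^{p-1}A_rB_{p-r}$ contributing to $(P_{E_1}P_{E_2})_0$ is built entirely from components on which your induction hypothesis gives no control, exactly as you say. If the conjecture is to be proved along these lines, the information that each $P_r$ with $r\ge 1$ vanishes at $\zeta_{p^k}$ (hence is divisible by $\phi_{p^k}$ and in particular has large degree) almost certainly has to be fed back into the complexity bookkeeping rather than thrown away. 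Two minor points: the phrase ``one-symbol expression'' for $P_0$ is garbled (you presumably mean an expression in the single symbol $y$), and the ``degenerate case'' $n=0$ is not actually checked directly anywhere --- for $p=2$ it follows from the paper's formula $\|0\|_{2^k}=2^{k-1}+2$, but for odd $p$ it is not obviously easier than the general case.
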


\begin{remark}
    Some proof approaches we have attempted is to assume Conjecture \ref{conj:minimality}, replace an optimal representation using $\zeta_{p^{k+1}}$ with $\zeta_{p^k}$. The primary issue with this approach is that while this forces addends to have the same power of $\zeta_{p^{k+1}}$, this doesn't guarantee that we can have an expression of the proper associate of $n$ using the $\zeta_{p^k}$. For instance, $7 = i (i + (i + i)^3)$ but $7 =  -1 + -1 \cdot (-1 + -1)^3$ (the first expression `passes through' an associate of 7, but the second obtains it directly).
    
    Another approach is to show that the extra degrees of freedom granted by allowing the vectors corresponding to powers of $\zeta_{p^{k+1}}$ can't allow for a ``shorter'' path.
\end{remark}

\begin{remark}
    It may also be interesting to look at cyclotomic rings in terms of rationality. If a representation represents a natural number, the sum of the imaginary components must be 0. For most $k$ and $p$, this imaginary component will be irrational, specifically outputs of the sine function on a rational multiple of $2\pi$. Thus the sum of the imaginary components of the vectors composing the optimal expression may have structure that can be analyzed. It also may be true that if $a$ is in the expression for a natural, the conjugate must also, since we need the sum of the imaginary components to equal 0 and they are irrational. This would then imply the minimality conjecture (\ref{conj:minimality}).
\end{remark}

\section{Polynomial Rings} \label{Polynomial Rings}
The next step for generalization is to turn into the polynomial ring $\mathbb{Z}[x]$. Since our expressions are written by summing and multiplying $x$'s, it would assist our exploration to investigate this ring. 

\begin{obs}\label{obs:basicPolyProps}
We can see that for $k \in \mathbb{N}$,
\begin{align*}
    ||x^k|| &= k\\
    ||kx|| &= k
\end{align*}
\end{obs}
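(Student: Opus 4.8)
The plan is to prove both statements by combining the obvious explicit constructions with matching lower bounds obtained from a structural induction on expressions. For the upper bounds, the expressions $x\cdot x\cdots x$ and $x+x+\cdots+x$ (with $k$ copies of $x$ each) show immediately that $||x^k|| \le k$ and $||kx|| \le k$, so the whole content is in the reverse inequalities.

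First I would record the basic structural lemma: since every expression is built from the symbol $x$ using only $+$ and $\cdot$, an easy induction on the expression shows that its value in $\mathbb{Z}[x]$ is a polynomial with non-negative coefficients and zero constant term, hence of degree at least $1$; moreover degrees add under multiplication and are taken as a maximum under addition, since with no subtraction available no cancellation of leading terms can occur. A second induction, this time on the number of symbols, then gives that an expression using $m$ copies of $x$ evaluates to a polynomial of degree at most $m$ (base case $\deg x = 1$; the multiplicative and additive steps follow from the previous sentence). Applying this to $x^k$, which has degree $k$, yields $||x^k|| \ge k$, so $||x^k|| = k$.

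For $||kx|| \ge k$ I would induct on $k$; the case $k=1$ is trivial because $x$ is the only one-symbol expression. For $k \ge 2$, let $E$ be any expression evaluating to $kx$. Its outermost operation cannot be a multiplication, since a product of two expressions has degree at least $1+1=2$ while $\deg(kx)=1$. Hence $E = F+G$, where $F$ and $G$ evaluate to polynomials $q$ and $r$ with $q+r = kx$; because $q$ and $r$ have non-negative coefficients, zero constant term, and degree at least $1$, they must be $q = ax$ and $r = bx$ with $a,b \ge 1$ and $a+b = k$. Since $k \ge 2$ we have $a,b < k$, so by the induction hypothesis $F$ uses at least $a$ copies of $x$ and $G$ at least $b$, and therefore $E$ uses at least $a+b=k$ copies of $x$. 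This gives $||kx|| \ge k$, hence $||kx|| = k$.

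The two structural inductions are routine bookkeeping. The one load-bearing point — and the only place a careless argument would slip — is that the absence of subtraction forces every expression to evaluate to a polynomial of degree at least one, in particular never to a nonzero constant; this is precisely what makes the multiplicative case in the $kx$ argument vacuous, and once it is in hand the rest is immediate.
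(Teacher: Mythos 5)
Your proof is correct. The paper states this only as an unproved observation ("We can see that\dots"), so there is no argument to compare against; your structural induction supplies exactly the routine verification that is being taken for granted. The two load-bearing facts you isolate --- that without subtraction every expression evaluates to a polynomial with non-negative coefficients, zero constant term, and degree at least one (so no cancellation and no constant values), and that consequently degree is bounded by the number of $x$'s --- are precisely what make the bound $||x^k|| \ge k$ and the additive decomposition forced in the $kx$ case, and they are consistent with the machinery the paper develops later (e.g.\ Lemma \ref{lem:3.2} and the remark that $f(1)$ is the coefficient sum). One small simplification available to you: since degree is monotone under both operations, an expression evaluating to the degree-one polynomial $kx$ can contain no multiplication anywhere, so it is literally a sum of $x$'s and must use exactly $k$ of them, avoiding the induction on $k$ altogether.
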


An important application of polynomial rings is to quickly convert expression to a representation. Turning an expression to a representation can be done by substituting a base $k$ for $x$.

For some example data, see Table \ref{tab:s_k}.

\begin{table}
    \centering
    \begin{tabular}{c|c} 
    $k$ & $n$ \\
    1 & $x$ \\
    2 & $x^2, 2x$ \\
    3 & $x^3, x^2 + x, 2x^2, 3x$ \\
    4 & $x^4, x^3 + x, x^3 + x^2, x^2 + 2x, 2x^3, 2x^2 + x, 3x^2, 4x, 4x^2$ \\
    5 & $\cdots 5x$\\
    \vdots 
    \end{tabular}
    \caption{A table of expressions $n$ with complexities $k$.}
    \label{tab:s_k}
\end{table}

We investigate the bounds, not on a specific polynomial, but for the number of polynomials with complexity $k$. For each polynomial of complexity $a$ and complexity $b$, it is not always possible to create a new polynomial of complexity $a + b$ just by adding or multiplying the two simpler polynomials. For example, $2x + 2x = 4x$, but this is a repeat of the expression $x + 3x$, a sum that also implies a complexity of 4. In general, it is difficult to calculate the size of the complexity classes because of these repeats.

\begin{definition}
    Let the $k$th \textbf{complexity class} (denoted by $K_k$) be the set of all polynomials with complexity $k$
\end{definition}

\begin{table}[H]
\centering
\captionsetup{justification=centering,margin=1cm}
    \begin{tabular}{|c|c||c|c|}
        \hline 
        $n$ & $|K_n|$ & $n$ & $|K_n|$\\
        \hline
        1 & 1    &    8 & 252\\
        2 & 2    &    9 & 652\\
        3 & 4    &    10 & 1554\\
        4 & 9    &    11 & 3967\\
        5 & 19   &    12 & 10280\\
        6 & 45   &    13 & 27008\\  
        7 & 104  &    14 & 71738\\
        \hline
    \end{tabular}
    \caption{
\label{tab:polycomplexities} Table of first few complexity classes $K_n$, where $n$ is the complexity value and $|K_n|$ is the number of elements in the complexity class.}
\end{table}

\begin{prop}\label{prop:upperboundCompclass}
For all $n \in \mathbb{Z}$, 
\[
|K_n| \leq  n^{n-1}
\]
\end{prop}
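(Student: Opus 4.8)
The plan is to bound the number of distinct complexity-$n$ polynomials by counting the syntax trees (formulas) that realize them and showing each such tree is encoded by a sequence of length $n-1$ over an alphabet of size (at most) $n$. Every element of $K_n$ has an optimal expression, which is a binary tree with exactly $n$ leaves (each leaf an $x$) and $n-1$ internal nodes, each internal node labelled $+$ or $\cdot$. So the first step is the crude estimate: the number of such labelled binary trees is $C_{n-1}\cdot 2^{n-1}$, where $C_{n-1}$ is the $(n-1)$st Catalan number. Since the map (optimal expression) $\to$ (polynomial it represents) is surjective onto $K_n$, we get $|K_n| \le C_{n-1} 2^{n-1}$. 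Then I would check that $C_{n-1} 2^{n-1} \le n^{n-1}$; using $C_{n-1} = \frac{1}{n}\binom{2n-2}{n-1} \le \frac{4^{n-1}}{n}$, it suffices to verify $\frac{8^{n-1}}{n} \le n^{n-1}$, which fails for small $n$, so this naive route does not immediately give the stated bound — a warning that I should instead exploit structure.

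The better approach is to encode an optimal expression more economically. Process the binary tree in a fixed canonical order (say, a left-to-right depth-first traversal of the $n-1$ internal nodes). I claim each internal node can be recorded by a single symbol from an alphabet of size $n$: roughly, one bit for the operation ($+$ vs $\cdot$) together with enough information to reconstruct the tree shape. Concretely, I would set up a Prüfer-type or "parenthesis-word" bijection: a binary tree with $n$ leaves corresponds to a balanced string, and combined with the operation labels the whole optimal expression is determined by a word $w \in A^{n-1}$ for a suitable set $A$ with $|A| = n$; hence $|K_n| \le |A|^{n-1} = n^{n-1}$. The cleanest version: induct on $n$. For $n=1$, $K_1 = \{x\}$, and $1 \le 1^0 = 1$. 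For the inductive step, every optimal expression for $p \in K_n$ has a last operation $p = q \ast r$ with $\|q\| = a$, $\|r\| = b$, $a + b = n$, $a,b \ge 1$, $\ast \in \{+,\cdot\}$; so $|K_n| \le 2\sum_{a=1}^{n-1} |K_a|\,|K_{n-a}| \le 2\sum_{a=1}^{n-1} a^{a-1}(n-a)^{n-a-1}$, and I would bound this sum by $n^{n-1}$.

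The main obstacle is precisely that last inequality $2\sum_{a=1}^{n-1} a^{a-1}(n-a)^{n-a-1} \le n^{n-1}$ — it is reminiscent of (and should follow from) Abel's binomial identity / the Cayley formula recursion $\sum_a \binom{n}{a} a^{a-1}(n-a)^{n-a-1} = 2(n-1) n^{n-2}$, but here the binomial weights $\binom{n}{a}$ are absent, which actually makes our sum \emph{smaller}, so the bound should hold comfortably for $n$ large; the genuine work is checking it does not fail for small $n$ and writing the estimate cleanly (e.g. pairing the $a$ and $n-a$ terms, or bounding $a^{a-1}(n-a)^{n-a-1}$ against a single dominant term). If the elementary estimate is fussy, the fallback is the direct encoding argument: fix a canonical traversal of the expression tree and show the list of $n-1$ pairs (operation, "which previously-built subexpression is the left child") injects $K_n$ into a set of size $\le n^{n-1}$, since at the $i$th internal node there are $2$ operation choices and at most $\lceil n/2 \rceil$ relevant structural choices, and $2\lceil n/2\rceil \le n$ for $n \ge 2$. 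Either way the skeleton is: optimal expression $\Rightarrow$ bounded-size combinatorial code $\Rightarrow$ count the codes.
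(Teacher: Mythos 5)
Your main route is the same as the paper's: decompose by the last operation, get $|K_n|\le\sum|K_a||K_{n-a}|$, and induct. But as written there is a genuine gap, and you name it yourself: the closing inequality $2\sum_{a=1}^{n-1}a^{a-1}(n-a)^{n-a-1}\le n^{n-1}$ is never proved, only flagged as ``the main obstacle,'' with vague pointers to Abel's identity or term-pairing. The missing idea is a one-line weakening of the inductive hypothesis: since each $i\le n$, replace $|K_i|\le i^{i-1}$ by $|K_i|\le n^{i-1}$, so every product $|K_i||K_{n-i}|\le n^{i-1}n^{n-i-1}=n^{n-2}$ and the sum is $(n-1)n^{n-2}\le n^{n-1}$. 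That is the entire content of the paper's proof beyond the recursion. Note also that your recursion carries a spurious factor of $2$: the sum over all $a=1,\dots,n-1$ already counts each split in both orders (the paper writes $2(\,|K_1||K_{n-1}|+\cdots)$ over \emph{half} the range, which equals the full unweighted sum). This is not wrong as an upper bound, but it matters: with your extra $2$, the simple weakening above gives $2(n-1)n^{n-2}$, which exceeds $n^{n-1}$ for $n\ge 3$, so you would instead need the slightly sharper step $a^{a-1}(n-a)^{n-a-1}\le(n-1)^{n-2}$ together with $\bigl(1+\tfrac{1}{n-1}\bigr)^{n-1}\ge 2$ to close the induction. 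Closable, but not closed in your write-up.

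Your fallback encoding argument is likewise only a sketch: the claim that each internal node needs at most $\lceil n/2\rceil$ structural choices presupposes a canonicalization (e.g.\ always listing the smaller subtree first, and recording its leaf-count in a fixed traversal order) and an injectivity argument for the resulting code, neither of which you supply. Since the inductive route is two lines away from complete once you use $i^{i-1}\le n^{i-1}$ (and drop the double-counted factor of $2$), that is the fix to make.
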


\begin{proof}
    In order to find an upper bound on these complexity classes, we may simplify our counting and over estimate by treating it as if none of the combinations of elements are repeats. Using this, it can be seen that 
    \[
    |K_n| \leq 2\Bigl(|K_1|\cdot |K_{n-1}| + |K_2|\cdot |K_{n-2}| + \ldots + |K_{n/2}| \cdot |K_{n/2}| \Bigr )
    \]
    or $|K_{(n-1)/2}| \cdot |K_{(n+1)/2}|$ depending on the parity of $n$. Observe that the RHS can be rewritten as $\displaystyle \sum_{i=1}^{n-1} \Bigl( |K_i|\cdot |K_{n-i}| \Bigr)$.
    
    The proof will be conducted by induction. Firstly, observe that $|K_1| = 1 \leq 1^{1-1}$ as a base case.

    From here, we shall assume that for all $1 \leq i < n, |K_i| \leq i^{i-1}$. Then it follows that $|K_i| \leq n^{i-1}$. Then, we can substitute to find that 
    \[
    |K_n| \leq  \sum_{i=1}^{n-1} n^{i-1}\cdot   n^{n-i-1} = \sum_{i=1}^{n-1}  n^{n-2} =  (n-1)\cdot n^{n-2} \leq  n^{n-1}.
    \]
\end{proof}

Here we have proved the trivial bound. Admittedly, this is a very weak bound, considering that for the most part, in our experience, the size of complexity classes seem to grow exponentially with a fixed base. For example, letting $||n|| = c$ and using our upper and lower bounds of $\frac{3}{\ln 3} \ln n \leq c \leq \frac{3}{\ln 2} \ln n$ for the natural numbers, we can find that any $n$ with complexity $c$ must satisfy the inequality $2^{c/3} \leq n \leq 3^{c/3}$, giving us a restriction of $3^{c/3} - 2^{c/3} + 1$ on the size of the $c$-th complexity class.

We can attempt to take inspiration from this idea by utilizing the bounds on polynomials with complexity $c$. The following two lemmas, in particular, may be of use:

\begin{lemma} \label{lem:3.2}
    A polynomial of complexity $c$ must have a degree of at most $c$.
\end{lemma}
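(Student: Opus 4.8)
The plan is to prove Lemma~\ref{lem:3.2} by induction on the structure of an optimal expression for a polynomial $f$ of complexity $c$. The key observation is that the degree of a polynomial behaves \emph{subadditively} under the two operations we are allowed to use: if $f = g + h$ then $\deg f = \max(\deg g, \deg h)$, and if $f = g \cdot h$ then $\deg f = \deg g + \deg h$. In both cases the degree of the result is at most the sum of the degrees of the parts, which is exactly the way complexity also decomposes.

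First I would set up the induction on $c = \|f\|$. For the base case $c = 1$, the only polynomial of complexity $1$ is $x$ itself (by Observation~\ref{obs:basicPolyProps} and the definition of an expression), which has degree $1 \le 1$. For the inductive step, take $f$ with $\|f\| = c \ge 2$ and fix an optimal expression for it. Since $c \ge 2$, the outermost operation of this expression is either an addition or a multiplication, so $f = g + h$ or $f = g \cdot h$ where $g$ and $h$ are given by subexpressions with $\|g\| = a$, $\|h\| = b$, and $a + b = c$ (optimality of the expression for $f$ forces the subexpressions to be optimal for $g$ and $h$, and forces $a + b = c$ rather than $a + b > c$). By the inductive hypothesis, $\deg g \le a$ and $\deg h \le b$.

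Then I would conclude in the two cases: if $f = g + h$, then $\deg f \le \max(\deg g, \deg h) \le \max(a,b) \le a + b = c$; if $f = g \cdot h$, then $\deg f = \deg g + \deg h \le a + b = c$. Either way $\deg f \le c$, completing the induction.

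I do not expect any genuine obstacle here — the statement is essentially the polynomial-ring analogue of the trivial fact that $\|n\| \ge 3\log_3 n$ and its proof is purely structural. The only point requiring a little care is the justification that an optimal expression for $f$ restricts to optimal expressions on its immediate subexpressions with complexities summing to exactly $c$; this is immediate from the definition of complexity (if a subexpression could be shortened, the whole expression could be), but it is worth stating explicitly since the rest of the argument rests on it. One could alternatively phrase the whole thing as an induction directly on the length of the expression rather than on $c$, which avoids even mentioning optimality in the inductive step, but the version above keeps the bookkeeping aligned with how $c$ appears in the statement.
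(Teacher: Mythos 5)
Your proof is correct and amounts to the same idea as the paper's, which simply remarks that a polynomial of degree $d$ needs at least $d$ $x$'s multiplied together; your structural induction on the expression is just the formal version of that one-line observation.
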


\begin{proof}
    This is trivial, since any polynomial of degree $d$ must have at least $d$ $x$'s multiplied together.
\end{proof}

\begin{lemma}\label{lem:3.3}
    A polynomial of complexity $c$ whose sum of coefficients is $s$ satisfies $s \leq 3^{c/3}$.
\end{lemma}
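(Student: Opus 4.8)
The plan is to mirror the classical fact that the value of a natural number of complexity $c$ is at most $3^{c/3}$, observing that the sum of coefficients of a polynomial is exactly the value obtained by the substitution $x = 1$. Concretely, if $f(x) \in \mathbb{N}[x]$ is an expression of complexity $c$, then writing $f$ as an expression in $x$'s joined by $+$ and $\cdot$, the integer $f(1)$ is written by the same expression with every $x$ replaced by $1$; hence $f(1)$ is a natural number expressible with $c$ ones, so $||f(1)|| \leq c$ in the ordinary ($k=1$) sense. The sum of the coefficients of $f$ is precisely $f(1) = s$.

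First I would make this reduction explicit: the sum of coefficients $s$ equals $f(1)$, and any expression for the polynomial $f$ of length $c$ specializes under $x \mapsto 1$ to an expression for the integer $s$ of length $c$, so $||s|| \le c$. Next I would invoke the known result of Altman (\cite{Altman2012}), already cited in the excerpt, that for a natural number $m$ one has $m \le 3^{||m||/3}$ — equivalently $||m|| \ge 3\log_3 m$. Applying this to $m = s$ gives $s \le 3^{||s||/3} \le 3^{c/3}$, which is the claim. If one prefers a self-contained argument rather than quoting Altman, one can instead prove $s \le 3^{c/3}$ by a direct induction on the structure of the expression: the base case is a single $x$, giving $s = 1 = 3^{1/3 \cdot 1}$ (indeed $1 \le 3^{1/3}$); for a sum $f = g + h$ with $\deg$-of-expression-lengths $c_g + c_h = c$ one needs $3^{c_g/3} + 3^{c_h/3} \le 3^{(c_g+c_h)/3}$, and for a product one needs $3^{c_g/3}\cdot 3^{c_h/3} \le 3^{(c_g+c_h)/3}$ (this one is an equality); the additive inequality follows from the convexity/growth estimate that underlies why $3$ is the optimal base, valid since $c_g, c_h \ge 1$.

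The main obstacle, and the only place requiring care, is the additive step: one must check that $3^{a/3} + 3^{b/3} \le 3^{(a+b)/3}$ for all integers $a, b \ge 1$. This fails for $a = b = 1$ as a strict inequality in the wrong direction only if one is careless — in fact $3^{1/3} + 3^{1/3} = 2\cdot 3^{1/3} \approx 2.884 < 3 = 3^{2/3}\cdot 3^{0}$? No: $3^{2/3} \approx 2.08$, so this particular inequality is false. This is exactly why the clean route is to quote Altman rather than re-derive it: the statement $s \le 3^{c/3}$ is true (it is the integer-complexity bound), but the naive structural induction does not go through termwise, precisely because $3^{c/3}$ is not superadditive in $c$ for small $c$. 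So I would present the short argument: $s = f(1)$, hence $||s|| \le c$, hence $s \le 3^{||s||/3} \le 3^{c/3}$ by \cite{Altman2012}, and not attempt the direct induction.
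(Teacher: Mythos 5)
Your proposal is correct and follows essentially the same route as the paper: identify the coefficient sum with $f(1)$, specialize the length-$c$ expression at $x=1$ to get $\|s\|\le c$, and invoke the known bound that a natural number of complexity $c$ is at most $3^{c/3}$. Your added remark that a naive termwise structural induction fails at the additive step is a sensible caution, and rightly leads you back to quoting the standard integer-complexity result, just as the paper does.
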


\begin{proof}
    This can be seen from the fact that the sum of coefficients of a polynomial $f(x)$ is equal to $f(1)$. We can then treat this polynomial as a complexity expression. We know from regular integer complexity that the maximum number with complexity $c$ is $3^{c/3}$.
\end{proof}

While we are proving such restrictions, we can also verify the following upper bound on complexity, although we won't use it to find a restriction on the size of complexity classes:

\begin{lemma}\label{lem:deg}
    If $f(x)$ has degree $d, \, ||f(x)|| \leq f(1) + d - 1$
\end{lemma}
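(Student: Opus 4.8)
The plan is to prove the bound $\|f(x)\| \le f(1) + d - 1$ by induction on the degree $d$, peeling off the leading term via a Horner-style factorization. The base case $d = 0$ is a constant polynomial (in fact a natural number $n = f(1)$), and by regular integer complexity $\|n\| \le n = f(1) = f(1) + d - 1$ when $d = 1$ — but note that for $d = 0$ the polynomial is a constant and we should handle the genuinely polynomial base case $d = 1$ directly: if $f(x) = a_1 x + a_0$ with $a_1 \ge 1$, then $f(1) = a_1 + a_0$, and we can write $f(x)$ using $a_1$ copies of $x$ added together (if $a_0 = 0$) or $a_1 x + a_0 \cdot x \cdot(\text{something})$ — actually the cleanest base case is $d = 1$, $f(x) = a_1 x$, which has complexity exactly $a_1 = f(1) = f(1) + 1 - 1$, and more general degree-$1$ polynomials $a_1 x + a_0$ reduce to the inductive step below once we know the $d=0$ (constant) statement that $\|a_0\| \le a_0$.

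\textbf{Inductive step.} Assume the bound holds for all polynomials of degree less than $d$, and let $f(x) = a_d x^d + \cdots + a_1 x + a_0$ with $a_d \ge 1$ and $d \ge 1$. Write $f(x) = x \cdot g(x) + a_0$ where $g(x) = a_d x^{d-1} + \cdots + a_1$ has degree $d - 1$ and $g(1) = f(1) - a_0$. By the inductive hypothesis, $\|g(x)\| \le g(1) + (d-1) - 1 = f(1) - a_0 + d - 2$. Multiplying by one $x$ costs one more symbol, giving a representation of $x g(x)$ of size at most $f(1) - a_0 + d - 1$. Then we add $a_0$ copies of $x \cdots$ — more carefully, $a_0 = \|a_0\| \cdot$-many? — we should add $a_0$ as a natural number, costing $\|a_0\| \le a_0$ symbols, but we want the total to come out to $f(1) + d - 1 = (g(1) + a_0) + d - 1$. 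Adding the representation of $x g(x)$ (size $\le f(1) - a_0 + d - 1$) to a representation of the constant $a_0$ (size $\le a_0$) yields $\|f(x)\| \le f(1) - a_0 + d - 1 + a_0 = f(1) + d - 1$, as desired. The small subtlety when $a_0 = 0$ is that we spend no symbols on the constant term, and the bound still holds since $\|x g(x)\| \le f(1) + d - 1$ directly.

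\textbf{Main obstacle.} The only real care needed is bookkeeping at the boundary: ensuring the induction is anchored correctly (the $d = 0$ constant case uses $\|n\| \le n$ from ordinary integer complexity, and the degree-$1$ case is where the $+d-1$ term first becomes relevant), and handling degenerate sub-cases such as $g$ having some leading coefficients forcing its degree to actually be $d-1$ (which holds since $a_d \ge 1$), or $a_0 = 0$. I expect no genuine difficulty — this is a clean Horner's-method induction — but I would double-check that when $g(x)$ is itself a monomial $a_d x^{d-1}$ the inductive hypothesis gives $\|g\| \le a_d + d - 2$, which matches $\|a_d x^{d-1}\| \le \|a_d\| + (d-1) \le a_d + d - 2$ only when $\|a_d\| \le a_d - 1$; since $\|a_d\| \le a_d$ always and the $+(d-1)$ absorbs the slack for $d \ge 2$, the estimate goes through, but this is exactly the spot where the constant $-1$ in the statement is being used, so it merits an explicit sentence in the writeup.
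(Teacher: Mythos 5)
Your overall plan --- a Horner-style induction on the degree --- is the same idea as the paper's one-line proof, but the specific decomposition you use breaks down in this setting. In Section 3 a polynomial's complexity counts the $x$'s in an expression built only from $x$'s using $+$ and $\cdot$; there is no symbol $1$ available. Consequently a nonzero constant $a_0$ has no representation at all (every expression evaluates to a polynomial divisible by $x$), so the step ``add a representation of the constant $a_0$, costing $\|a_0\|\le a_0$'' invokes ordinary integer complexity where it does not apply. Relatedly, the polynomials to which the lemma applies all have zero constant term, so in your decomposition $f(x)=x\,g(x)+a_0$ you in fact always have $a_0=0$, and then $g(x)=a_dx^{d-1}+\dots+a_2x+a_1$ generally has a nonzero constant term $a_1$; the inductive hypothesis therefore cannot be applied to $g$, since $\|g\|$ is undefined for such $g$. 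As written, the induction steps outside the class of representable polynomials at the very first reduction.

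The repair is small and lands exactly on the paper's argument: peel off the linear coefficient rather than a constant. Write $f(x)=a_1x+\dots+a_dx^d$ as $(x+\dots+x)+x\,h(x)$ with $a_1$ summands $x$ and $h(x)=a_2x+a_3x^2+\dots+a_dx^{d-1}$, which again has natural coefficients and no constant term (with the obvious adjustments when $a_1=0$ or $h=0$). The base case $f=a_1x$ costs $a_1=f(1)+1-1$; inductively $\|h\|\le h(1)+(d-1)-1=f(1)-a_1+d-2$, so $\|f\|\le a_1+1+\|h\|\le f(1)+d-1$. Unrolling this recursion is precisely the ``base $x$'' Horner representation the paper writes down: $f(1)$ $x$'s for the coefficients and $d-1$ nested multiplications by $x$.
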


\begin{proof} This result can be obtained by writing $f(x)$ in ``base $x$'' using Horner's method. For example: \[
3x^4 + 2x^3 + 4x^2 + 2x = x + x + x\left( x + x + x + x + x\left( 
x + x + x\left(  x + x + x \right) \right) \right).
\]
Recall that $f(1)$ is equal to the sum of the coefficients.
\end{proof}

\begin{remark}
Note that using Horner's algorithm shows that complexity in terms of $x$ is defined for all polynomial functions of $x$ with positive integral coefficients that doesn't have a constant term.
\end{remark}

Now, we can use the lemmas \ref{lem:3.2} and \ref{lem:3.3} to find a restriction on how many polynomials could have a complexity of $n$. That process leads us to the following bound:

\begin{prop}
    Let $K_n$ be the class of polynomials with complexity $n$. Then 
    \[\sum_{j=1}^{n}|K_j| \leq \sum_{i=1}^{\lceil 3^{n/3}\rceil} \binom{i+ n}{n}.\]
\end{prop}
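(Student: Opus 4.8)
The plan is to interpret the left-hand side combinatorially and then overcount crudely. The sum $\sum_{j=1}^{n}|K_j|$ counts exactly once each polynomial whose complexity is some $j$ with $1 \le j \le n$, since the complexity classes $K_j$ are pairwise disjoint (complexity is a well-defined function of the polynomial). So it suffices to produce an injection from the set of all polynomials of complexity at most $n$ into a set whose size is the claimed right-hand side; the identity map will do, once we identify the right target set.

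First I would pin down the constraints satisfied by any polynomial $f$ of complexity $c \le n$. By Lemma \ref{lem:3.2}, $\deg f \le c \le n$. Since $f$ arises from an expression (a sum of products of $x$'s), it has non-negative integer coefficients and no constant term; in particular $f$ is nonzero, so $1 \le f(1)$. By Lemma \ref{lem:3.3}, $f(1) \le 3^{c/3} \le 3^{n/3} \le \lceil 3^{n/3}\rceil$, and since $f(1)$ is an integer, $f(1) \le \lceil 3^{n/3}\rceil$. Hence every polynomial counted on the left lies in the set
\[
T \;=\; \Bigl\{\, a_0 + a_1 x + \cdots + a_n x^n \;:\; a_j \in \mathbb{Z}_{\ge 0},\ 1 \le \textstyle\sum_{j=0}^n a_j \le \lceil 3^{n/3}\rceil \,\Bigr\},
\]
where I have deliberately allowed a constant term and dropped the ``nonzero'' refinement, since overcounting is harmless for an upper bound.

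Next I would count $T$ by partitioning according to the value $i = \sum_{j=0}^n a_j$. For a fixed $i$, the number of tuples $(a_0,\dots,a_n)$ of non-negative integers summing to $i$ is the stars-and-bars count $\binom{i + (n+1) - 1}{(n+1)-1} = \binom{i+n}{n}$. Summing over $i$ from $1$ to $\lceil 3^{n/3}\rceil$ gives $|T| = \sum_{i=1}^{\lceil 3^{n/3}\rceil}\binom{i+n}{n}$, and therefore $\sum_{j=1}^{n}|K_j| \le |T|$, which is the proposition.

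There is no real obstacle here: the only points requiring care are that the complexity classes are disjoint (so the left side is a genuine cardinality of a set of polynomials, not a multiset count), that the two lemmas are applied with $c \le n$ rather than $c = n$, and that the ceiling in Lemma \ref{lem:3.3}'s conclusion is handled by integrality of $f(1)$. One could tighten the result by insisting on no constant term (replacing $\binom{i+n}{n}$ with $\binom{i+n-1}{n-1}$) and then applying the hockey-stick identity, but that strengthening is not what is claimed, so I would keep the simpler overcount.
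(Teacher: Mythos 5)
Your proposal is correct and follows essentially the same route as the paper: bound the degree by Lemma \ref{lem:3.2} and the coefficient sum by Lemma \ref{lem:3.3}, then count the admissible coefficient tuples by stars and bars, summing over the coefficient sum $i$. The only difference is cosmetic — you justify the $\binom{i+n}{n}$ count by explicitly allowing a constant term and you handle the ceiling via integrality of $f(1)$, points the paper glosses over.
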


\begin{proof}
    Any polynomial with a complexity up to $n$ can have at most degree $n$ and sum of coefficients $3^\frac{n}{3}$. Therefore, in order to find the number of polynomials with positive integral coefficients and no constant term, we need to find the number of ways to ``distribute'' $i$ ``ones'' among $n$ coefficients for $i \leq 3^{n/3}$. As we range $i$ from 1 to $\leq 3^{n/3}$, we end up counting all the different polynomials that we could obtain. Using the stars and bars method, we get a maximum of $\binom{i + n}{n}$ possible polynomials and we need to sum all the possible $i$'s. 
\end{proof}

Admittedly, there are many setbacks to this method that makes it a weak bound. It severely overcounts possible polynomials. For example, the only polynomial with degree $n$ and complexity $n$ is $x^n$ itself, but our method considers all $n$th degree polynomials with coefficients summing to at most $3^\frac{n}{3}$.

Furthermore, it turns out that for sufficiently large $n$, this bound is weaker than $\sum_{i=1}^ni^{i-1}$, our previous bound. Verifying this will be left as an exercise to the reader.

Meanwhile, we can much more effectively attempt to create a lower bound for $|K_n|$ by considering a modified version of complexity that is easier to count:

\begin{definition}
    Let the \textbf{modified complexity} of a polynomial $f(x) \in S$ be the number of $x$'s it takes to write $f(x)$. $S$ is defined as the following 
    \begin{itemize}
        \item $x \in S$. 
        \item If $f(x) \in S$, then $f(x) + x \in S$
        \item If $f(x) \in S$, then $f(x) \cdot x \in S$
    \end{itemize}
    Denote the modified complexity of $f(x)$ as $|||f(x)|||$. \\
    Note $S$ is a set of expressions, not a set of polynomials. They look similar, but they are not the same. 
\end{definition}

For example, 
\[
x(x \cdot x + x \cdot x + (x + x + x)) 
\] 
is not a valid modified complexity expression because it has a non-nested multiplication (namely two $x\cdot x$'s), but 
\[
x(x + x + x + x(x + x))
\]
is. Though they evaluate to be the same polynomial, the latter is in a valid form while the former is not. 

\begin{remark}
    Following from the definition, we can see that any expression of modified complexity is equivalent to Horner's method, which only adds or multiplies $x$'s to expressions, but does not add or multiply together two subexpressions that both have complexity greater than 1.
\end{remark}

\begin{lemma}
    Every polynomial with positive integral coefficients and no constant term has a unique modified complexity expression.
\end{lemma}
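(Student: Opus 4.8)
The plan is to show existence and uniqueness separately, both by induction on the modified complexity (equivalently, on the number of $x$'s written). Existence is essentially Lemma~\ref{lem:deg}: given a polynomial $f(x)$ with positive integral coefficients and no constant term, Horner's method produces a modified-complexity expression for it, since each step of Horner's method is either ``add $x$'' or ``multiply by $x$,'' which are exactly the two operations allowed in building elements of $S$. So the real content is uniqueness.

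For uniqueness, I would argue as follows. Suppose $f(x)$ has a modified-complexity expression $E$. Look at the outermost (last) operation in $E$. If $E = E' + x$, then $f(x) = g(x) + x$ where $g(x)$ is the polynomial of $E'$; if $E = E' \cdot x$, then $f(x) = g(x) \cdot x$. The key claim is that which of these two cases occurs is \emph{forced} by $f$ itself, not by the choice of expression: the last operation is a multiplication by $x$ if and only if $f(x)$ is divisible by $x$ \emph{and} the constant term of $f(x)/x$ is zero — i.e. $f(x) = x \cdot g(x)$ with $g$ still having no constant term — whereas the last operation is ``$+x$'' exactly when $f(x) - x$ still has positive integral coefficients and no constant term, which happens precisely when the coefficient of $x^1$ in $f$ is positive. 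One has to check these two conditions are mutually exclusive and exhaustive for any $f$ with positive integral coefficients and no constant term: if the coefficient of $x$ is positive, the last step must be $+x$ (you cannot have just multiplied by $x$, since $f/x$ would then have a nonzero constant term, contradicting that $f/x \in S$ has no constant term); if the coefficient of $x$ is zero, then $f = x g$ with $g \in S$ again constant-term-free, so the last step must be a multiplication. In either case the ``predecessor'' polynomial $g$ is uniquely determined by $f$, has strictly smaller modified complexity (one fewer $x$), and still lies in the class of positive-integral-coefficient, constant-term-free polynomials, so the inductive hypothesis applies to give a unique expression for $g$, hence a unique expression for $f$.

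The base case is $f(x) = x$, whose only expression is $x$ itself (modified complexity $1$). The main obstacle — or rather the one place requiring care rather than routine bookkeeping — is verifying the dichotomy cleanly: one must use the defining property of $S$ that every element is reached from $x$ by the two operations, so that in the case $E = E' \cdot x$ the subexpression $E'$ is itself in $S$ and therefore its polynomial has no constant term, which is what makes ``coefficient of $x$ in $f$ equals $0$'' the correct characterization of the multiplicative case. Once that is pinned down, the induction closes immediately. A small auxiliary point worth stating explicitly: the modified complexity is well-defined (finite) on exactly this class of polynomials, which is already noted in the remark following Lemma~\ref{lem:deg}, so the induction on ``number of $x$'s'' is legitimate.
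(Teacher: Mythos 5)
Your proposal is correct, and for the uniqueness half it takes a genuinely more hands-on route than the paper. The paper's proof is short: existence comes from Horner's method (each Horner step is ``add $x$'' or ``multiply by $x$''), and uniqueness is obtained by invoking the preceding remark that every modified-complexity expression is equivalent to a Horner's-method expression, together with the uniqueness of the base-$x$ expansion of a polynomial. You instead prove uniqueness directly by induction on the number of $x$'s, showing that the outermost operation is forced by $f$ itself: if the coefficient of $x^1$ in $f$ is positive the last step must be $+x$ (a final multiplication would force $f/x$ to have a nonzero constant term, impossible for an element of $S$), while if that coefficient is zero the last step must be $\cdot x$ (a final $+x$ would give a predecessor with a negative coefficient), and in either case the predecessor polynomial is uniquely determined and the induction closes. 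In effect you are supplying a self-contained proof of exactly the identification ``modified-complexity expression $=$ base-$x$ (Horner) expression'' that the paper asserts via its remark, so your argument is more elementary and more rigorous at the one point where the paper leans on an informal equivalence; the paper's version is shorter because it reuses that remark and the standard uniqueness of base-$x$ digits. One cosmetic caveat: your dichotomy implicitly uses that elements of $S$ are nonzero polynomials with nonnegative integer coefficients and no constant term (so ``positive integral coefficients'' in the statement should be read as allowing zero coefficients in intermediate degrees); this is preserved by both operations and is worth one explicit sentence, but it is the same reading the paper itself uses.
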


\begin{proof}
    We already know that Horner's method is able to express all polynomials with positive integral coefficients and no constant term in terms of $x$. Therefore, we can conclude that every such polynomial has a modified complexity expression, which is equivalent to the Horner's method expression. Furthermore, since Horner's method amounts to writing a polynomial in base $x$, it follows that there is only one modified complexity expression for each polynomial since there is a unique base $x$ expression for each polynomial.
\end{proof}

\begin{lemma}
    Let $K'_n$ be the $n$-th modified complexity class. Then 
    \[
    |K'_n| = 2^{n-1}
    \]
\end{lemma}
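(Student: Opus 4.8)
The plan is to set up a bijection between the $n$-th modified complexity class $K'_n$ and the set of binary strings of length $n-1$, which immediately gives $|K'_n| = 2^{n-1}$. The key observation is that a modified complexity expression is built up from a single $x$ by a sequence of operations, each of which is either ``add $x$'' or ``multiply by $x$.'' Starting from the base expression $x$ (which uses one $x$), every subsequent $x$ we introduce corresponds to exactly one of these two operations. Since an expression of modified complexity $n$ uses exactly $n$ copies of $x$, there are $n-1$ operations performed after the initial $x$, and each is a binary choice. So I would define a map sending an expression to the string $(o_1, o_2, \ldots, o_{n-1}) \in \{+, \cdot\}^{n-1}$ recording, in order, the operations applied.

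The main steps are: (1) Show the map is well-defined, i.e., every expression in $S$ with modified complexity $n$ arises from a \emph{unique} sequence of $n-1$ operations applied to the starting $x$. This follows from the recursive definition of $S$: each element is either $x$ itself, or obtained from a smaller element by one of the two rules, so unwinding the construction yields a canonical operation sequence (uniqueness here is essentially the content of the preceding lemma that each polynomial has a unique modified complexity expression, combined with the fact that the construction tree of an element of $S$ is a path, not a branching tree — there is never a choice of \emph{which} subexpression to combine). (2) Show the map is injective: two different operation sequences produce different expressions, which is clear since the expression literally records its own build history (different histories give syntactically different expressions). (3) Show the map is surjective: any sequence in $\{+, \cdot\}^{n-1}$, applied to $x$, produces a valid element of $S$ with modified complexity exactly $n$, since each operation adds exactly one $x$ and keeps us inside $S$ by the closure rules. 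Counting the codomain gives $2^{n-1}$.

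The step I expect to be the main obstacle — or at least the one requiring the most care — is nailing down precisely that the ``build history'' is unique, i.e., that distinct expressions in $S$ cannot have the same operation sequence and that each expression has only one operation sequence. One has to be careful that $S$ is a set of \emph{expressions} (formal syntactic objects), not polynomials, so the ambiguity that plagued the unrestricted complexity (where $2x+2x$ and $x+3x$ both give $4x$) does not arise here: the restriction that we only ever add or multiply a bare $x$ forces the expression to be in a rigid, left-associated normal form, so the syntax tree is literally a caterpillar/path and reading off the operations along the spine is unambiguous. Once that rigidity is established cleanly, the bijection and the count $2^{n-1}$ are immediate.
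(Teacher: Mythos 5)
Your proposal is correct, and it takes a slightly different route from the paper: you count $K'_n$ by an explicit bijection with the $2^{n-1}$ operation sequences in $\{+,\cdot\}^{n-1}$ (essentially the ``shorthand list'' encoding the paper only introduces later), whereas the paper argues by induction, producing from each $f(x) \in K'_n$ the two elements $f(x)+x$ and $f(x)\cdot x$ of $K'_{n+1}$ and using well-ordering to check that every element of $K'_{n+1}$ arises by undoing its last operation, so that $|K'_{n+1}| = 2|K'_n|$. The two arguments are close cousins (your bijection is the closed form of the paper's doubling), and your phrasing handles the ``every element is reached'' step more transparently; the paper's induction, on the other hand, matches how the classes are actually generated and needs no explicit encoding. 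One caution about where the weight of your argument must rest: $K'_n$ is a class of \emph{polynomials}, not of expressions, so the rigidity of the caterpillar-shaped syntax tree is not by itself enough --- two syntactically distinct operation sequences could a priori evaluate to the same polynomial, and the polynomial produced by a length-$n$ sequence could a priori admit a shorter modified expression, which would push it into an earlier class. Both possibilities are excluded precisely by the preceding lemma that every admissible polynomial has a unique modified complexity expression (base-$x$/Horner uniqueness), which you do cite; just be aware that your injectivity and surjectivity steps (2) and (3) lean on that lemma, not merely on syntactic distinctness, and this is the same place the paper's induction invokes it.
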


\begin{proof}
    Trivially, the only member of the first modified complexity class is $x$, so $K'_1 = 1$.
    
    Next, assume that $K'_n = 2^{n-1}$. From each polynomial $f(x)$ in that complexity class, we get two polynomials in $K'_{n+1}$: $f(x) + x$ and $f(x) \cdot x$. From the above lemma, we know that each one of these polynomials is unique, so there is no possibility of any of them having appeared in any earlier complexity class.
    
Meanwhile, FTSOC suppose there was a polynomial of modified complexity $n+1$ not in this list. By WOP, let $n+1$ be the smallest such. But then, that polynomial can be reduced to a polynomial of complexity $n$ by just undoing the last operation in the expression. Therefore, we can be sure that the $2^n$ polynomials in $K'_{n+1}$ are the only polynomials in that modified complexity class.

Thus, by induction, $K'_n = 2^{n-1}$.
\end{proof}

With this in mind, we have enough to find a bound on the cumulative sum of the sizes of the first $n$ complexity classes, although this doesn't imply a bound on the exact number of elements per complexity class.

\begin{prop} \label{prop:modifiedlowerboundcompclass}
For $n \geq 1$,
\[
2^n - 1 \leq \sum_{i = 1}^{n}|K_n|
\]
\end{prop}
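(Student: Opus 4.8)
The plan is to relate the (ordinary) complexity of a polynomial to its modified complexity. The key observation is that any modified-complexity expression is, in particular, a perfectly valid ordinary complexity expression: it builds a polynomial out of $x$'s using only $+$ and $\cdot$. Hence for every polynomial $f$ in the set $S$, its ordinary complexity $\|f\|$ is at most its modified complexity $|||f|||$. In other words, if $f$ has modified complexity $n$, then $\|f\| \le n$, so $f$ belongs to $K_j$ for some $j \le n$. This gives a map from the $n$-th modified complexity class $K'_n$ into $\bigcup_{j=1}^n K_j$.

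The first step is to make this map well-defined and injective as a map on the \emph{polynomials} (not expressions): by the uniqueness lemma, each polynomial with positive integral coefficients and no constant term has exactly one modified-complexity expression, so the polynomials appearing across $K'_1, \dots, K'_n$ are pairwise distinct, and there are $\sum_{i=1}^n |K'_i| = \sum_{i=1}^n 2^{i-1} = 2^n - 1$ of them. The second step is to note that each such polynomial, having modified complexity $i \le n$, has ordinary complexity at most $i \le n$, hence lies in exactly one class $K_j$ with $1 \le j \le n$. Therefore the $2^n - 1$ distinct polynomials coming from modified complexity classes $1$ through $n$ all sit inside $\bigsqcup_{j=1}^n K_j$, which immediately yields $2^n - 1 \le \sum_{j=1}^n |K_j|$. (One should write the right-hand side as $\sum_{i=1}^n |K_i|$ to match the statement, correcting the apparent typo $|K_n|$ in the summand.)

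The only mild subtlety — and the step I would be most careful about — is the passage from expressions to polynomials: two different modified-complexity expressions always evaluate to different polynomials (that is exactly the uniqueness lemma), and a modified-complexity expression of a polynomial $f$ is a legitimate ordinary-complexity expression for the same $f$, so the complexity bound transfers. There is nothing deep here; it is just bookkeeping to ensure we are counting distinct polynomials and not overcounting. No other result beyond the two lemmas on modified complexity ($|K'_n| = 2^{n-1}$ and uniqueness of the modified expression) is needed, and the geometric sum $\sum_{i=1}^n 2^{i-1} = 2^n - 1$ finishes the argument.
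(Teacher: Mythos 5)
Your proof is correct, and it uses the same core ingredients as the paper: modified-complexity expressions are ordinary expressions (so $\|f\| \le |||f|||$), each polynomial has a unique modified expression, $|K'_i| = 2^{i-1}$, and a geometric sum. The bookkeeping differs slightly, and in your favor. The paper's printed proof argues class-by-class, asserting $|K'_n| \le |K_n|$ on the grounds that every element of $K'_n$ has complexity at most $n$; as literally stated this is imprecise, since a polynomial of modified complexity $n$ may have ordinary complexity strictly less than $n$ and hence lie in $K_j$ for some $j<n$ rather than in $K_n$, so the claimed inclusion $K'_n \subset K_n$ does not follow from the reason given (it only yields $K'_n \subset \bigcup_{j\le n} K_j$). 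Your aggregated argument — take all $\sum_{i=1}^n |K'_i| = 2^n-1$ polynomials from modified classes $1$ through $n$, note they are pairwise distinct by uniqueness, and place them inside $\bigsqcup_{j=1}^n K_j$ — sidesteps this entirely and proves exactly the cumulative inequality the proposition states. So your route is the cleaner one for this statement; the paper's class-by-class comparison, if it could be fully justified, would give the stronger conjectured bound $2^{n-1} \le |K_n|$ (their Conjecture on per-class sizes), but the justification offered does not establish it.
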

\begin{proof}
First note that $|K'_n| \leq |K_n|$, since given an element of $K'_n$, it has complexity less than or equal to $n$, as the modified complexity express is still an expression, it just may not be optimal. Thus every element of $K'_n \subset K_n$. Thus the size of $K_n$ is greater than or equal to the size of $K'_n$. Since the size of $K'_n$ equals $2^{n-1}$, our hypothesis is easily obtained with a geometric sum.

\end{proof}

Even though we cannot find a bound for each of the complexity classes by this method, we can still conjecture one:

\begin{conj} \label{conj:2lowerbound}
For $n \geq 1$,
\[
2^{n-1} \leq |K_n|
\]
\end{conj}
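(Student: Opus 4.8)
The plan is to strengthen the argument behind Proposition \ref{prop:modifiedlowerboundcompclass}, which only extracts the cumulative bound $2^n - 1 \le \sum_{i=1}^n |K_i|$. The difficulty with that proposition is that elements of $K'_n$ (modified-complexity-$n$ expressions) need not actually lie in $K_n$: their true complexity could be strictly smaller. So the first step is to understand exactly how much "collapse" can occur. Concretely, I would define a map $\Phi\colon K'_n \to \bigcup_{i \le n} K_i$ sending each Horner expression to the polynomial it represents, and ask: for a fixed polynomial $f$ of (true) complexity $c \le n$, how many modified-complexity-$n$ expressions can map to $f$? If I can show this fiber has size at most $2^{n-c}$, then since $|K'_n| = 2^{n-1}$ I would get $\sum_{i \le n} |K_i| \cdot 2^{n-i} \ge 2^{n-1}$ — still only cumulative. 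To get the per-class bound I instead want an \emph{injection} $K'_{n-1}' \hookrightarrow K_n$ of some flavor, or a weight-preserving bijection argument.

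A cleaner route: build an explicit injection $g\colon \{0,1\}^{n-1} \to K_n$ whose image consists of polynomials genuinely of complexity $n$. The natural candidate is a restricted class of Horner expressions that are provably optimal. For instance, consider expressions of the form $x \cdot (x^{e_1}(x^{e_2}(\cdots))) + \text{(small corrections)}$ — but more promising is to use polynomials whose coefficient sum $f(1)$ is forced to be large enough that Lemma \ref{lem:3.3}-type reasoning pins down the complexity. Specifically, among the $2^{n-1}$ Horner expressions of length $n$, pick those that perform roughly $\log_2$-many "$+x$" steps clustered to make $f(1)$ as large as possible given the shape; if $f(1) > 3^{(n-1)/3}$ then by Lemma \ref{lem:3.3} we'd have $||f|| \ge n$ forcing equality (since $||f|| \le n$ trivially). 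The combinatorial challenge is that there aren't enough such "heavy" expressions — only polynomially many in $n$ have $f(1)$ near the max. So this idea alone undercounts.

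The most plausible full proof mirrors the modified-complexity induction but works directly with $K_n$: show that for each $f \in K_{n-1}$, \emph{at least one} of $f + x$ and $f \cdot x$ lies in $K_n$ (not merely in $K_{\le n}$), and that one can do this consistently so the resulting assignment $K_{n-1} \to K_n$ is injective. Injectivity would need a tie-breaking rule: e.g. always take $f \cdot x$ when $||f \cdot x|| = n$, since multiplication by $x$ raises degree and is "rarely" wasteful, and fall back to $f + x$ otherwise; then recover $f$ from its image by stripping a trailing $\cdot x$ or a trailing $+ x$. The main obstacle — and the reason this is only a conjecture — is precisely proving the complexity actually increases: it is conceivable that both $f+x$ and $f \cdot x$ have complexity $\le n-1$ for some $f \in K_{n-1}$ (analogous to how $4x = 2x+2x = x+3x$ creates collapses in the integer/polynomial setting), or that the map fails injectivity because distinct $f, f'$ get sent to the same polynomial via different last operations. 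Controlling these collisions, likely via a careful analysis of optimal representations and degree/coefficient-sum invariants (Lemmas \ref{lem:3.2} and \ref{lem:3.3}), is the crux; I would expect partial results (e.g. the bound holds along a subsequence of $n$, or holds for polynomials of bounded degree) to be attainable while the general statement remains open.
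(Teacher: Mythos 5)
This statement is left as a conjecture in the paper: the authors only prove the cumulative bound $2^n - 1 \le \sum_{i=1}^{n} |K_i|$ (Proposition \ref{prop:modifiedlowerboundcompclass}) via the modified complexity classes $K'_n$, and explicitly say they cannot obtain the per-class bound by that method. Your proposal, as you yourself acknowledge, does not close the gap either, so it is not a proof. Concretely: in your first approach the fiber analysis is moot, because by the paper's uniqueness lemma every polynomial has a unique modified complexity expression, so the evaluation map from $K'_n$ to polynomials is injective; the only ``collapse'' is that the true complexity of such a polynomial can be strictly less than $n$, which scatters the $2^{n-1}$ polynomials across $K_1,\dots,K_n$ and yields exactly the cumulative bound already in the paper, nothing more. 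Your second idea (forcing $f(1) > 3^{(n-1)/3}$ so that Lemma \ref{lem:3.3} pins the complexity) you correctly discard as undercounting, since only a vanishing fraction of length-$n$ Horner expressions are that ``heavy.''

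Your third route, an injection $K_{n-1} \hookrightarrow K_n$ via $f \mapsto f+x$ or $f \mapsto f\cdot x$, is the natural attack, but both of its pillars are unproven. First, you need that for every $f \in K_{n-1}$ at least one of $f+x$, $f\cdot x$ has complexity exactly $n$, and nothing in the paper rules out that both drop to complexity $\le n-1$ (the analogue of the $4x = 2x+2x = x+3x$ collapses). Second, the tie-breaking rule you sketch does not give injectivity as stated: a single polynomial $h$ (with $x^2 \mid h$) can arise both as $g \cdot x$ with $g = h/x$ and as $f + x$ with $f = h - x$, where $f$ was routed to ``$+x$'' only because $f\cdot x$ was cheap; from $h$ alone you cannot tell which preimage to strip, so distinct $f,g \in K_{n-1}$ may collide. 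Controlling these two failure modes is precisely the open content of the conjecture, so your honest assessment --- partial results perhaps, general statement open --- matches the state of the paper.
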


In addition to this, plotting the first fourteen values of $|K_n|$ supports the conclusion that the class size grows exponentially, approximately. We can also gain insight by looking at the ratios $\dfrac{|K_n+1|}{|K_n|}$:

\begin{table}[H]
\centering
\captionsetup{justification=centering,margin=1cm}
    \begin{tabular}{|c|c||c|c|}
        \hline 
        $n$ & $\dfrac{|K_{n+1}|}{|K_n|}$ & $n$ & $\dfrac{|K_n+1|}{|K_n|}$\\
        \hline
         1 & 2      &    7 & 2.423\\
         2 & 2      &    8 & 2.460\\
         3 & 2.25   &    9 & 2.506\\
         4 & 2.111  &    10 & 2.553\\
         5 & 2.368  &    11 & 2.591\\
         6 & 2.311  &    12 & 2.627\\  
        \hline
    \end{tabular}
\end{table}

If this sequence of terms ends up converging to a constant value, the class size function will approach an exponential function, and this exponential function could be used as an upper bound. It is not unreasonable to guess that perhaps this is the case, and it may even be possible that the constant in question is $e$.

\begin{conj} \label{conj:eUpperbound}
    For $n \geq 1$,
\[
e^{n-1} \geq |K_n|
\]
\end{conj}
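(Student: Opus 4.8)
The plan is to bound $|K_n|$ by controlling how complexity classes grow, but crucially to exploit the \emph{overcounting} in the recursion $|K_n| \le \sum_{i=1}^{n-1} |K_i|\cdot|K_{n-i}|$ so that the crude product bound degrades from base $2$ (which would give $2^{n}$-type growth, roughly consistent with the conjecture) down to base $e$. The first step is to observe that the conjectured bound $|K_n|\le e^{n-1}$ is equivalent to $|K_n|/|K_{n-1}|$ staying below $e$ on average, and that the data (ratios creeping past $2.6$) suggests this is tight-ish; so a soft argument that merely proves ``growth rate $\le$ some constant'' is not enough — we need the constant to be exactly $e$ (or smaller). I would therefore look for a generating-function or Euler-product reformulation: let $F(z) = \sum_{n\ge 1} |K_n| z^n$; the self-convolution recursion gives $F(z) \le z + 2F(z)^2$ (the factor $2$ from the two operations $+,\cdot$, minus diagonal corrections), whose dominant singularity is at $z = 1/8$, giving only $|K_n| \lesssim 8^n$ — far too weak. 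So the product recursion alone cannot work, and the whole point must be to account for repeats systematically.

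The key structural input I would bring in is the \emph{modified complexity} machinery already developed: $|K'_n| = 2^{n-1}$, and every expression decomposes (not uniquely) as a Horner-like ``backbone'' together with side-expressions grafted on at multiplication nodes. The idea is to encode an optimal expression of complexity $n$ as a rooted binary tree with $n$ leaves where internal nodes are labeled $+$ or $\times$, but then quotient by the obvious symmetries (commutativity of $+$ and $\times$, associativity, and the algebraic identities like $ax+bx=(a+b)x$ that cause the repeats noted after Proposition~\ref{prop:upperboundCompclass}). The number of such trees before quotienting is roughly the Catalan-times-$2^n$ count; the hoped-for phenomenon is that the identities collapse this by a factor growing like $(8/e)^n$, i.e. that the number of \emph{distinct polynomials} realized is $\le e^{n-1}$. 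Concretely I would try to set up an injection from $K_n$ into a set of combinatorial objects of size $e^{n-1}$: perhaps into sequences, or into the ``ordered'' version of something counted by $n^{n-1}$ / $e^{n}$-type Cayley-like formulas — the appearance of $n^{n-1}$ in the trivial bound and $e^{n-1}$ in the conjecture is suggestive, since $n^{n-1} \approx n!\, e^{n}/(\sqrt{2\pi n}\, n)$, so a factor-of-$n!$ (unordering) improvement lands near $e^{n}$.

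A more promising concrete route: induct with a \emph{weighted} count. Suppose one can show $|K_n| \le c_n$ where $c_n$ satisfies $c_n = \sum_{i=1}^{n-1} c_i c_{n-i} - (\text{explicit repeat count})$ and prove the repeat count is at least $\sum c_i c_{n-i} - e\cdot c_{n-1}$; then $c_n \le e\, c_{n-1}$ closes the induction. The repeats to quantify are: (i) sums $ax + bx$ with $a+b$ fixed — these contribute a quadratic-in-$n$ family of collisions at each level, (ii) more importantly, the fact that in an optimal expression a subexpression equal to $x$ can be merged, so any expression containing an explicit ``$+x$'' at the top is double-counted across the decomposition. The hard part — and I expect this to be the real obstacle — is that proving the repeat count is \emph{large enough} requires understanding the multiplicative structure of $\mathbb{Z}[x]$ well enough to lower-bound collisions, and collisions in integer/polynomial complexity are notoriously hard to control (this is exactly why even for ordinary integer complexity the class sizes are not known asymptotically). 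So I would expect this proof to stall precisely where ordinary integer complexity stalls: we can get \emph{an} exponential upper bound with some base, but pinning the base down to $e$ (rather than $8$, or $4$, or $3$) seems to need a genuinely new idea about why the identities in $\mathbb{N}[x]$ conspire to kill almost all of the naive tree count. Absent that, I would settle for proving $|K_n| \le C^n$ for the smallest $C$ I can manage and leave the sharp constant $e$ as the stated conjecture.
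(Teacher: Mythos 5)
You have not proved the statement, and neither does the paper: Conjecture~\ref{conj:eUpperbound} is stated there purely on numerical grounds (the ratios $|K_{n+1}|/|K_n|$ computed up to $n=13$, which are still increasing at $2.627$, together with the guess that their limit might be $e$), so there is no paper proof to match your approach against. Your proposal is honest about this — you end by saying you would settle for $|K_n|\le C^n$ for some $C$ and leave the sharp constant as a conjecture — but that means the essential content of the statement is exactly what your sketch does not deliver. The pivotal step in your ``weighted count'' route, namely showing that the number of repeats at level $n$ is at least $\sum_i c_i c_{n-i} - e\,c_{n-1}$, is not an argument: it is a verbatim restatement of the inequality $c_n \le e\,c_{n-1}$ that you are trying to prove. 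The collisions you list (merging $ax+bx$, absorbing a top-level ``$+x$'') are real but give only polynomial-in-$n$ or single-exponential savings with no identified mechanism forcing the surviving count below base $e$; and the numerological link $n^{n-1}\approx n!\,e^n/(\sqrt{2\pi n}\,n)$ carries no logical force, since the paper's $n^{n-1}$ bound in Proposition~\ref{prop:upperboundCompclass} arises from a crude induction on the convolution inequality, not from an ordered count of the same objects that one could ``divide by $n!$.''

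Two further cautions. First, even the empirical support is inconclusive: the ratio sequence $2.423, 2.460, \ldots, 2.627$ is still climbing with increments near $0.04$, and nothing in the data rules out its limit exceeding $e\approx 2.718$ (or the sequence $|K_n|e^{-(n-1)}$ eventually exceeding $1$), so any proof strategy must supply a structural reason for the constant $e$ rather than calibrate to the table. Second, your generating-function remark, while a useful sanity check that the bare convolution bound caps out around base $8$ (or base $4$ without the factor $2$), points at the true obstacle you yourself name: lower-bounding collisions among values of expressions in $\mathbb{N}_0[x]$ is the same kind of problem that blocks asymptotics for ordinary integer complexity classes. Absent a new idea there — for instance, a canonical-form argument that injects $K_n$ into an explicitly counted family of normal forms of size at most $e^{n-1}$ — the statement remains a conjecture, and your writeup should present the sketch as a program, not a proof.
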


\begin{figure}[H]
    \centering
    \captionsetup{justification=centering,margin=0.5cm}
    \includegraphics[width=.6\textwidth]{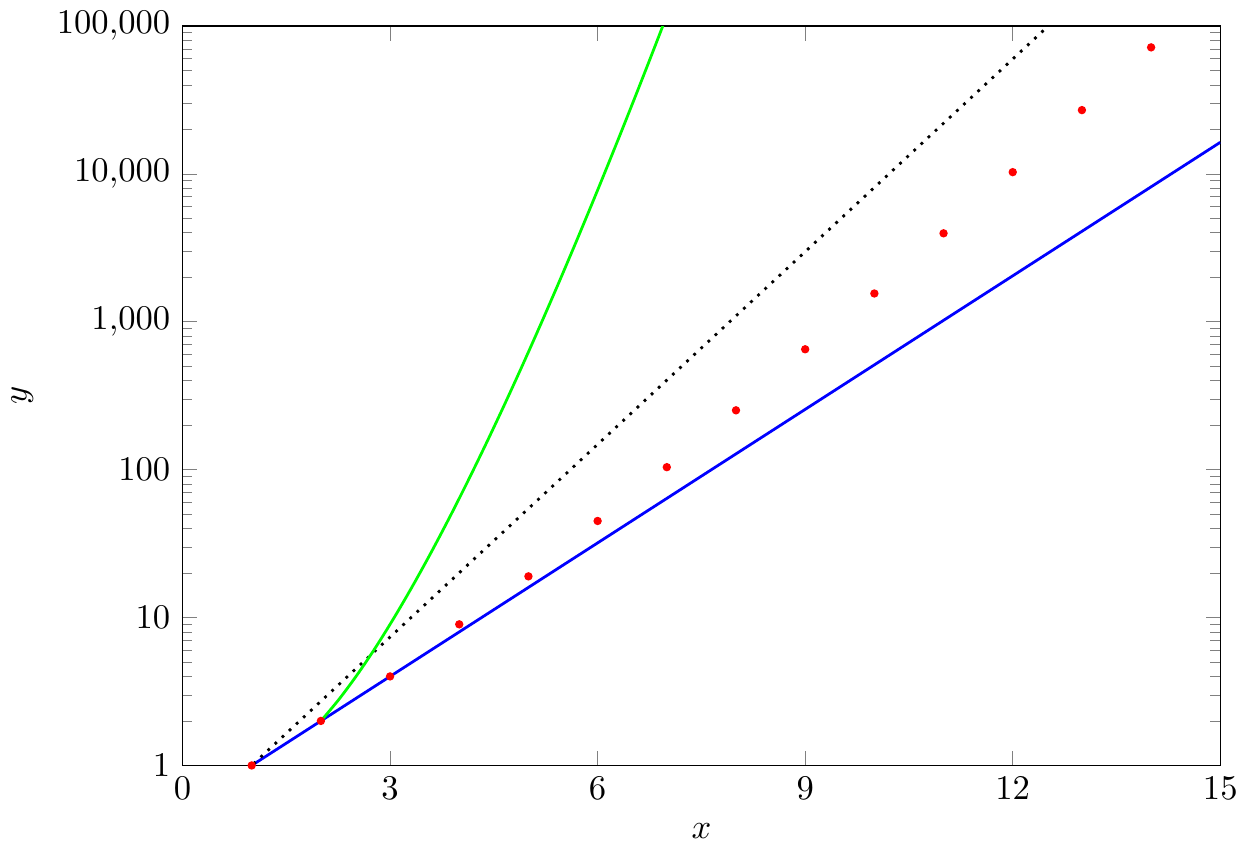}
    \caption{A logarithmicly scaled plot with the size of $K_n$ ($y$), against the value of $n$ ($x$). Plotted are the points in Table \ref{tab:polycomplexities}, the lower bound in blue, the upper bound in green, and the conjectured upper bound in dotted black.}
    \label{fig:polyplot}
\end{figure}

Meanwhile, the notion of modified complexity in the polynomial ring offers a variety of avenues of exploration. One useful tool to explore them is to notate modified complexity expressions using a list as shorthand, partially inspired by our recursive algorithm for using computers to generate optimal expressions in a variety of other rings (see Appendix).

\begin{definition}
    Let the \textbf{shorthand list} $L$ of a modified complexity expression $f(x)$ be an ordered list consisting of 1's and $-1$'s and define it recursively as follows:\begin{itemize}
        \item $x = \{1\}$
        \item $f(x) + x$ is represented by appending 1 to the end of the list $L$ which represents $f(x)$
        \item $f(x) \cdot x$ is represented by appending -1 to the end of the list $L$ which represents $f(x)$
    \end{itemize}
\end{definition}
For example, 
\[
(((x+x) x)+x)x + x = \{1, 1, -1, 1, -1, 1\}.
\]

We can observe the following trivial properties of a polynomial and its shorthand list:

For a modified complexity expression $f(x)$ represented by $L$,
\begin{itemize}
    \item $\deg(f(x))$ is one more than the number of $-1$'s in $L$.
    \item The sum of the coefficients of $f(x)$ is the number of $1$'s in $L$, which is equal to $f(1)$.
    \item $|||f(x)||| = \text{len}(L)$
\end{itemize}

\begin{prop}\label{prop:degreeHorner}
    $$|||f(x)||| = \deg(f(x)) + f(1) -1$$
\end{prop}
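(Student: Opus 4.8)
The plan is to read off both sides of the identity directly from the shorthand list $L$ representing the (unique) modified complexity expression of $f(x)$, using the three bulleted observations stated just before the proposition. Concretely, let $L$ have length $\ell$, with $m$ entries equal to $-1$ and hence $\ell - m$ entries equal to $1$. By those observations, $|||f(x)||| = \ell$, the number of $-1$'s satisfies $m = \deg(f(x)) - 1$, and the number of $1$'s satisfies $\ell - m = f(1)$ (the sum of the coefficients). Adding the last two equations gives $\ell = (\deg(f(x)) - 1) + f(1)$, which is exactly $|||f(x)||| = \deg(f(x)) + f(1) - 1$.

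The only thing that needs justification beyond quoting those observations is that $f(x)$ actually has a shorthand list at all, i.e.\ that the modified complexity expression exists and is unique; but this is precisely the content of the earlier lemma that every polynomial with positive integral coefficients and no constant term has a unique modified complexity expression. So the key steps, in order, are: (1) invoke that lemma to get the unique modified complexity expression and its shorthand list $L$; (2) set up the counting variables $\ell, m$ for the length and the number of $-1$'s; (3) quote the three trivial properties to translate $\ell$, $m$, and $\ell - m$ into $|||f(x)|||$, $\deg(f(x)) - 1$, and $f(1)$ respectively; (4) add and rearrange.

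Alternatively — and perhaps cleaner to present — one can argue by induction on $|||f(x)|||$ directly on the recursive definition of $S$: the base case $f(x) = x$ has $|||x||| = 1$, $\deg x = 1$, $x|_{x=1} = 1$, and $1 = 1 + 1 - 1$ checks out; for the inductive step, the operation $f(x) \mapsto f(x) + x$ increases $|||\cdot|||$ by $1$, leaves the degree unchanged, and increases the value at $1$ by $1$, so both sides go up by $1$; the operation $f(x) \mapsto f(x)\cdot x$ increases $|||\cdot|||$ by $1$, increases the degree by $1$, and leaves the value at $1$ unchanged, so again both sides go up by $1$. Either route is essentially bookkeeping.

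There is no real obstacle here; the proposition is a restatement of the three bulleted properties of the shorthand list, and the ``hard part'' — establishing existence and uniqueness of the Horner/modified-complexity form — has already been done in the preceding lemma. The one thing to be careful about is the off-by-one in the degree bullet (``$\deg(f(x))$ is one more than the number of $-1$'s in $L$''), which is why the final formula has a $-1$ and not a $+1$; I would make sure the algebra in step (4) uses that convention consistently.
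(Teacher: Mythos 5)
Your proposal is correct and matches the paper's proof, which simply notes that the identity follows from the three listed properties of the shorthand list (length $=$ modified complexity, number of $-1$'s $=\deg f - 1$, number of $1$'s $= f(1)$); your bookkeeping with $\ell$ and $m$, and the optional induction, just spell this out in more detail.
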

\begin{proof}
    This follows from the properties listed above. 
\end{proof}

\begin{remark}
    Note that this is a special case of Lemma \ref{lem:deg}.
\end{remark}

\begin{prop}
    The number of unique $f(x)$'s that satisfies that $|||f(x)||| = k$ and $\deg(f(x)) = a$ for positive integers $k, a \ne 1$ is exactly $\binom{k-1}{a-1}$.
\end{prop}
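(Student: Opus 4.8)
The plan is to reduce the count to a count of shorthand lists, using the two bijections already available: the one between polynomials with positive integral coefficients and no constant term and their (unique) modified complexity expressions, and the one between modified complexity expressions and their shorthand lists. First I would record that, by the recursive definition of the shorthand list, every shorthand list begins with the entry $1$ (the base case $x = \{1\}$ starts with $1$, and both operations $f(x)+x$ and $f(x)\cdot x$ only append to the right), and, conversely, a short induction on length shows that \emph{every} finite list over $\{1,-1\}$ whose first entry is $1$ is the shorthand list of some modified complexity expression. Since distinct polynomials have distinct modified complexity expressions (by the uniqueness lemma) and distinct modified complexity expressions obviously have distinct shorthand lists, the assignment $f(x)\mapsto L$ is a bijection from the polynomials under consideration onto the set of finite $\{1,-1\}$-lists whose first entry is $1$.

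Next I would translate the two hypotheses into conditions on $L$. By the trivial properties listed just before Proposition \ref{prop:degreeHorner}, $|||f(x)||| = \text{len}(L)$, so $|||f(x)|||=k$ says precisely that $L$ has length $k$; and $\deg f(x)$ is one more than the number of $-1$'s in $L$, so $\deg f(x)=a$ says precisely that $L$ contains exactly $a-1$ entries equal to $-1$. Hence the polynomials being counted correspond bijectively to the lists of length $k$ whose first entry is $1$ and which contain exactly $a-1$ copies of $-1$ among the remaining $k-1$ entries.

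Finally I would finish with a direct count: the first entry is forced to be $1$, and the positions of the $a-1$ copies of $-1$ among the other $k-1$ slots may be chosen in $\binom{k-1}{a-1}$ ways, each choice determining $L$, hence $f(x)$, uniquely. This yields the claimed value $\binom{k-1}{a-1}$, which is automatically $0$ once $a-1>k-1$, consistent with the fact that a polynomial of modified complexity $k$ has degree at most $k$ (Lemma \ref{lem:3.2}). I do not expect a genuine obstacle: the only points requiring care are verifying that the list-to-expression correspondence really is a bijection (the surjectivity induction together with the appeal to the uniqueness lemma) and checking the degenerate small cases — in particular that the hypothesis $a\neq 1$ is harmless, since for $a=1$ the unique admissible list is the all-$1$'s list, giving $f(x)=kx$ and $\binom{k-1}{0}=1$, as expected.
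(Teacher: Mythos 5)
Your argument is correct and is essentially the paper's own proof: both reduce the count to shorthand lists of length $k$ whose first entry is $1$ and which contain exactly $a-1$ entries equal to $-1$, and then count the $\binom{k-1}{a-1}$ placements of those $-1$'s. Your additional care in spelling out that the polynomial-to-list correspondence is a bijection (via the uniqueness lemma) is a welcome but not essentially different elaboration of the same approach.
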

\begin{proof}
    We know that for $\deg(f(x)) = a$, there must be $a-1$ occurrences of $-1$'s in $L$. We also know that for $|||f(x)||| = k$, the length of $L$ is $k$. Since the first element of $L$ is always 1, we are looking for the amount of ways we can arrange $a-1$ occurrences of $-1$'s and $k-(a-1)$ occurrences of 1's, which is $\binom{k-1}{a-1}$.
\end{proof}

\section{Integers Modulo \texorpdfstring{$m$}{m}}
We also looked into integer complexity in a finite ring $\mathbb{Z}_m$ for integral $m$. Note that we are no longer restricted to using 1 as our base. In this section, we shall denote the base as $s$. 

\begin{obs}
The representations with base $s$ span every element of $\mathbb{Z}/m\mathbb{Z}$ if and only if $s \in U_m$.
\end{obs}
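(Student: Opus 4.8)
The plan is to prove both directions of the biconditional. Writing $U_m$ for the group of units of $\mathbb{Z}/m\mathbb{Z}$, I want to show that the set of values attained by base-$s$ representations is all of $\mathbb{Z}/m\mathbb{Z}$ if and only if $s \in U_m$.

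For the easier direction ($s \notin U_m \implies$ not surjective), suppose $\gcd(s,m) = d > 1$. The key observation is that every base-$s$ representation evaluates (via the substitution $x = s$) to an element of the subring generated by $s$, and since every monomial $s^j$ for $j \ge 1$ is divisible by $d$, and sums and products of multiples of $d$ remain multiples of $d$, every base-$s$ representation evaluates to a multiple of $d$ in $\mathbb{Z}/m\mathbb{Z}$. In particular $1$ (or any unit, or anything coprime to $d$) is not attainable, so the representations do not span $\mathbb{Z}/m\mathbb{Z}$. I should be slightly careful here: an expression is a sum/product of $x$'s with the constant term forced to be absent (every expression is divisible by $x$), so indeed there is no ``free'' $1$ term, and this argument goes through cleanly.

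For the harder direction ($s \in U_m \implies$ surjective), the idea is to show we can hit $1$, and then hit everything by repeated addition. If $s$ is a unit, then $s$ has some multiplicative order $r = \ord_m(s)$, so $s^r \equiv 1 \pmod m$. Then $x^r$ is a valid expression (it is $r$ copies of $x$ multiplied together, a legal expression for $r \ge 1$), and it evaluates to $s^r = 1$ in $\mathbb{Z}/m\mathbb{Z}$. Now for any target $t \in \mathbb{Z}/m\mathbb{Z}$, lift $t$ to an integer in $\{1, 2, \ldots, m\}$ (or just take $t \ge 1$) and write $x^r + x^r + \cdots + x^r$ with $t$ summands; this is a valid expression evaluating to $t \cdot 1 = t$. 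Hence every element is attained. (One can also note $0$ is attained as $t = m$ copies, or simply as $x^r$ added $m$ times.)

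The main obstacle — really the only subtlety — is making sure the notion of ``expression'' is handled correctly in the degenerate edge cases: that a bare power $x^r$ with no additions is a legitimate expression (it is, being an iterated product of the symbol), and that one cannot sneak a constant term into an expression (one cannot, by the definition quoted from \cite{Reyna2021}, since every expression is a sum of products of copies of the symbol and hence divisible by $x$). Once these pedantic points are pinned down, both directions are short: the forward direction is a divisibility invariant preserved under $+$ and $\cdot$, and the reverse direction is the explicit construction $x^{\ord_m(s)}$ summed with itself. I would present the two directions as two short paragraphs, leading with the units case since the construction is the more illuminating half.
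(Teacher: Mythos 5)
Your proof is correct. The paper states this only as an unproved observation, so there is no in-paper argument to compare against; your two directions (the divisibility obstruction $d=\gcd(s,m)>1$ forcing every expression's value into the classes divisible by $d$ since expressions have no constant term, and the construction $x^{\ord_m(s)}$ summed $t$ times to reach every residue, with $m$ copies giving $0$) supply exactly the missing justification and are the natural intended reasoning.
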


\begin{obs}
    Different values of $s$ yield different complexities for the same number. There are some $s$'s that yield larger complexities than others. 
\end{obs}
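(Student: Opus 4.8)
The plan is to prove both assertions at once by exhibiting a single element of $\mathbb{Z}/m\mathbb{Z}$ whose complexity genuinely changes with the base, and then explaining why such an element always exists. Throughout I write $||n||_s$ for the complexity of $n \in \mathbb{Z}/m\mathbb{Z}$ relative to the base $s$, and I assume $m$ is large enough that $U_m$ has at least two elements; the cases $m \le 2$ are degenerate, since then there is only one unit and nothing to compare.

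First I would record the trivial but decisive fact that, for any base $s$, the only representation using a single symbol is the symbol ``$s$'' itself, which evaluates to $s$. Hence $||s||_s = 1$, and moreover $s$ is the unique element of $\mathbb{Z}/m\mathbb{Z}$ of complexity $1$ with respect to base $s$. Now fix two distinct units $s_1, s_2 \in U_m$ and consider the element $n = s_1$. On one hand $||s_1||_{s_1} = 1$. On the other hand, since $s_1 \ne s_2$ in $\mathbb{Z}/m\mathbb{Z}$, no single-symbol representation in base $s_2$ evaluates to $s_1$, so $||s_1||_{s_2} \ge 2$; and because $s_2 \in U_m$, the preceding observation guarantees that base-$s_2$ representations span all of $\mathbb{Z}/m\mathbb{Z}$, so $||s_1||_{s_2}$ is finite. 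Thus $||s_1||_{s_1} = 1 < ||s_1||_{s_2} < \infty$: the same number $s_1$ receives two different complexities, and base $s_2$ yields the strictly larger one. This already establishes the observation.

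To make this concrete I would work out $m = 5$, where $U_5 = \{1,2,3,4\}$. With base $1$ one has $||1||_1 = 1$. With base $2$ the only element of complexity $1$ is $2$, and the only element reachable at complexity $2$ is $2+2 = 4 = 2\cdot 2$; the element $1$ first appears at complexity $3$ via $2\cdot 2 + 2 = 6 \equiv 1 \pmod 5$, so $||1||_2 = 3$. Hence $||1||_1 = 1$ while $||1||_2 = 3$. Symmetrically $||2||_2 = 1$ while $||2||_1 = 2$, so neither base dominates the other uniformly across all elements, which is precisely the point of the second sentence of the observation.

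There is essentially no hard step here; the only things to watch are the finiteness of $||s_1||_{s_2}$ (which needs $s_2 \in U_m$, supplied by the previous observation) and the degenerate small-$m$ cases where $U_m$ has fewer than two elements. An optional refinement, not needed for the statement as worded, would be to quantify how large the gap $||n||_{s_2} - ||n||_{s_1}$ can grow with $m$ by pairing the identity $||s||_s = 1$ with a lower bound on $||s||_{s'}$ coming from the growth rate of the sets of elements reachable at each complexity in base $s'$.
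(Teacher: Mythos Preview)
Your argument is correct. The paper itself offers no proof of this observation at all: it is stated baldly, presumably justified by the numerical data and the subsequent definition of inefficiency rather than by any explicit argument. Your approach --- noting that $s$ is the \emph{unique} element of complexity $1$ in base $s$, so that any other unit $s'$ must have $||s||_{s'} \ge 2$, and then invoking the preceding observation for finiteness --- is a clean way to pin the fact down rigorously. The worked example in $\mathbb{Z}/5\mathbb{Z}$ and the remark that neither base dominates the other uniformly are nice touches that sharpen the observation beyond what the paper provides.
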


We can no longer place bounds on the complexity of a certain element as it varies based on $s$. We define the following term to aid us in our investigation. 

\begin{definition}
    The \textbf{inefficiency} $E$ of an element, which will be used as a base, $s \in \mathbb{Z}/m\mathbb{Z}$ is defined as
    \[
    E(m,s) = \max(||0||, ||1||, ||2||, \ldots ||m-1||)
    \]
    In other words, the maximum complexity of an element of $\mathbb{Z}/m\mathbb{Z}$ with $x=s$.  
\end{definition}

Certain questions arise after investigating inefficiency. We will use these questions to guide our investigation of complexity in a finite ring.  
\begin{enumerate}
    \item Which values of $s$ are the least inefficient?
    \item Which values of $s$ are the most inefficient?
    \item Is there a formula to determine which $s$ are the least inefficient? Alternatively, what are the properties of the least inefficient $s$? 
    \item Is 0 always the most complex number? 
\end{enumerate}

Through computation, we know that 4 is false. One counterexample is with a base of 9 in $\mathbb{Z}/11\mathbb{Z}$. There is also the obvious counterexamples with a base of $-1$, since $(-1)(-1) + (-1) = 0$ is an expression of complexity 3.

\begin{conj} \label{conj:4.1}
    The inefficiencies have a logarithmic upper and lower bound, as appears to be true in Figure \ref{fig:AllInefficiencies}.
\end{conj}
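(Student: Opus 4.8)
I would prove the two bounds by quite different means, and I expect essentially all the difficulty to sit in the upper bound.

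The lower bound is a counting argument and should be routine. Since $E(m,s)$ is only defined (and finite) when $s \in U_m$ — by the Observation above, otherwise the base-$s$ representations do not span $\mathbb{Z}/m\mathbb{Z}$ — we may assume $s \in U_m$, and then every residue is represented by some expression of complexity at most $E(m,s)$. An expression of complexity exactly $t$ is a full binary tree with $t$ leaves together with a choice of $+$ or $\cdot$ at each of its $t-1$ internal nodes, so there are at most $2^{t-1}C_{t-1} \le 2^{t-1}4^{t-1} = 8^{t-1}$ of them, where $C_{t-1}$ denotes the Catalan number. Hence at most $\sum_{j=1}^{t} 8^{j-1} < 8^{t}$ residues have complexity $\le t$. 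Taking $t = E(m,s)$ forces $8^{E(m,s)} > m$, i.e.
\[
E(m,s) \;>\; \frac{\ln m}{3\ln 2},
\]
a logarithmic lower bound that is uniform in $s$.

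For the upper bound the natural first attempt is to reduce to the polynomial ring and invoke Lemma \ref{lem:deg}. If $f \in \mathbb{N}_0[x]$ has no constant term and $f(s) \equiv n \pmod m$, then substituting $x = s$ into a Horner expression for $f$ shows $||n||_s \le f(1) + \deg f - 1$. Choosing $f(x) = \sum_{i=1}^{L} \varepsilon_i x^i$ with $\varepsilon_i \in \{0,1\}$ gives $f(1) + \deg f \le 2L$, so it would suffice to show that for $L = O(\log m)$ the subset-sum set
\[
A_L \;=\; \Bigl\{\, \textstyle\sum_{i=1}^{L} \varepsilon_i s^{i} \;:\; \varepsilon_i \in \{0,1\} \,\Bigr\} \pmod m
\]
is all of $\mathbb{Z}/m\mathbb{Z}$; then $||n||_s \le 2L = O(\log m)$ for every $n$, hence $E(m,s) = O(\log m)$. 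Surjectivity of $A_L$ would follow from a bound on the exponential products $\prod_{i=1}^{L}\bigl(1 + e^{2\pi i \xi s^{i}/m}\bigr) = 2^{L}\prod_{i=1}^{L}\cos(\pi \xi s^{i}/m)$ for $\xi \not\equiv 0$, i.e. from the statement that the orbit $\{s^{i} \bmod m\}$ is dispersed rather than concentrated near multiples of $m$.

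The hard part — and the reason this is stated as a conjecture — is that this clean approach cannot work uniformly in $s$. When $s$ has small multiplicative order the orbit $\{s^{i}\}$ is tiny; for instance with $s = -1$ the set $A_L$ has only $O(L)$ elements, so it cannot cover $\mathbb{Z}/m\mathbb{Z}$ unless $L$ is of order $m$. For such $s$ one must instead use the elementary bound $||n||_s \le ||1||_s \cdot ||n||_1$ (replace each $1$ in an optimal ordinary-complexity expression for $n$ by an optimal $s$-expression for the residue $1$), together with $||n||_1 = O(\log m)$; this is genuinely $O(\log m)$ only when $||1||_s = O(1)$, which holds when $\ord_m(s)$ is bounded (e.g. $||1||_{-1} = 2$) but degrades to $O(\log^2 m)$ when $\ord_m(s)$ grows. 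Thus the plan is to split on $d = \ord_m(s)$: handle $d \ge m^{\varepsilon}$ by the subset-sum/exponential-sum route above (where dispersion estimates for multiplicative subgroups are available, in the circle of ideas around Bourgain--Glibichuk--Konyagin), handle $d = O(1)$ by the $||1||_s \cdot ||n||_1$ bound, and --- the real obstacle --- bridge the intermediate range, where neither estimate is strong enough on its own; for composite $m$ I would additionally attempt a Chinese-Remainder reduction to prime powers, with the lift from $\mathbb{Z}/p\mathbb{Z}$ to $\mathbb{Z}/p^{k}\mathbb{Z}$ a further complication. In summary, the lower bound is complete as sketched, while the logarithmic upper bound needs a nontrivial additive-combinatorics input and a case analysis on the order of $s$ that I do not see how to make fully uniform.
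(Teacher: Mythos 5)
This statement is a conjecture in the paper: there is no proof to compare against, so the relevant question is how much of it your proposal actually settles. Your lower bound is correct and complete: counting full binary trees with $t$ leaves and a choice of $+$ or $\cdot$ at each internal node gives at most $8^{t-1}$ expressions of complexity $t$, hence fewer than $8^t$ distinct residues of complexity at most $t$, and since a unit base represents all $m$ residues this forces $E(m,s) > \log_8 m$ uniformly in $s$. This is more than the paper establishes rigorously in either direction (its only lower-bound discussion is a non-rigorous logistic-growth heuristic that was cut from the final text), so this half is a genuine contribution rather than a rehash.

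The upper bound is where the gap sits, as you yourself say, and it is worth being precise about how your difficulty matches the paper's. The closest the paper comes is Theorem \ref{thm:modupper}, $E(m,s) \le \frac{s}{\ln s}\ln m + s - 1$, which is logarithmic in $m$ for each fixed $s$ but useless uniformly because of the additive and multiplicative dependence on $s$ (the paper's own Remark \ref{rem:Ebound} concedes this); its corollary $||n||_s \le ||a||_s\left(\frac{a}{\ln a}\ln m + a - 1\right)$ is exactly your ``replace the base by a cheap small integer'' device, and it reduces the conjectured upper bound to the statement that some fixed small $a$ (say $a=2$ or $3$) satisfies $||a||_s = O(1)$ uniformly over units $s$ and moduli $m$ --- which is precisely the obstruction your $\ord_m(s)$ case analysis runs into in the intermediate range, and which neither you nor the paper can currently handle. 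So your proposal proves the lower half of the conjecture and correctly locates, but does not close, the open half; claiming the conjecture as proved would be wrong, but your assessment of where the difficulty lies is consistent with why the paper leaves it as Conjecture \ref{conj:4.1}.
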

After looking at some numerical examples, we also conjecture the following:

\begin{conj} \label{conj:inefficient1}
    For all $m$, when $s=1, E(m,s)$ is the highest out of all the possible $s$ values.
\end{conj}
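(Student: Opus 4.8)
The plan is to recast the quantity $E(m,s)$ as a covering radius. For a unit base $b$ and integer $c \ge 1$, write $R_b(c) = \{\, n \in \mathbb{Z}/m\mathbb{Z} : ||n||_b \le c \,\}$; this is the ``ball of radius $c$'' in the complexity graph generated by $b$ (vertices $\mathbb{Z}/m\mathbb{Z}$, where a residue is joined to any sum or product of two previously reached residues). Then $E(m,b) = \min\{ c : R_b(c) = \mathbb{Z}/m\mathbb{Z} \}$, and Conjecture \ref{conj:inefficient1} is equivalent to: for every unit $s$, $R_s(c) = \mathbb{Z}/m\mathbb{Z}$ whenever $R_1(c) = \mathbb{Z}/m\mathbb{Z}$. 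Since $R_1(E(m,1)) = \mathbb{Z}/m\mathbb{Z}$ has exactly $m$ elements, it would suffice to prove the monotonicity estimate $|R_s(c)| \ge |R_1(c)|$ for all $c \ge 1$ and all units $s$ — that is, $1$ is the base at which the reachable set grows slowest.

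First I would attempt to build, for each $c$, an injection $\iota_c : R_1(c) \hookrightarrow R_s(c)$. A representation of $n$ using $\le c$ ones is a binary tree of $+$'s and $\times$'s with at most $c$ leaves, every leaf labelled $1$; relabelling every leaf with $s$ turns it into a base-$s$ representation of cost $\le c$ of the residue obtained by evaluating the corresponding polynomial at $s$. Fixing a canonical optimal representation $f_n$ for each $n \in R_1(c)$ gives the candidate map $\iota_c : n \mapsto \overline{f_n(s)}$, whose image lies in $R_s(c)$; the task is then to show $\iota_c$ can be chosen injective, or to replace it with a pigeonhole count comparing, in each base, the number of distinct residues contributed when passing from cost $c-1$ to cost $c$.

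\textbf{The hard part} is exactly this injectivity or count: re-rooting a tree does not respect the ring structure of $\mathbb{Z}/m\mathbb{Z}$, so there is no evident reason distinct residues in $R_1(c)$ map to distinct residues in $R_s(c)$, and the crude estimates available (each base reaches the residues of $b, 2b, \ldots, cb$ by pure addition, giving only $|R_b(c)| \ge \min(c,m)$) are far too weak to separate $s$ from $1$. Two heavier fallbacks are (i) to reduce to prime-power moduli via CRT and use that a unit $s$ has finite multiplicative order $d$ dividing $|U_m|$, so that $s^d = 1$ is reachable and the cyclic multiplicative structure can in principle simulate any base-$1$ representation; and (ii) to homogenize a base-$1$ formula $f$ to degree $d$ and substitute $x = s^{-1}$, using $f(s^{-1}) = s^{-d} f(1)$ so that the resulting residues sweep out all of $\mathbb{Z}/m\mathbb{Z}$ as $f(1)$ does. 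Both fallbacks run into the same wall: recombining CRT components, manufacturing the unit $1$ from $s$, and homogenizing all carry a cost overhead, and bounding that overhead uniformly by $E(m,1)$ is precisely the step I do not see how to push through. In short, a proof seems to require a genuinely cost-preserving way to transplant a complete system of base-$1$ representations into base $s$, and producing one is the crux — consistent with the statement being left as a conjecture.
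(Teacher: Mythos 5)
There is nothing in the paper to check you against here: this statement is left as a conjecture, supported only by numerical computation and a one-sentence heuristic (``multiplying by $1$ does not yield new values, whereas other bases can be multiplied to obtain other numbers''), so no proof exists on either side. Your reformulation is a reasonable sharpening of that heuristic: writing $R_b(c)$ for the set of residues of complexity at most $c$ in base $b$, the conjecture is exactly $E(m,s)\le E(m,1)$ for all units $s$, and the ball-growth monotonicity $|R_s(c)|\ge |R_1(c)|$ for all $c$ would indeed suffice (it is in fact stronger than needed; only $c=E(m,1)$ is required). But as you yourself say, the central step is missing, and it is worth being explicit about why the leaf-relabelling map cannot be expected to work as stated: an optimal base-$1$ representation of $n$ is an expression $f_n$ with $f_n(1)\equiv n$, and the residues $\overline{f_n(s)}$ for distinct $n\in R_1(c)$ have no reason to be distinct --- reduction mod $m$ already collapses the base-$1$ picture in uncontrolled ways (the paper's own discussion of $107\in\mathbb{Z}/109\mathbb{Z}$, where $E(m,1)$ fails to track natural-number complexity because $216\equiv 107$, illustrates how congruence collisions interact nontrivially with complexity), and substituting $s$ for $1$ rearranges those collisions arbitrarily. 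So the injection would have to be constructed globally, not representation-by-representation, and neither your CRT fallback nor the homogenization trick $f(s^{-1})=s^{-d}f(1)$ controls the cost overhead of simulating the unit $1$ from $s$ by anything bounded in terms of $E(m,1)$.

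One further caution if you pursue the counting route: the monotonicity $|R_s(c)|\ge|R_1(c)|$ is plausible but is itself an open strengthening, and the paper's neighboring Conjectures \ref{conj:4.3} and \ref{conj:4.4} (that $s=-1$ and $s=2$ are the next most inefficient) suggest the mechanism is really about how many \emph{distinct} expressions a base admits at low cost --- the notion the paper calls resilience --- rather than about transporting base-$1$ representations. A more promising intermediate target, consistent with Lemma \ref{prop:lowerresilience}, would be a quantitative statement of the form: if $r(s)>r(1)=2$, then $|R_s(c)|\ge|R_1(c)|$ for all $c$ up to the covering threshold. That still looks hard, but it at least isolates the combinatorial content instead of asking for a cost-preserving transplant of representations, which, as you conclude, is exactly the obstruction that keeps this a conjecture.
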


This conjecture could be explained by the fact that multiplying by 1 does not yield new values, whereas other bases $s$ can be multiplied to obtain other numbers (thus exhausting all the integers in $\mathbb{Z}/m\mathbb{Z}$ earlier, reduing the inefficiency).

\begin{conj} \label{conj:4.3}
    $E(m,-1)$ is the second or tied third most inefficient for sufficiently large $m$. 
\end{conj}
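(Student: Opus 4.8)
The plan is to recast inefficiency as a covering time and then trap $E(m,-1)$ precisely enough to read off its rank. Writing $R_s(c)=\{\,f(s)\bmod m : f\text{ an expression in }x\text{ with at most }c\text{ symbols}\,\}$, the sets $R_s(c)$ grow with $c$, so $E(m,s)=\min\{c: R_s(c)=\mathbb{Z}/m\mathbb{Z}\}$: the inefficiency of $s$ is exactly how many symbols are needed before every residue first appears, and the whole problem is to compare these covering times over $s\in U_m$. By Conjecture~\ref{conj:inefficient1} the base $1$ is the unique most inefficient base, so $-1$ is automatically at best second; the real content is to show that at most one further base can be as inefficient as $-1$.

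To that end I would first pin down $E(m,-1)$ up to lower order. Viewing $-1=\zeta_2$, Lemma~\ref{lem:magnitude} gives $|f(-1)|\le f(1)$, and the estimate used in establishing the logarithmic lower bound (from \cite{Altman2012}) gives $f(1)\le 3^{c/3}$ whenever $f$ has $c$ symbols; hence every integer realised as $f(-1)$ by a $c$-symbol expression has absolute value at most $3^{c/3}$. The integers congruent to $\lceil m/2\rceil\pmod m$ that are nearest $0$ have absolute value about $m/2$, so covering that residue forces $3^{c/3}\gtrsim m/2$, i.e.
\[
E(m,-1)\ \ge\ 3\log_3 m - O(1).
\]
In the reverse direction, using $(-1)(-1)=1$ together with one extra factor of $-1$ to fix the overall sign, an optimal ordinary-complexity expression for $|N|$ converts to a base-$(-1)$ expression for any $N\equiv n\pmod m$ with only $O(1)$ overhead; taking $N$ to be the least-absolute-value representative of $n$ gives a matching $E(m,-1)\le 3\log_3 m + o(\log m)$. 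So $E(m,-1)=3\log_3 m + o(\log m)$, essentially the ordinary complexity of the worst residue class once negative representatives are allowed.

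The crux — and the main obstacle — is the last step: for all large $m$, every unit $s\not\equiv\pm1$, with at most one exceptional value (this is where ``tied third'' enters), must satisfy $E(m,s)<E(m,-1)$. The point is that for such $s$ the magnitude obstruction disappears: if $s'$ is the least-absolute-value representative of $s$, then $f(s')$ can be as large as $\approx|s'|^{c}$, so $R_s(c)$ can be filled by a genuinely multiplicative Horner-type expansion ``in base $s'$,'' whose cost is $\approx\frac{|s'|}{\ln|s'|}\ln m$ — strictly below $3\log_3 m$ already at $|s'|=3$, and reducible further by Zelinsky-style base switching \cite{Zelinksky2022} and by exploiting the cyclic subgroup generated by $s$. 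Uniformity is delicate exactly at the extremes: for $|s'|\in\{2,4\}$ the naive Horner constant $2/\ln 2$ actually \emph{exceeds} $3/\ln 3$, so one genuinely needs the cleverer expressions to beat $-1$, and for $|s'|$ close to $m$ the Horner ``digits'' are themselves expensive; handling both uniformly seems to need sharper control of integer complexity in residue classes than is currently available, which is why the statement is only a conjecture. A realistic program is therefore: (i) first prove the weaker fact that $-1$ is among the $O(1)$ most inefficient bases — for which the crude bound $|R_s(c)|\le C^{c}$ already forces every $E(m,s)=\Theta(\log m)$ and one only compares constants; and (ii) establish the full ``second or tied third'' statement conditionally on the standard conjectured near-optimal bound $||N||\lesssim\frac{3}{\ln 3}\ln N$ on a density-$1$ set of $N$, which makes every base with $|s'|$ bounded away from $2$ strictly beat $-1$ and reduces the problem to the finitely many small bases together with the large-$|s'|$ range. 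I expect step (ii), and especially ruling out that bases such as $2$ and $4$ are as inefficient as $-1$, to be the genuinely hard part.
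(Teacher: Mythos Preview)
The paper does not prove this statement: it is listed as a conjecture, and the only support offered is the single sentence ``A similar heuristic for Conjecture~\ref{conj:inefficient1} applies here.'' There is therefore no proof in the paper to compare your attempt against, and your write-up is (appropriately) a research outline rather than a proof.

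That said, two substantive points about the outline itself. First, your reduction to ``at most one further base can tie $-1$'' rests on Conjecture~\ref{conj:inefficient1}, which is itself unproven in the paper; so even if the remaining steps went through, the argument would be conditional on another open conjecture. Second, the upper bound $E(m,-1)\le 3\log_3 m+o(\log m)$ is not justified by the sentence ``an optimal ordinary-complexity expression for $|N|$ converts to a base $-1$ expression \ldots\ with only $O(1)$ overhead.'' Substituting $-1$ for $1$ in an expression $f$ does not in general produce $\pm f(1)$, and the only obvious conversion (replacing each $1$ by $(-1)(-1)$) doubles the symbol count rather than adding $O(1)$. The paper's own upper bounds for $\lVert n\rVert_2$ (Propositions~\ref{prop:cycloweakupper} and the Zelinsky-style one that follows) give constants well above $3/\ln 3$, so a matching upper bound of this quality would itself need a new argument.

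Finally, you correctly identify the hard step as showing that every unit $s\not\equiv\pm1$ (with at most one exception) satisfies $E(m,s)<E(m,-1)$, and you correctly flag the small-$|s'|$ cases where the naive Horner constant exceeds $3/\ln 3$ as the obstruction. That step is essentially the entire content of the conjecture, and neither your outline nor the paper resolves it.
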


A similar heuristic for Conjecture \ref{conj:inefficient1} applies here to provide an intuition as to why this is likely true.

\begin{conj} \label{conj:4.4}
    $E(m,2)$ is the third most inefficient for odd $m$. 
\end{conj}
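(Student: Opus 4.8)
The plan is to first invoke Conjectures~\ref{conj:inefficient1} and~\ref{conj:4.3}, which place $s=1$ and $s=-1$ in the top two inefficiency positions (treating ties as loosely as the paper itself does). The statement then reduces to: for odd $m$, $E(m,2) \ge E(m,s)$ for every unit $s \pmod m$ with $s \not\equiv \pm 1$ (note $2 \in U_m$ automatically since $m$ is odd, and the claim is only non-vacuous for $m \ge 5$). The strategy is a sandwich: an upper bound $E(m,s) \le U(m,s)$ that decreases as $s$ becomes ``multiplicatively larger,'' and a lower bound $E(m,2) \ge L(m)$ matching $U(m,2)$ closely enough to beat $U(m,s)$ for all other $s$.

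For the upper bound, given a unit $s$ let $s' = \min\bigl(s \bmod m,\ (-s)\bmod m\bigr) \in \{2,3,\ldots\}$, so that $s$ or $-s$ reduces to the genuinely small residue $s'$. Expanding an arbitrary $n$ in base $s'$ uses at most $\lceil \log_{s'} m\rceil + 1$ digits from $\{0,1,\ldots,s'-1\}$, and a Horner-style evaluation mod $m$ with base $s$ (as in Proposition~\ref{prop:cycloweakupper} and Lemma~\ref{lem:deg}, now absorbing sign flips when $s \equiv -s'$) reassembles $n$; the per-digit cost is one symbol for the multiplication by $s$ plus the cost of adjoining the digit, the latter bounded via a recursively built ``digit table'' of total size $O(s')$. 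Careful bookkeeping should give $E(m,s) \le \dfrac{C\ln m}{\ln s'} + O(s')$ for an absolute constant $C$. Since $s' \ge 2$ with equality exactly when $s \equiv \pm 2 \pmod m$, the leading coefficient $C/\ln s'$ is maximized precisely at $s'=2$; for $s' \ge 3$ the deficit $\frac{1}{\ln 2} - \frac{1}{\ln 3}$ in the leading term, scaled by $\Theta(\ln m)$, must be shown to dominate the additive overhead uniformly in $m$, which likely forces one to sharpen the digit-table overhead from $O(s')$ down to $O(\log s')$.

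For the lower bound on $E(m,2)$ there are two options. The soft one is a counting argument: the number of expression shapes of length $\le \ell$ is at most a Catalan-type quantity $\le 4^\ell$ (cf.\ the shorthand-list encoding of Section~\ref{Polynomial Rings}), so once $4^\ell < m$ some residue has base-$2$ complexity $> \ell$, giving $E(m,2) \ge \tfrac12\log_2 m - O(1)$. This is clean but its constant is too weak to match the upper bound, so the necessary harder step is to improve it by adapting Altman's lower-bound method \cite{Altman2012} — which shows densest expressions are near-ternary — to the base-$2$, mod-$m$ setting, exhibiting an explicit element (natural candidates: $1$ when $\mathrm{ord}_m(2)$ is large, or a residue forced to be digit-heavy in every base-$2$ Horner expansion) whose base-$2$ complexity is $\ge \frac{C\ln m}{\ln 2} - o(\ln m)$ with the \emph{same} constant $C$ as the upper bound.

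The main obstacle is exactly this constant-matching between the base-$2$ lower bound and the base-$s$ upper bounds: it is the finite-ring analogue of the (open) problem of pinning down the asymptotic constant of ordinary integer complexity, which is presumably why the statement is a conjecture. It is compounded by the need to separate base $2$ from base $-2 \equiv m-2$: these share $s'=2$, so the argument above only ties them, and breaking the tie in favor of $2$ requires showing that base $-2$ gains efficiency from cheap ``subtraction'' (signed-binary / non-adjacent-form expansions) while base $2$ does not, and that this asymmetry persists in the maximum over all residues. A realistic intermediate target, short of the full conjecture, is to prove that $E(m,2)$ is among the two largest values of $\{E(m,s) : s \in U_m \setminus \{\pm 1\}\}$ (allowing a tie with $m-2$), and to establish the strict third-place ranking only for restricted families, e.g.\ primes $m$ for which $2$ is a primitive root.
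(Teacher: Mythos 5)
This statement is Conjecture~\ref{conj:4.4} in the paper: the authors do not prove it, and offer only a heuristic (that $s=2$ suffers an early collision $x^2 = 2x$, i.e.\ low resilience, so fewer distinct values appear at small complexity), backed by computation. Your proposal is likewise not a proof — it is a program, and you candidly flag its two hardest steps yourself — so the honest verdict is that it has genuine gaps rather than being a correct alternative argument.

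The central gap is a mismatch of resolution. The conjecture is an \emph{exact ranking} statement: $E(m,2)$ sits strictly in third place behind $E(m,1)$ and $E(m,-1)$, ahead of every other unit base. But the paper's own data (e.g.\ the observation that most $E(43,s)$ cluster at the same value $7$, and Figures~\ref{fig:44}--\ref{fig:43}) show that inefficiencies for different bases differ by only a unit or two. Your machinery — a Horner-type upper bound $E(m,s) \le C\ln m/\ln s' + O(s')$ and a counting or Altman-style lower bound for base $2$ — is intrinsically asymptotic with non-matching constants and additive slack; bounds of that coarseness cannot separate quantities that differ by $\pm 1$, so even if both halves were carried out they would not yield the ranking. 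The ``constant-matching'' you identify as the main obstacle is essentially the open problem of the true growth constant of integer complexity, and on top of it the bases with $s' = 2$ (namely $2$ and $m-2$) are provably tied by your method, while the conjecture demands that $2$ beat $m-2$. Note also that the mechanism your upper bound tracks (multiplicative size of $\pm s$) is different from, and arguably weaker than, the paper's heuristic: their intuition is resilience — base $2$ is penalized because $x^2$ and $2x$ collide, killing a branch of the mct — which is a structural, not magnitude-based, explanation, and which your framework does not capture. Your suggested retreat to a weaker statement (top two among $s \ne \pm 1$, or restricted families of prime $m$) is a reasonable research direction, but it is not the statement at hand.
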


The general idea here is that since $x = 2$ allows the two expressions $x^2$ and $2x$ to evaluate to the same value, we only have one element of complexity 2, which allows us to produce fewer values of certain greater complexities.

\begin{obs}
    Other than our conjectures above, inefficiency seems to be unrelated to the order of $s$ or how large $s$ is.  
\end{obs}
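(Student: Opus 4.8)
The plan is first to make the statement precise, since as phrased it asserts the \emph{absence} of a relationship rather than a positive fact, and ``seems to be unrelated'' admits no direct proof. I would split it into two concrete claims to be verified: (a) $E(m,s)$ is not determined by $\ord(s)$, i.e. there exist $m$ and units $s_1 \neq s_2$ in $U_m$ with $\ord(s_1) = \ord(s_2)$ but $E(m,s_1) \neq E(m,s_2)$; and (b) outside the exceptional bases $s \in \{1,-1,2\}$ covered by Conjectures \ref{conj:inefficient1}--\ref{conj:4.4}, the map $s \mapsto E(m,s)$ is not monotone in any natural notion of ``size'' of $s$ (say $\min(s,\,m-s)$), i.e. there are $m$ and bases $s_1 < s_2$ with $E(m,s_1) > E(m,s_2)$ as well as examples with the inequality reversed. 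Establishing the observation then means exhibiting explicit witnesses for (a) and (b) together with a heuristic explaining why no such dependence should be expected.

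For the heuristic I would point to a rigidity phenomenon: the ring $\mathbb{Z}/m\mathbb{Z}$ has only the trivial ring automorphism (any ring homomorphism fixes $1$, hence every element), so unlike the cyclotomic setting --- where the Galois action noted in the Remark after Proposition \ref{prop:comp1} forces $||n||_k$ to be invariant under replacing $\zeta_k$ by another primitive $k$-th root --- there is \emph{no} symmetry of $\mathbb{Z}/m\mathbb{Z}$ carrying one base $s$ to another base $s'$. Consequently $E(m,s)$ depends on the full interleaving of the multiplicative sequence $s, s^2, s^3, \dots$ with the additive structure modulo $m$ (which partial Horner-type subexpressions can be reached, and how cheaply), and coarse invariants such as $\ord(s)$ or the magnitude of the least residue of $s$ simply do not record this. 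I would also remark that even $s$ and $s^{-1}$, which generate the same cyclic subgroup $\langle s\rangle \subseteq U_m$, can yield different inefficiencies, which already kills the most tempting ``same order $\Rightarrow$ same $E$'' guess.

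For the verification I would run the recursive optimal-expression algorithm from the Appendix over a range of moduli $m$ and all bases $s \in U_m$, tabulate $E(m,s)$ against $\ord(s)$ and against $\min(s,\,m-s)$, and read off small explicit counterexamples for (a) and (b). I expect the main obstacle to be the inherent softness of the statement: one cannot \emph{prove} genuine independence, only refute each concrete candidate relationship, so the honest outcome is a negative result --- ``$E(m,\cdot)$ is neither a function of $\ord(\cdot)$ nor monotone in $\min(\cdot,\,m-\cdot)$'' --- supported by the automorphism-rigidity observation above rather than a clean theorem. A secondary difficulty is calibrating which bases must be excluded: for composite $m$ the ``structured'' bases may extend beyond $1,-1,2$ (e.g. zero divisors, or units with $s^2 = \pm 1$), so the precise formulation of (b) would have to be adjusted against the computed data.
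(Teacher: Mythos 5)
This item is an \emph{Observation}, not a theorem: the paper gives no proof at all, and its only support is the numerical experiments behind the inefficiency plots and tables in that section. Your proposal is therefore not missing anything the paper has; in substance you do the same thing the authors did (compute $E(m,s)$ over many moduli and bases and note that no pattern in $\ord(s)$ or in the size of $s$ emerges), but you package it better. Your split into two falsifiable claims --- (a) $E(m,\cdot)$ is not a function of $\ord(\cdot)$, witnessed by same-order units with different inefficiencies, and (b) no monotonicity in $\min(s,m-s)$ outside the conjectured special bases --- is a more honest formulation of what ``seems to be unrelated'' can actually mean, and exhibiting explicit witnesses is exactly the right standard of evidence for a claim of this type. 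The automorphism-rigidity heuristic is a genuine addition not present in the paper, and the contrast with the cyclotomic case (where the Galois action does force base-independence of complexity for integers) is apt; just keep it labeled as a heuristic, since the absence of a ring symmetry carrying $s$ to $s'$ does not by itself rule out a statistical relationship between $E(m,s)$ and coarse invariants of $s$. Likewise, your remark that $s$ and $s^{-1}$ may have different inefficiencies is plausible but must be confirmed from the computed data rather than asserted. With those caveats, your plan is correct and, if anything, sharper than the paper's own treatment, which leaves the observation entirely at the level of ``this is what the plots look like.''
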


\begin{obs}
    There always seems to be a ``dominant'' inefficiency. As in, there is a value $n_m$ where there exists many $s$ values such that $E(m,s) = n_m$. There are certain $m$'s where this dominant value is much more dominant, and there are some values where the distribution of inefficiency is more equal. Furthermore, sometimes the dominant inefficiency seems to be the smallest possible inefficiency, but this not always the case. 
\end{obs}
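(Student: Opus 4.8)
A proof of this observation would begin by replacing the informal word ``dominant'' with a theorem-shaped claim; the cleanest choice is that for every $m$ there is a value $n_m$ with $|\{s \in U_m : E(m,s) = n_m\}| \ge \varphi(m)/g(m)$, where $g(m)$ is as slowly growing as one can manage --- ideally $g(m) = O(\log m)$, matching the conjectured logarithmic \emph{width} of the inefficiency spectrum in Conjecture \ref{conj:4.1}. In this form the statement is pure pigeonhole: $E(m,\cdot)$ has domain $U_m$ of size $\varphi(m)$, so if its range lies in an interval of length $g(m)$ then some value in that interval is hit at least $\varphi(m)/g(m)$ times, and that value is $n_m$.

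The plan thus reduces to pinning down the spectrum $\{E(m,s) : s \in U_m\}$ from both sides. The lower end is easy counting: the number of distinct expressions of complexity at most $k$ over a single symbol is at most $C^k$ for an absolute constant $C$ (a Catalan-type count of the underlying binary trees with $+/\times$ labels on internal nodes), so covering all $m$ residues of $\mathbb{Z}/m\mathbb{Z}$ forces $E(m,s) \ge \log_C m$ for every base $s$. The upper end is the real work: one wants, uniformly in $s \in U_m$, a Horner-style base-$s$ expansion writing every residue with $O(\log m)$ ones, hence $E(m,s) \le C'\log m$ --- exactly the program flagged in the introduction of improving the upper bound on the complexity of the most complex element mod $m$. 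Granting both bounds, pigeonhole delivers the quantitative version of the observation. The qualitative tail (``sometimes $n_m$ is the minimum of the spectrum, sometimes not'') can then be attacked by noting, via Conjectures \ref{conj:inefficient1}, \ref{conj:4.3} and \ref{conj:4.4}, that the \emph{extreme} inefficiencies are attained only by a handful of structurally special bases ($s = 1, -1, 2$), which forces the many remaining ``generic'' units to pile up on the few interior values; $n_m$ coincides with $\min_s E(m,s)$ precisely when those special bases do not inflate the maximum past the generic value.

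The main obstacle is the uniform logarithmic \emph{upper} bound on $E(m,s)$: nothing in the paper establishes it (Conjecture \ref{conj:4.1} is left open), and without it the pigeonhole step only sees a range of length $O(m)$, which makes ``dominant'' vacuous. A subtler second obstacle is that even with a logarithmic spectrum, pigeonhole yields merely a fiber of size $\ge \varphi(m)/\mathrm{polylog}(m)$, whereas the data suggest $n_m$ is often a genuine plurality or majority; closing that gap would require understanding the actual distribution of $E(m,\cdot)$ over $U_m$ --- say a concentration argument showing that for ``most'' bases $s$ the base-$s$ reachability structure on $\mathbb{Z}/m\mathbb{Z}$ behaves like a pseudorandom generator and therefore has essentially the same diameter --- rather than just its support.
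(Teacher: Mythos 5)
This statement is an \emph{Observation} in the paper: it is reported as an empirical pattern seen in the computed data (e.g.\ Figures of inefficiencies for $m=43,44$), and the paper offers no proof of it, nor even a precise formalization of ``dominant.'' So there is nothing in the paper to match your argument against; the relevant question is whether your sketch actually establishes the claim, and it does not. Your own diagnosis is correct: the pigeonhole step needs an upper bound on $E(m,s)$ that is uniform over all $s \in U_m$ and of size $O(\log m)$, and no such bound exists in the paper. Theorem \ref{thm:modupper} gives $E(m,s) \le \frac{s}{\ln s}\ln m + s - 1$, which degrades linearly in $s$; for bases $s$ of size comparable to $m$ this is of order $m$, so the spectrum of $E(m,\cdot)$ is only known to lie in an interval of length $O(m)$ and the pigeonhole conclusion is vacuous. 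Conjecture \ref{conj:4.1}, which would supply the needed logarithmic width, is explicitly left open. Your lower bound $E(m,s) \ge \log_C m$ by counting expressions is fine, but it is the cheap half.

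Two further points. First, even granting a logarithmic spectrum, pigeonhole gives a fiber of size $\varphi(m)/O(\log m)$, which is a reasonable reading of ``many $s$ values'' but does not recover the plurality-type concentration visible in the data; you acknowledge this, and the concentration argument you gesture at (pseudorandomness of the base-$s$ reachability structure) is not something the paper provides tools for. Second, your treatment of the tail claim (``$n_m$ is sometimes, but not always, the minimum inefficiency'') leans on Conjectures \ref{conj:inefficient1}, \ref{conj:4.3}, and \ref{conj:4.4}, which are themselves unproven heuristics in the paper, so that step is conditional rather than a proof; in any case an ``is not always'' assertion is most honestly settled the way the paper settles it, by exhibiting explicit computed examples. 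In short, your proposal is a sensible research program for turning the observation into a theorem, but it is not a proof, and the paper does not contain one either.
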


\begin{obs}
    For varying $s$ in $\mathbb{Z}/m\mathbb{Z}$, we noticed that most $E(43, s)$ output around the same value, namely 7.
\end{obs}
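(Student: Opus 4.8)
The plan is to bracket $E(43,s)$ from below by a base-independent counting bound and from above by an explicit short-representation construction, and then to read off the distribution over the $42$ admissible bases (the units of $\mathbb{Z}/43\mathbb{Z}$).

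For the lower bound, I would run the usual Catalan-type recursion on $c_k(s)$, the number of residues of $\mathbb{Z}/43\mathbb{Z}$ of complexity exactly $k$ in base $s$. A residue of complexity $k$ is $a \star b$ with $\star \in \{+,\cdot\}$ and $||a||_s + ||b||_s = k$, so $c_1(s)=1$ and $c_k(s) \le 2\sum_{i=1}^{k-1} c_i(s)\,c_{k-i}(s)$. Iterating gives an $s$-independent upper envelope for the cumulative count $N_k(s)=\sum_{i\le k}c_i(s)$, and this envelope stays below $43$ through $k=4$, so $E(43,s)\ge 5$ for every unit $s$. Pushing this to $6$ (and perhaps to $7$ for all but a handful of $s$) requires bookkeeping of the forced coincidences among small expressions --- e.g. $s\cdot(s+s)$, $(s+s)\cdot s$, and $s^2+s^2$ all give $2s^2$, and analogous identities collapse many complexity-$4$ and complexity-$5$ candidates --- which is where essentially all of the combinatorial work sits.

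For the upper bound, I would show $E(43,s)\le 7$ for ``most'' $s$ by exhibiting a short expression for each residue. The heuristic to exploit is that for a generic unit $s$ a tiny expression already lands on a residue that is ``hard'' as an ordinary integer: with $s=6$, for instance, $s^2\equiv -7\pmod{43}$, so a complexity-$2$ expression supplies an effective subtraction, after which a Horner-style ``base $s$'' layout finishes every residue within $7$ symbols. The point is that no such shortcut exists for $s=1$ (and only weak ones for $s=-1$ and $s=2$), which is exactly why those bases are the outliers flagged by Conjectures \ref{conj:inefficient1}, \ref{conj:4.3}, and \ref{conj:4.4}. Since this construction is not uniform in $s$, the clean way to finish is a bounded dynamic-programming computation of $||n||_s$ for all $n$ and all $42$ units, tabulating $E(43,s)$; the claim that most of these equal $7$ is then a finite fact, with the few exceptions listed explicitly.

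The main obstacle is the middle ground between these two halves. The purely combinatorial envelope on $c_k(s)$ already exceeds $43$ at $k=5$, so it cannot by itself certify $E(43,s)$ as large as $7$; meanwhile the upper-bound constructions genuinely depend on arithmetic peculiarities of the modulus $43$ and of each base. I therefore expect the honest proof to be: the recursion supplies the qualitative statement that all $E(43,s)$ lie in a narrow window (bounded growth of reachable sets), and a finite computation pins the modal value at $7$. A fully human-checkable argument would need a structural reason --- special to $43$ --- that collisions among complexity-$\le 6$ expressions leave strictly fewer than $43$ reachable residues for the typical base, and I do not see a slick route to that.
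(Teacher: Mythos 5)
The statement you are addressing is labeled an \emph{Observation} in the paper and is supported there only by direct computation (the expression-enumeration algorithm in the appendix and the plotted inefficiencies for $\mathbb{Z}/43\mathbb{Z}$), not by any structural argument. Your proposal --- which correctly recognizes that the Catalan-type counting envelope and base-specific constructions cannot by themselves pin the modal value at $7$, and therefore falls back on a finite tabulation of $E(43,s)$ over all $42$ units --- is essentially the same approach as the paper's.
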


\begin{obs}
    It seems like the $x$ such that $\mathbb{Z}/x\mathbb{Z}$ have a high proportion of bases with inefficiencies equal $n$, the least inefficiency, correspond to the lowest $x$-value in the set of all $x$ with an equal minimum inefficiency.
\end{obs}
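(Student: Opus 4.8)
This statement is an empirical observation rather than a theorem, so I describe how one would attempt to \emph{prove} it (or at least confine it), not merely record it. First I would fix notation: for a modulus $m$ write $n(m) = \min_{s \in U_m} E(m,s)$ for the least inefficiency, $\mathrm{Opt}(m) = \{s \in U_m : E(m,s) = n(m)\}$ for the set of optimal bases, and $P(m) = |\mathrm{Opt}(m)| / |U_m|$ for the proportion of bases achieving it. Partition the moduli into levels $A_n = \{m : n(m) = n\}$. In this language the observation asserts that, for each $n$ in the range of $n(\cdot)$, the restriction of $P$ to $A_n$ attains its maximum at $\min A_n$. Existence of $\min A_n$ is immediate by well-ordering; the content is the claim about \emph{where} $P$ peaks.

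The main reduction is to the reach sets $R_c(m,s) = \{\, r \in \mathbb{Z}/m\mathbb{Z} : r \text{ has complexity} \le c \text{ with base } s \,\}$, since $E(m,s) \le c$ if and only if $R_c(m,s) = \mathbb{Z}/m\mathbb{Z}$. The key structural input is a bound $|R_c(m,s)| \le B(c)$ that is independent of both $m$ and $s$: every base-$s$ representation using $c$ copies of $x$ is the image of an expression, hence of a polynomial with natural coefficients, no constant term, and complexity $\le c$, so one may take $B(c) = \sum_{j \le c} |K_j|$ with $K_j$ the polynomial complexity classes of Section~\ref{Polynomial Rings} (a cruder Catalan-type count of expressions also works). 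Consequently $E(m,s) \le c$ forces $m \le B(c)$, so $n(m) \ge B^{-1}(m)$ grows with $m$ and each level set $A_n$ is finite and contained in the window $(B(n-1),\, B(n)]$; in particular $\min A_n$ sits just above the threshold $B(n-1)$ at which inefficiency $n$ first becomes unavoidable.

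With this picture the heuristic is that at $m = \min A_n$ there is a lot of ``slack'': the reach of an optimal base at complexity $n$ can overshoot $m$ by nearly $B(n) - m$, so many bases $s$ (not just the extremal ones) already cover all of $\mathbb{Z}/m\mathbb{Z}$ by step $n$, making $P(m)$ large; as $m$ increases through $A_n$ this slack shrinks and $P(m)$ drops. To turn this into a proof one would want either (i) a monotonicity statement that the averaged reach $\sum_{s \in U_m} |R_c(m,s)|$ grows in $c$ at a controlled rate while the fraction of $s$ with full reach is decreasing in $m$ across the window, or (ii) a probabilistic model in which each additional unit of complexity multiplies reach by a roughly constant factor, so that $P(m) \approx \Pr[\text{reach} = m \text{ at step } n]$ is decreasing in $m$ on $(B(n-1), B(n)]$, together with a concentration estimate to pin the extremum at the left endpoint.

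The hard part is that complexity in $\mathbb{Z}/m\mathbb{Z}$ is genuinely irregular: as the paper's own observations about ``dominant inefficiencies'' and the erratic behaviour of $E(m,s)$ indicate, $|R_c(m,s)|$ depends on the arithmetic of $U_m$, the order of $s$, and subtle additive structure, and is not a smooth function of either variable — so no clean monotonicity of type (i) or (ii) is available, and the ``slack'' argument cannot be made rigorous without essentially computing the reach sets. I would therefore proceed pragmatically: first verify the observation over a much larger range of $m$ and actively search for counterexamples (the hedged phrasing warrants this); then attempt only the weaker, averaged assertion that $\min A_n$ maximizes $P$ for ``most'' $n$ or in an asymptotic-density sense; and finally sharpen $|R_c(m,s)| \le B(c)$ using the Section~\ref{Polynomial Rings} class sizes enough to make the window $(B(n-1), B(n)]$ narrow, which at least confines the phenomenon even if it does not fully explain it.
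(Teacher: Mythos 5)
You should know at the outset that the paper offers no proof of this statement: it is recorded purely as an empirical observation, and the only justification given is the informal intuition following Figure \ref{fig:44} — as the modulus grows, the inefficiency of each fixed base tends to creep upward, so at the first modulus where inefficiency $n-1$ disappears a large number of bases have just climbed to exactly $n$, which is why the proportion of bases attaining the (new) minimum is high at the smallest modulus in each level. Judged against that, your proposal is a genuinely different and more structured treatment: you formalize the claim via $P(m)$ and the level sets $A_n$, and you import the reach sets $R_c(m,s)$ bounded by the cumulative polynomial class sizes of Section \ref{Polynomial Rings}, which correctly gives $m \le B(n(m))$ and hence finiteness of each $A_n$ — a small rigorous ingredient the paper does not have. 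Your ``slack'' heuristic at $\min A_n$ is compatible with, and somewhat sharper than, the paper's intuition, and your insistence on wider numerical stress-testing before attempting anything rigorous matches the hedged way the statement is phrased.

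One step as written is wrong, however: the containment $A_n \subseteq (B(n-1), B(n)]$ does not follow. The bound $\lvert R_c(m,s)\rvert \le B(c)$ caps reach only from above, so $E(m,s) \le c$ forces $m \le B(c)$ (the right endpoint), but nothing prevents a modulus $m \le B(n-1)$ from having minimum inefficiency $n$: collisions can make every base's reach at complexity $n-1$ fall far short of the combinatorial count, and in practice they do. The paper's own data gives a counterexample: $m = 44$ is described as the first modulus with no base of inefficiency $6$, so its minimum inefficiency is $7$, yet $B(6) = 1+2+4+9+19+45 = 80 \ge 44$ (Table \ref{tab:polycomplexities}), so $44 \notin (B(6), B(7)]$; the moduli $24$ and $30$ with minimum inefficiency $7$ violate it even more strongly. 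Consequently $\min A_n$ need not sit ``just above $B(n-1)$,'' and the quantitative version of your slack argument that leans on that window should be reformulated using an empirical or average-case estimate of actual reach rather than the worst-case count $B$; the qualitative claim that slack is largest at $\min A_n$ and shrinks across $A_n$ survives, but only as a heuristic, which is all the paper itself offers.
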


\begin{figure}[H]
    \centering
    \begin{minipage}{.33\textwidth}
        \centering
        \captionsetup{justification=centering,margin=0.5cm}
        \includegraphics[scale=.33]{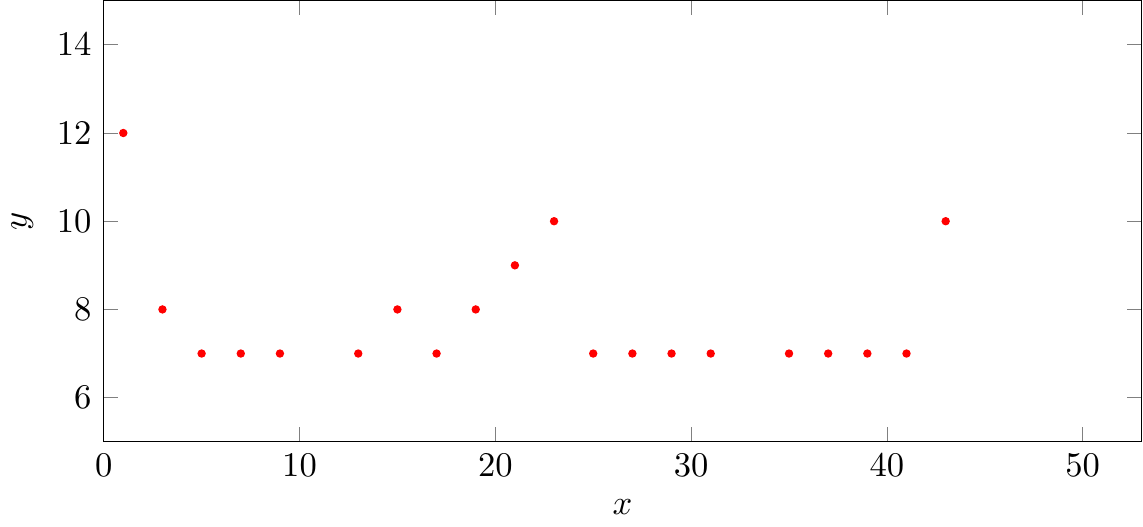}
        \caption{A plot of the inefficiencies ($y$) using a certain base ($x$) in $\mathbb{Z}_{44}$. In most cases, plotting $x$ and $y$ yields a plot similar to this one.}
        \label{fig:44}
    \end{minipage}%
    \begin{minipage}{.33\textwidth}
        \centering
        \captionsetup{justification=centering,margin=0.5cm}
        \includegraphics[scale=.33]{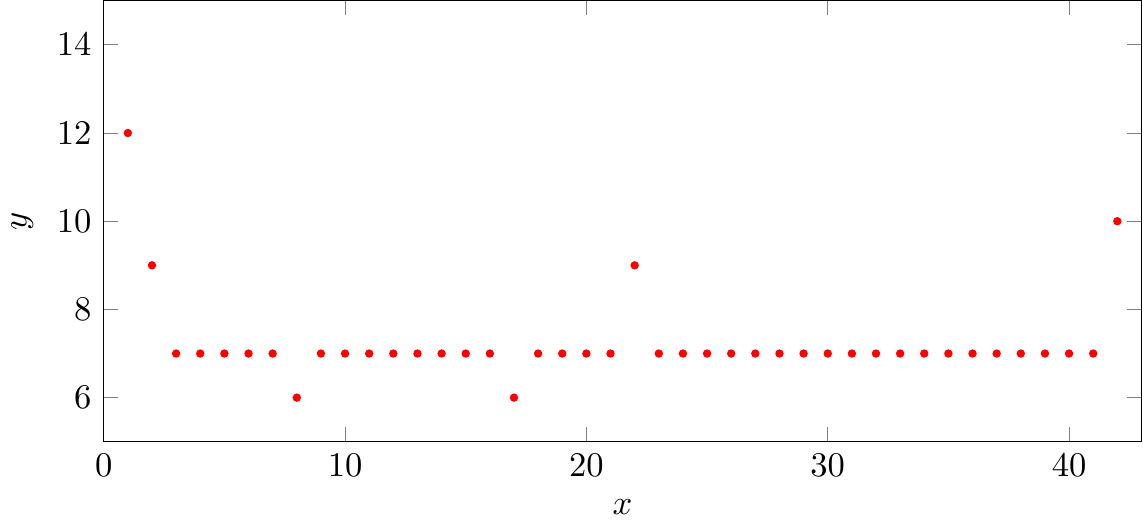}
        \caption{A plot of the inefficiencies ($y$) using a certain base ($x$) in $\mathbb{Z}_{43}$.\vspace{1.2 cm}}
        \label{fig:43}
    \end{minipage}
    \begin{minipage}{.33\textwidth}
        \centering
        \includegraphics[scale=.33]{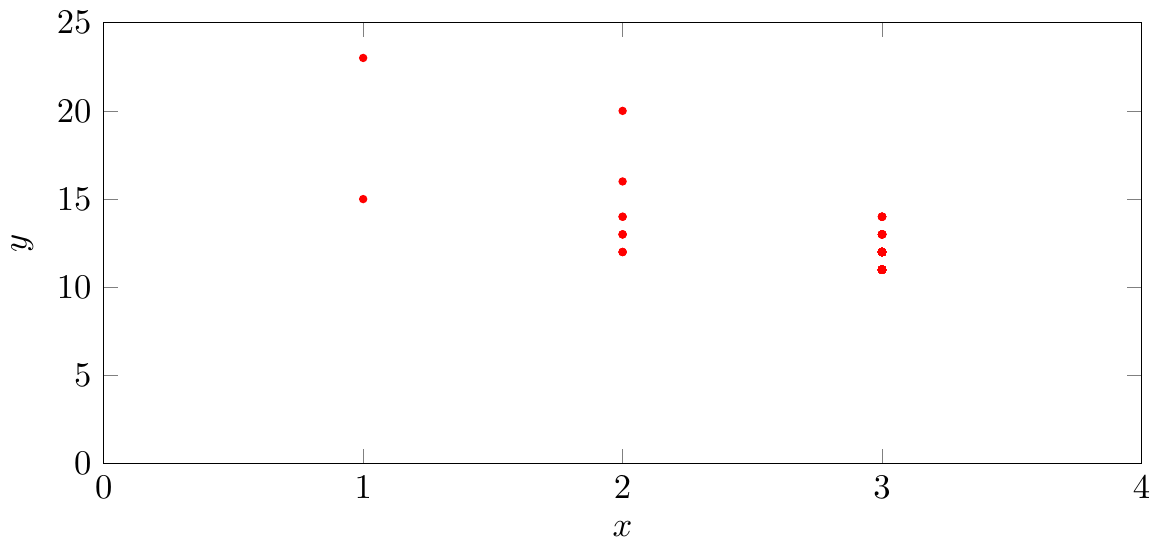}
        \captionsetup{justification=centering,margin=0.5cm}
        \caption{A plot of the resilience ($x$) vs. inefficiency ($y$) for $\mathbb{Z}/1009\mathbb{Z}$. \vspace{0.8cm}} 
        \label{fig:resilience}
    \end{minipage}
\end{figure}

For example, in Figure \ref{fig:44}, there are a lot of bases with an inefficiency of 7, and this corresponds to 44 being the first mod with no inefficiencies of 6. The intuition for this is that as the modulo increases, the inefficiencies of a given base tend to increase as there are more elements that need to be reached with combinations of that base. Thus at the point in which there are no more inefficiencies of $n-1$, there are a lot of other points which will have already increased up to $n$. Thus going up enough to `lose' the inefficiency of $n-1$ will result in more bases $s$ with inefficiency $n$.

We attempt to investigate the minimum inefficiency of a certain $\mathbb{Z}/m\mathbb{Z}$. 

\begin{figure}[H]
    \centering
    \includegraphics[width = .75\textwidth]{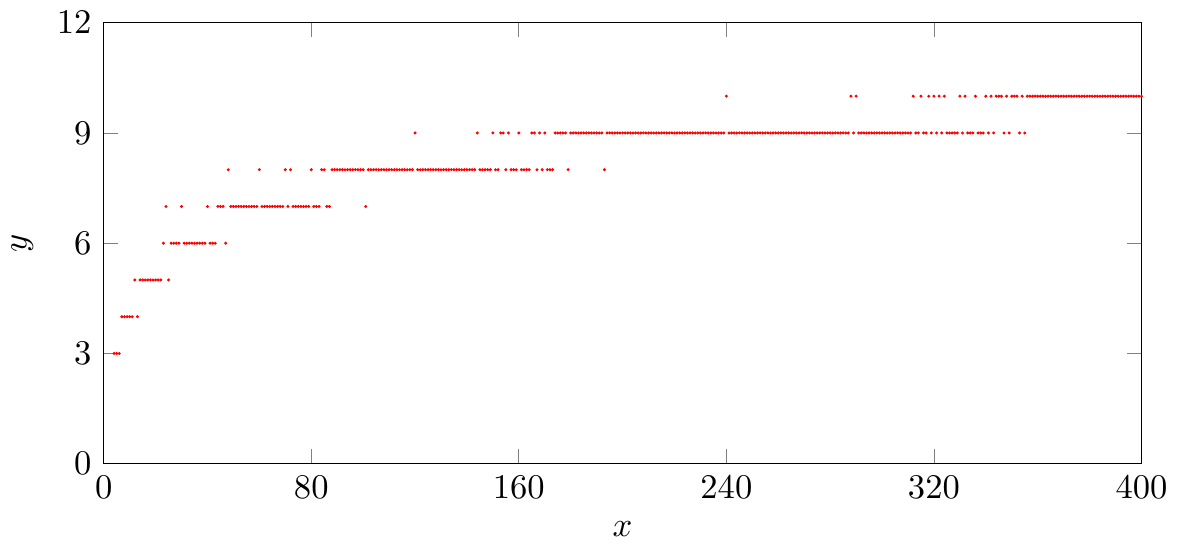}
    \caption{A plot of the minimum inefficiency ($y$) of $\mathbb{Z}/x\mathbb{Z}$.}
    \label{fig:mininefficiency}
\end{figure}

Let us consider the outliers that have a higher minimum inefficiency than others around it. Looking at Figure \ref{fig:mininefficiency}, we create the following data table:

\begin{center}
    \begin{tabular}{|c||c|c|c|c|c|c|c|c|c|c|c|c|}
        \hline 
        \multicolumn{13}{|c|}{$m$'s with low inefficiencies} \\
        \hline
        $m$ & 12 & 24 & 30 & 48 & 60 & 70 & 72 & 120 & 144 & 240 & 288 & 290\\
        \hline
        Inefficiency & 5 & 7 & 7 & 8 & 8 & 8 & 8 & 9 & 9 & 10 & 10 & 10\\
        \hline
\end{tabular}
\end{center}

The $m$'s are most of the time a multiple of 12, and generally seem to have many factors. Then, $\mathbb{Z}/m\mathbb{Z}$ would have many zero divisors. This could potentially explain why these numbers have such great inefficiencies: the abundance of factors results in them having less units than usual, so the ring has less 'tries' at having a unit base $s$ with a lower inefficiency.

\begin{definition}
    The \textbf{resilience} of $s$, (denoted $r(s)$) is the least $k$ such that for distinct modified complexity expressions $f(x), g(x)$ with complexity less than or equal to $k, f(s) = g(s)$. 
\end{definition}
For example, in $\mathbb{Z}_{10}, r(3) = 3$ because $3\cdot 3 = 3 + 3 + 3$ and this is the first instance where two expressions are equal when evaluated at 3. \\

\begin{prop}
    If $k$ is the resilience of $s$, then $s \geq k$.
\end{prop}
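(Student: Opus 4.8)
The plan is to prove the equivalent statement $r(s)\le s$: since $k=r(s)$, showing $r(s)\le s$ is exactly showing $s\ge k$. I would do this by exhibiting, for every base $s\ge 2$, a single explicit pair of distinct modified complexity expressions, each of modified complexity at most $s$, that evaluate to the same element of $\mathbb{Z}/m\mathbb{Z}$ at $x=s$. By the definition of resilience this immediately forces $r(s)\le s$.

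First I would recall the relevant modified complexity expressions in Horner form. The expression $x^2=x\cdot x$ has shorthand list $\{1,-1\}$ and modified complexity $2$, while the expression $sx$ obtained from $x$ by adjoining $x$ repeatedly has shorthand list $\{1,1,\ldots,1\}$ (with $s$ ones) and modified complexity $s$; both lie in the set $S$ of modified complexity expressions. For $s\ge 2$ these shorthand lists differ (compare $\{1,1\}$ with $\{1,-1\}$ when $s=2$; for $s\ge 3$ the lists even have different lengths), so the two expressions are genuinely distinct. Next I would evaluate at $x=s$: the expression $x^2$ gives $s^2$, and the expression $sx$ gives $s\cdot s=s^2$, hence the two agree as integers and therefore modulo $m$. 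Since $2\le s$ and $s\le s$, both expressions have modified complexity $\le s$, so there exist distinct modified complexity expressions of complexity $\le s$ agreeing at $s$; therefore $r(s)\le s$, i.e.\ $s\ge k$. (This is precisely the mechanism behind the paper's example $3\cdot 3=3+3+3$ in $\mathbb{Z}_{10}$, so the bound is tight at least for $s=3$.)

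There is no deep obstacle here; the proposition reduces to a one-line construction. The only points requiring a little care are: confirming that the exhibited pair really are distinct \emph{as modified complexity expressions} (cleanest via the uniqueness of shorthand lists), checking that ``complexity $\le k$'' in the definition of resilience refers to the modified complexity, i.e.\ the length of the shorthand list, and noting the degenerate case $s\le 1$, where $sx$ collapses to $x$ itself and the construction fails (indeed $r(1)=2$, so the claim should carry the standing assumption $s\ge 2$, consistent with the section's remark that $1$ is no longer the base). I would flag this edge case explicitly and otherwise present the argument as above.
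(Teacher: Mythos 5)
Your proof is correct and follows essentially the same route as the paper's: both identify the repeat $s\cdot s = s+s+\cdots+s$ ($s$ copies), an expression of modified complexity $s$ coinciding with $x^2$ at $x=s$, forcing $r(s)\le s$. Your version is just a more careful write-up (verifying distinctness via shorthand lists and flagging the degenerate case $s=1$, which the paper's one-line proof leaves implicit).
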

\begin{proof}
    We know that $s$ added to itself $s$ times, which has complexity $s$, is a repeat of $s^2$. Therefore, the latest possible occurrence of the first repeat has complexity $s$.

\end{proof}

\begin{obs}
    For all $m > 1$, in $\mathbb{Z}/m\mathbb{Z}, r(1) = r(2) = 2$. This occurs because the expressions $x$ and $x^2$ are equal whenever $x$ is 1, and $x^2 = 2x$ whenever $x$ is 2.
\end{obs}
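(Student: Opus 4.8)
The plan is to sandwich: prove a uniform lower bound $r(s) \ge 2$ valid for every base and every modulus, and then exhibit explicit collisions at complexity $2$ for the two specific bases $s = 1$ and $s = 2$. The lower bound is immediate from the lemma that $|K'_n| = 2^{n-1}$: since $|K'_1| = 1$, the expression $x$ is the \emph{only} modified complexity expression of complexity $1$ (and there are none of complexity $0$), so there cannot be two \emph{distinct} modified complexity expressions of complexity at most $1$. Hence the condition defining resilience is unsatisfiable at $k = 1$, and $r(s) \ge 2$ for all $s$ and all $m$; in particular $r(1), r(2) \ge 2$.

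For the matching upper bounds it suffices, for each of $s = 1$ and $s = 2$, to name two distinct modified complexity expressions of complexity at most $2$ that agree when evaluated at that base. For $s = 1$, take $x$ (shorthand list $\{1\}$, complexity $1$) and $x^2$ (shorthand list $\{1,-1\}$, complexity $2$): they are distinct by uniqueness of shorthand lists, and $1^2 = 1$ in $\mathbb{Z}/m\mathbb{Z}$, so $r(1) \le 2$. For $s = 2$, take $2x = x + x$ (list $\{1,1\}$) and $x^2 = x \cdot x$ (list $\{1,-1\}$), each of complexity exactly $2$: again distinct, and $2 \cdot 2 = 2^2 = 4$ in $\mathbb{Z}/m\mathbb{Z}$, so $r(2) \le 2$. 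Combining with the lower bound, $r(1) = r(2) = 2$.

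There is essentially no hard step. The only points needing care are that the displayed pairs are genuinely distinct as modified complexity expressions — immediate from uniqueness of shorthand lists, since the lists written down differ — and that the numerical identities $1 = 1^2$ and $4 = 2^2$ survive reduction modulo $m$, which they do for every $m$ since reduction is a ring homomorphism and the identities already hold over $\mathbb{Z}$. The hypothesis $m > 1$ is used only to keep the ambient ring nontrivial, and the argument in fact goes through verbatim when $m = 1$ as well.
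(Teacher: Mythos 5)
Your proof is correct and follows essentially the same route as the paper: the observation rests exactly on the collisions $1 = 1^2$ (expressions $x$ and $x\cdot x$) and $2+2 = 2\cdot 2$ (expressions $x+x$ and $x\cdot x$), valid in $\mathbb{Z}/m\mathbb{Z}$ for every $m$. The only thing you add is to make explicit the trivial lower bound $r(s)\ge 2$ (there is only one modified complexity expression of complexity at most $1$), which the paper leaves implicit.
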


In order to visually gain intuition about the importance of resilience, we can use graphs to represent modified complexity in $\mathbb{Z}/m\mathbb{Z}$:

\begin{definition} 
    A \textbf{modified complexity tree} with base $s$ (denoted by $\mct(s)$) is a binary tree with $s$ as the root. The left child of a node $t$ is $t+s$ and the right child is $t\cdot s$. For an example, see Figure \ref{fig:fftree}.
\end{definition}
\begin{figure} [H]
\centering
\begin{tikzpicture}[level distance=1cm,
  level 1/.style={sibling distance=4cm},
  level 2/.style={sibling distance=2cm},
  level 3/.style={sibling distance=1cm}]
  \node {3}
    child {node {6}
      child {node {2}
        child {node {5}}
        child {node {6}}
      }
      child {node {4}
        child {node {0}}
        child {node {5}}
      }
    }
    child {node {2}
        child {node {5}
            child {node {1}}
            child {node {1}}
        }
        child {node {6}
            child {node {2} }
            child {node {4} }
        }
    };
\end{tikzpicture}
        \captionsetup{justification=centering,margin=0.5cm}
\caption{Weak complexity tree for $\mathbb{Z}/7\mathbb{Z}$ and base $3$}
\label{fig:fftree}
\end{figure}
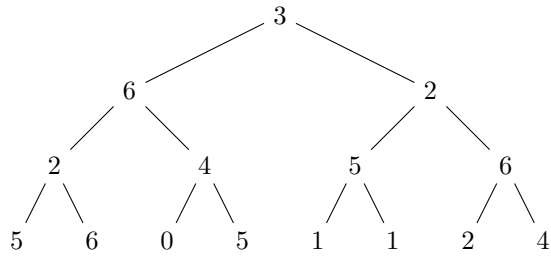

\begin{obs}
    It seems as though the more resilient a base is, the less inefficient it is. See \ref{fig:resilience}. This isn't always the case: for example, in many $\mathbb{Z}/x\mathbb{Z}$, there are elements with resilience 2 that are more inefficient than elements with resilience 3, but it generally seems to be ``likely'' that given two elements, the one with the higher resilience has the lower inefficiency.\\
    
    This follows from the intuition that one may get trying to make an MCT: having a repeat earlier gets rid of an entire potential branch of the tree, requiring potentially more levels of complexity to supplement for the loss.
\end{obs}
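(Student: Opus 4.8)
The plan is to separate the part of this observation that can be proved outright from the part that is genuinely heuristic, and to present the latter in the ``pruned tree'' language the statement already gestures at. This split matches the hedged wording (``seems,'' ``likely,'' ``not always the case''): a strict monotone inequality between resilience and inefficiency is false, but a clean one-sided bound together with a counting heuristic is exactly the right level of claim.

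For the rigorous half I would introduce the counting function $A_N(s) = \#\{a \in \mathbb{Z}/m\mathbb{Z} : ||a|| \le N \text{ with base } s\}$, so that by definition $E(m,s) = \min\{N : A_N(s) = m\}$. The key input is that there are exactly $\sum_{i=1}^N |K'_i| = 2^N - 1$ modified complexity expressions of complexity at most $N$ (using $|K'_i| = 2^{i-1}$), and that each such expression is in particular an expression of size $\le N$. By the definition of resilience, if $N \le r(s) - 1$ then no two of these collide when evaluated at $s$, so they realize $2^N - 1$ distinct elements of $\mathbb{Z}/m\mathbb{Z}$; hence $A_N(s) \ge 2^N - 1$ for every $N \le r(s)-1$. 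Two consequences follow. First, comparing with the trivial bound $A_N(s) \le m$ forces $2^{r(s)-1} - 1 \le m$, so resilience is $O(\log m)$ and a base whose resilience is near this maximum already reaches at least roughly half of $\mathbb{Z}/m\mathbb{Z}$ by level $r(s)-1$; this is the precise sense in which ``more resilient'' pushes toward ``less inefficient.'' Second, it isolates exactly where control is lost: past level $r(s)$ the tree $\mct(s)$ keeps branching, but with an effective branching factor below $2$, and the rate at which $A_N(s)$ then climbs to $m$ depends on how the later collisions are distributed, not on $r(s)$ alone.

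For the heuristic half I would formalize the ``pruned binary tree'' picture suggested in the remark: model $\mct(s)$ so that the number of new distinct values appearing at depth $N$ behaves like $2^N$ until depth $r(s)$ and then grows at a reduced rate determined by the collision density; summing, the least $N$ at which the running total reaches $m$ is a decreasing function of the depth at which pruning begins, i.e.\ of $r(s)$. To justify the ``not always'' caveat I would also record a direct small-$m$ search producing a resilience-$2$ base that is less inefficient than some resilience-$3$ base, which shows the statement cannot be promoted to a theorem and that Figure \ref{fig:resilience} is the appropriate form of evidence.

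The main obstacle is precisely the modeling step. In a finite ring collisions are strongly correlated — one collision collapses an entire subtree of $\mct(s)$, and the ``fraction pruned'' at each later level depends on $m$ and on the additive/multiplicative orbit of $s$, not merely on when the first collision occurred. Consequently there is no clean recursion for $A_N(s)$, the effective branching factor is not well defined, and no monotone inequality relating $r(s)$ to $E(m,s)$ can hold in general. The honest conclusion is therefore the one-sided bound $A_N(s) \ge 2^N - 1$ for $N < r(s)$ (hence small inefficiency when resilience is large) together with the pruned-tree heuristic — which is exactly why this is stated as an observation rather than a proposition.
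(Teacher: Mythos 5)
Your proposal is correct and matches the paper's own treatment: this is stated as an Observation precisely because only the pruned-MCT intuition and the empirical evidence of Figure \ref{fig:resilience} are offered, and your heuristic half reproduces that reasoning. Your rigorous half (at most $2^{N}-1$ expressions up to modified complexity $N$, all distinct below the resilience, giving $A_N(s)\ge 2^N-1$ for $N<r(s)$) is the same fact the paper itself invokes immediately afterwards when it uses a resilience-$r$ base to guarantee $2^{r-1}-1$ distinct elements of complexity below $r$, so you have not diverged from the paper's approach in any essential way.
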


\begin{definition}
    A \textbf{modified complexity graph} with base $s$, (denoted by $\mcg(s)$), is a mct(s) without repeated nodes. For an example, see Figure \ref{fig:ffgraph}. 
\end{definition}

\begin{figure}[H]
    \centering
    \begin{tikzpicture}[main/.style = {draw,circle}, node distance =1.5 cm]
    \node[main] (3) {\small 3}; 
    \node[main] (4) [below of=3] {\small 4};
    \node[main] (0) [below right of=3] {\small 0}; 
    \node[main] (6) [below left of=3] {\small 6};
    \node[main] (5) [below of=4] {\small 5}; 
    \node[main] (2) [left of=6] {\small 2};
    \node[main] (1) [right of=0] {\small 1};
    \draw[->] (0) -- (3);
    \draw[->] (0) edge [loop right] (0);
    \draw[->] (1) edge [bend right = 50] (3);
    \draw[->] (3) -- (6);
    \draw[->] (6) -- (4);
    \draw[->] (1) edge [bend left = 10] (4);
    \draw[->] (4) -- (0);
    \draw[->] (6) -- (2);
    \draw[->] (4) -- (5);
    \draw[->] (2) -- (5);
    \draw[<->] (2) -- (6);
    \draw[->] (3) edge [bend right= 50] (2);
    \draw[->] (5) -- (1);
    \draw[->] (5) edge [bend right=20] (1);
    
    \end{tikzpicture} 
        \captionsetup{justification=centering,margin=0.5cm}
    \caption{Connected weak complexity graph with base 3 in $\mathbb{Z}/7\mathbb{Z}$.}
    \label{fig:ffgraph}
    \end{figure}
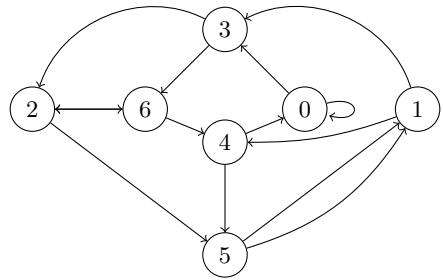

\begin{remark}
    Note that every node of this connected graph has two arrows leading out and two arrows leading in, since every node has an additive and multiplicative inverse.
\end{remark}

\begin{figure}[H]
    \centering
    \includegraphics[width=.8\textwidth]{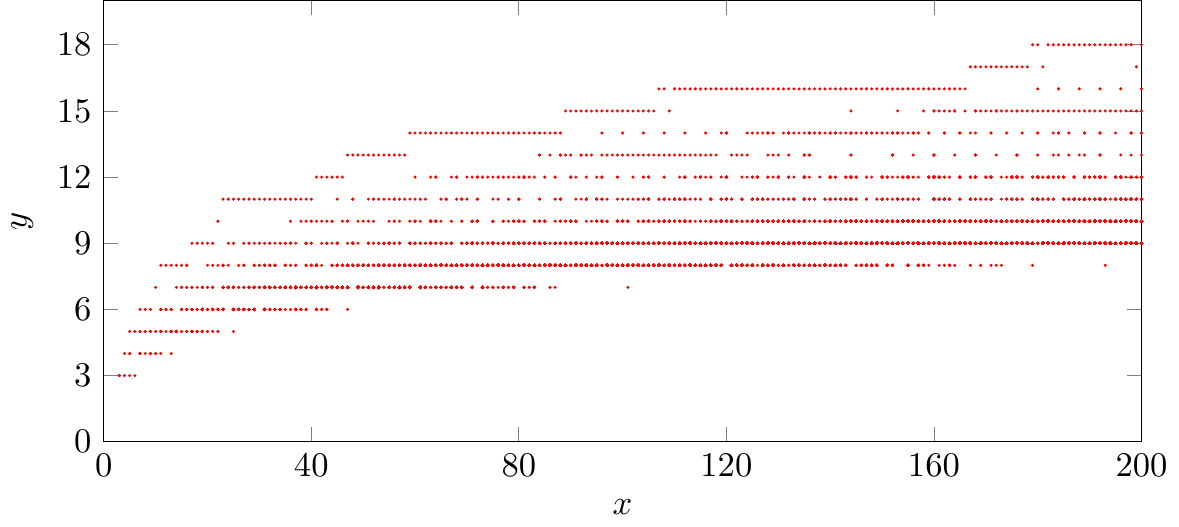}
        \captionsetup{justification=centering,margin=0.5cm}
    \caption{A plot of all of the inefficiencies ($y$) in $\mathbb{Z}/x\mathbb{Z}$.}
    \label{fig:AllInefficiencies}
\end{figure}

From here, we return to the question of what values of $s$ are most and least efficient. Observing Figure \ref{fig:AllInefficiencies}, we notice a distinct group of outliers that have higher inefficiencies than the rest. Through observation, we know that these have base $s=1$. At one point, we conjectured that perhaps this is a reflection of integer complexity in the naturals. To be precise, we once conjectured that for all $a \in \mathbb{Z}/m\mathbb{Z}$
\[
E(m,1) = \max(||a||). 
\]
Note that $||a||$ refers to integer complexity in the naturals. 

This would imply that inefficiency for $s = 1$ increments when $m$ is the least element of a complexity class. This is the explanation for the presence of mostly continuous line segments making up the inefficiencies for $s = 1$. 

However, there are counterexamples for this. One such counterexample is $107 \in \mathbb{Z}_{109}$. We computed that $||107||_1 = 15$, since $107 \equiv 216 \equiv 2^3\cdot 3^3 \pmod{109}$ but $||107|| = 16$.

Other than a small number of counterexamples, this conjecture generally seems to hold true. These counterexamples happen in $\mathbb{Z}/m\mathbb{Z}$ whenever for all naturals $a \leq m$, there is another integer $a'$ such that $a \equiv a' \mod m$ and $a'$ has an unusually low complexity.

In the case of $m = 109$, there were two factors that contributed to this being a counterexample. Firstly, 109 is barely larger than 107, the first element with complexity 16, so 107 was the only natural number up to 109 with a complexity of 16. Secondly, $107 \equiv 216 \mod 109$, and since 216 is ``very composite'' ($216 = 2^3\cdot3^3$), it happens to have a lower-than-ordinary complexity.\\

\begin{theorem} \label{thm:modupper}
For a base $s>1$, 
\[
   E(m,s) \leq \frac{s}{\ln s} \ln (m) + s - 1
\]
\end{theorem}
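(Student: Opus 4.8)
The plan is to bound the complexity of an arbitrary element $n \in \mathbb{Z}/m\mathbb{Z}$ by representing $n$ in base $s$ via Horner's method, exactly as in the proof of Lemma \ref{lem:deg} and Proposition \ref{prop:degreeHorner}, but now working modulo $m$. First I would pick a representative of $n$ in $\{0, 1, \ldots, m-1\}$ and write it in base $s$ as $n \equiv a_0 + a_1 s + a_2 s^2 + \cdots + a_d s^d \pmod m$, where $0 \le a_i \le s-1$ and $d = \lfloor \log_s m \rfloor$, so that $d \le \log_s m = \ln m / \ln s$. Horner's method then expresses this as $(\cdots((a_d x + a_{d-1})x + a_{d-2})x + \cdots)x + a_0$ evaluated at $x = s$.

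Next I would count the $s$'s (i.e.\ the occurrences of the base) used in this Horner expression. There are $d$ multiplications by $x$, contributing $d$ symbols, plus the cost of building each coefficient $a_i$. Each $a_i$ with $0 \le a_i \le s-1$ can be written as a sum of $a_i$ copies of $x$, costing $a_i$ symbols (and a coefficient of $0$ costs nothing, being omitted). So the total is at most $d + \sum_{i=0}^d a_i$. The sum of the base-$s$ digits of $n$ is at most... here is where care is needed: naively $\sum a_i \le (s-1)(d+1)$, which is too weak. Instead I would argue more carefully: the leading digit $a_d$ is at most $s-1$, and we want the bound $s - 1$ to appear only once. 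The cleanest route is to observe that $\sum_{i} a_i s^i = n \le m$, and bound the digit sum against $\log$; but actually the stated bound $\frac{s}{\ln s}\ln m + s - 1$ strongly suggests the intended accounting is: $d \le \log_s m$ multiplications, and the sum of digits is handled so that it contributes roughly $(s-1) \cdot (\text{number of nonzero digit-groups})$ folded into the $\frac{s}{\ln s}\ln m$ term, with a single leftover $s-1$.

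So the key step I would carry out is the following repackaging: write $n$ in base $s$ with $d+1$ digits. The Horner cost is $d + \sum_{i=0}^d a_i \le d + (s-1) + \sum_{i=0}^{d-1} a_i$. Now for each of the $d$ lower digits, the cost of one multiplication plus that digit is at most $1 + (s-1) = s$; hence $d + \sum_{i=0}^{d-1} a_i \le s\cdot d \le s \log_s m = \frac{s}{\ln s}\ln m$. Adding the leftover leading-digit cost $s-1$ gives $E(m,s) \le \frac{s}{\ln s}\ln m + s - 1$, which is exactly the claim. I would also double-check the edge cases: $n = 0$ costs $1$ (just $x$, since $s$ is a nonzero element, or more carefully one needs $x \cdot 0$-type expressions — but $0$ will be reachable cheaply and certainly within the bound), and small $n < s$ cost at most $n \le s - 1 < \frac{s}{\ln s}\ln m + s - 1$.

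The main obstacle is the digit-sum bookkeeping: the bound only works if each multiplication step is "paid for" together with the digit it precedes, so that the worst case per step is $s$ rather than $s-1$ being multiplied by the full digit count $d+1$. I would need to be careful that the number of multiplications is genuinely $d \le \log_s m$ and not $d+1$, and that the leading term $a_d$ (which has no multiplication "before" it in the pairing) is the one contributing the bare $+(s-1)$. A secondary subtlety is whether $\lfloor \log_s m \rfloor$ versus the real number $\log_s m$ matters; since $d \le \log_s m$ as a real inequality, replacing $d$ by $\log_s m = \ln m/\ln s$ only weakens the bound, so this is harmless. A final check: the representation must actually be a valid expression with base $s$ in the sense of the paper (an expression in $x$'s with no constant term issues) — Horner's method produces exactly such an expression provided $n \not\equiv 0$, and the $n \equiv 0$ case is handled separately, so this is fine.
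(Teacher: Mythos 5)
There is a genuine gap, and it is precisely the difficulty the theorem is designed to get around. You write the representative $n$ in base $s$ and then claim that each digit $a_i$ ``can be written as a sum of $a_i$ copies of $x$, costing $a_i$ symbols.'' But with $x=s>1$ a sum of $a_i$ copies of $x$ evaluates to $a_i s$, not $a_i$; a digit $0<a_i<s$ (in particular the constant digit $a_0$) is not obtainable this way at all, and its true complexity as a residue mod $m$ is unknown a priori. The same confusion shows up in your edge cases: ``small $n<s$ cost at most $n$'' and ``$n=0$ costs $1$ (just $x$)'' are both false, since $x$ evaluates to $s\neq 0$ and $n$ copies of $s$ give $ns\neq n$. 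In effect you are costing the digits as if the base were $1$. If one instead reads your construction literally --- replace each digit $a_i$ by $a_i$ copies of $x$ inside the Horner nesting --- the resulting expression evaluates to $s\cdot n$, not $n$, so your argument as written only bounds the complexity of the elements $sn \bmod m$, and you never close the loop back to an arbitrary element.

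The missing idea, which is the heart of the paper's proof, is the reduction via the unit $s$: since $\gcd(s,m)=1$, multiplication by $s$ permutes $\mathbb{Z}/m\mathbb{Z}$, so it suffices to bound the complexity of the elements $s,2s,\ldots,ms \pmod m$. For such an element one writes $n$ in base $s$ and multiplies the whole Horner expression through by $s$, so every ``digit'' becomes $a_i\cdot s$, which genuinely is a sum of $a_i\le s-1$ copies of the base. With that fix your bookkeeping (pairing each of the $\lfloor\log_s n\rfloor$ multiplications with one digit, leaving a single leftover $s-1$) is essentially the paper's count, $s\lfloor\log_s n\rfloor + s-1 \le \tfrac{s}{\ln s}\ln m + s - 1$, and taking $n=m$ handles the element $0$. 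So the arithmetic is fine; what is missing is the permutation step that makes the digit costs legitimate, without which the proof does not produce valid expressions for the residues it claims to cover.
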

\begin{proof}
Since $E(m,s)$ is the largest complexity of an element $n$ in $\mathbb{Z}$ with base $s$, it is sufficient to prove an upper bound for $||n||_s$. In a similar fashion to our previous proofs, we want to express $n$ in base $s$ using Horner's method. However, we run into a problem if $s$ does not divide $n$ because we would get a remainder with unknown complexity.

Multiplying all elements of $\mathbb{Z}/m\mathbb{Z}$ by $s$ is a permutation since we already established that $s$ and $m$ are coprime. Therefore, the question is reduced to: what is the maximum number of $s$'s needed to express an element of the set $\{s, 2s, \ldots, (m-1)s\}$? Since all of these elements are divisible by $s$ in the naturals, we can use Horner's method to express them.

Suppose the base $s$ expansion of $n$ is:
\[
     n = s_1 + s(s_2 + s(s_3 + s(\ldots 
\]
Then, multiplying $s$ to both sides, 
\[
    ns = s \cdot s_1 + s(s \cdot s_2 + s(s \cdot s_3 + s(\ldots 
\]

The base $s$ representation of $n$ for some integer $n$ has $\lfloor\log_s{n}\rfloor + 1$ digits, in the naturals. Each digit $s_i$ is at most $s - 1$, so worst case scenario, each $s\cdot s_i$ takes at most $s - 1$ $s$'s. There are also $\lfloor\log_s{n}\rfloor$ $s$'s being multiplied.

Thus, adding these together, we get an upper bound of $(s-1)\left(\lfloor\log_s{n}\rfloor + 1\right)+\lfloor\log_s{n}\rfloor$. This is less than $s\log_s{n} + s - 1 = \frac{s}{\ln s} \ln (n) + s - 1$.

This is maximized when $n$ is set to $m$ (we don't have a way to obtain 0 with Horner's algorithm).
\end{proof}

\begin{remark}\label{rem:Ebound}
Note that this bound has a linear component, and for large $s$, this bound becomes very loose. It also falsely implies that inefficiency is correlated to the size of the base. We want to find a way to reduce $s$ if possible. This following corollary provides us with a powerful tool to reduce the upper bound in certain cases. 
\end{remark}

\begin{corollary}
For a base $s>1$,
\[
   ||n||_s \leq ||a||_s\left(\frac{a}{\ln a} \ln (m) + a - 1\right)
\]
\end{corollary}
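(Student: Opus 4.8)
The plan is to combine the additive structure of base-$a$ expansions with Theorem~\ref{thm:modupper} applied to the (larger) base $a$, then pay a multiplicative price of $||a||_s$ to rewrite each occurrence of $a$ in that expansion as an $s$-expression. Concretely: the element $a \in \mathbb{Z}/m\mathbb{Z}$ has some $s$-expression of length $||a||_s$, so wherever a proof that bounds $||n||_a$ would write the symbol ``$a$'', we can instead substitute that $s$-expression and obtain a valid $s$-representation of the same element. The length simply scales: an $a$-representation using $N$ copies of $a$ becomes an $s$-representation using $N \cdot ||a||_s$ copies of $s$. Hence $||n||_s \le ||a||_s \cdot ||n||_a$ for every $n$, provided $a$ is itself a unit in $\mathbb{Z}/m\mathbb{Z}$ (which we need so that the base-$a$ Horner expansion makes sense; I would state this hypothesis explicitly, as it is implicit in the corollary).

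First I would record the substitution lemma: if $f(x)$ is an expression with $f(a) \equiv n \pmod m$, and $h(x)$ is an expression with $h(s) \equiv a \pmod m$, then $f(h(x))$ is an expression with $f(h(s)) \equiv n \pmod m$, and the number of $x$'s in $f(h(x))$ equals (number of $a$'s in $f$) $\times$ (number of $s$'s in $h$). This is immediate from the recursive definition of expression: replacing each leaf symbol of $f$ by a copy of the subexpression $h$ preserves the addition/multiplication tree structure and multiplies the leaf count. Taking $f$ optimal for $n$ in base $a$ and $h$ optimal for $a$ in base $s$ gives a representation of $n$ in base $s$ of size $||n||_a \cdot ||a||_s$, so $||n||_s \le ||a||_s \cdot ||n||_a$.

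Second, I would bound $||n||_a$ using Theorem~\ref{thm:modupper} with the base $a$ in place of $s$: since $a > 1$ is a unit mod $m$, that theorem gives $||n||_a \le E(m,a) \le \frac{a}{\ln a}\ln(m) + a - 1$ for all $n$. Substituting into the previous inequality yields
\[
||n||_s \le ||a||_s\left(\frac{a}{\ln a}\ln(m) + a - 1\right),
\]
which is exactly the claimed bound.

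The only real subtlety — and the step I would flag as the main obstacle — is ensuring the substitution is legitimate in the modular setting: $f$ and $h$ are genuine expressions (trees built from $x$, $+$, $\cdot$, parentheses) with the correct leaf symbol, and congruence mod $m$ is a ring congruence, so $f(h(s)) \equiv f(a) \equiv n$ follows purely formally. One should double-check that ``complexity of $n$ with base $a$'' in Theorem~\ref{thm:modupper} is the same notion as $||n||_a$ used here (it is — base $a$ just means evaluating expressions at $x = a$), and that $a$ being a unit is needed and assumed. Beyond that, the argument is entirely routine: it is the standard ``compose optimal expressions'' trick, and the $\frac{a}{\ln a}\ln m + a - 1$ factor is just Theorem~\ref{thm:modupper} verbatim with $a$ as the base.
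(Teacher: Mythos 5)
Your proposal is correct and is essentially the paper's own (very terse) argument: the paper's ``write $n$ in base $a$'' is exactly your composition of the base-$a$ Horner bound from Theorem \ref{thm:modupper} with the substitution of an optimal $s$-expression for $a$ into each leaf, giving $||n||_s \le ||a||_s\,||n||_a$. Your explicit remark that $a$ must be a unit modulo $m$ (so the base-$a$ argument spans $\mathbb{Z}/m\mathbb{Z}$) is a hypothesis the paper leaves implicit but is indeed needed.
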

\begin{proof}
    We can instead write the $n$ in base $a$; then this bound follows. 
\end{proof}

\begin{remark}
    We would want to choose an $a$ such that both $a$ and $||a||_s$ is small. Then, this would successfully reduce our bound. 
\end{remark}

\begin{corollary}
    $\frac{3}{\ln 3} \ln (m) + 2$ acts as an upper bound on the minimum inefficiency for any base $s$ in $\mathbb{Z}/m\mathbb{Z}$.
\end{corollary}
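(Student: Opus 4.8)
The plan is to exploit the elementary fact that the minimum inefficiency over all admissible bases is no larger than the inefficiency of any single base we pick, so it suffices to exhibit one good base. The first step is to decide which base to use: in the bound $E(m,s) \le \frac{s}{\ln s}\ln m + s - 1$ of Theorem \ref{thm:modupper}, the coefficient $\frac{s}{\ln s}$ is minimized over integers $s \ge 2$ at $s = 3$ (indeed $\frac{3}{\ln 3} < \frac{2}{\ln 2}$, and $\frac{s}{\ln s}$ is increasing for $s \ge 3$), so $s = 3$ is the natural candidate and is exactly what produces the claimed $\frac{3}{\ln 3}$.

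The second step handles the generic case. When $\gcd(3,m) = 1$, i.e. $3 \in U_m$, the base $3$ is admissible, and applying Theorem \ref{thm:modupper} with $s = 3$ gives immediately
\[
\min_{s \in U_m} E(m,s) \le E(m,3) \le \frac{3}{\ln 3}\ln m + 3 - 1 = \frac{3}{\ln 3}\ln m + 2 ,
\]
which is precisely the assertion. So in the generic case the corollary is essentially a one-line consequence of the theorem.

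The main obstacle is the remaining case $3 \mid m$, where $3$ is not a unit and base-$3$ expansions are unavailable. The fallback is the preceding corollary: choosing any unit $s$ and writing elements of $\mathbb{Z}/m\mathbb{Z}$ in "base $3$" costs a factor of $\|3\|_s$ per digit, giving $E(m,s) \le \|3\|_s\left(\frac{3}{\ln 3}\ln m + 2\right)$. The trouble is that $\|3\|_s \ge 2$ for every unit $s$ when $3 \mid m$, so this bound by itself is too weak by a constant factor; closing the gap requires either selecting an $m$-dependent unit whose small powers or small expressions reach a convenient residue so the overhead is absorbed, or disposing of the finitely many offending magnitudes of $m$ directly. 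I expect this sub-case to be the real content of a fully rigorous proof; for the bulk of moduli (those with $\gcd(3,m)=1$) the argument is immediate as above.
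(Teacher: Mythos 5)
Your main argument is exactly the paper's proof: the paper's justification for this corollary is the one-line substitution of $s=3$ into Theorem \ref{thm:modupper}, together with the observation that $s=3$ minimizes $\frac{s}{\ln s}$ over integers $s>1$. So in the case $\gcd(3,m)=1$ you and the paper coincide.

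The obstacle you raise for $3\mid m$ is genuine, and it is a gap in the paper's own proof rather than only in yours: Theorem \ref{thm:modupper} tacitly assumes $s$ is coprime to $m$ (its proof uses that multiplication by $s$ permutes $\mathbb{Z}/m\mathbb{Z}$), and by the paper's earlier observation a base $s$ only reaches every residue when $s\in U_m$, so $E(m,3)$ is not even defined when $3\mid m$. The paper simply plugs in $s=3$ without addressing this, so as stated the corollary really needs either the hypothesis $\gcd(3,m)=1$ or a separate argument for $3\mid m$; your fallback via the preceding corollary indeed does not close it, since for $3\mid m$ every unit $s$ has $||3||_s\ge 2$ (in fact any complexity-$2$ expression equal to $3$ would be $2s$ or $s^2$, forcing $3\mid s$), so the factor $||3||_s$ cannot be avoided that way. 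In short: your treatment is at least as complete as the paper's, and more honest about where the argument actually stops; the cleanest repair is to state the corollary for $m$ coprime to $3$, or to fall back on $s=2$ (giving $\frac{2}{\ln 2}\ln m + 1$) when $3\mid m$, which changes the constant.
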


\begin{proof}
    This can be seen by simply plugging in 3 for the value of $s$ in the bound described in Theorem \ref{thm:modupper}. 3 is the positive integral value that causes $\frac{s}{\ln s}$ to have its lowest value.
\end{proof}

\begin{remark}\label{rmk:boundgraph}
    This proof can also be seen geometrically. We can consider a mc-graph in which all the multiples of $s$ are arranged in a circle for a $\mathbb{Z}/p\mathbb{Z}, p$ a prime. For an example, see Figure \ref{fig:circle}. Then we can index each multiple $as$ as $a$. 
    
    Then the operations of modified complexity are directly analogous to the operations in the base representation (addition of $s$ is the addition of $1$ on the graph, shifting one node counterclockwise; multiplication by $s$ multiplies the index by $s$). 

    As a result, the index of our base $s$ is 1, and these operations make expressing any node of the graph in terms of $s$ equivalent to expressing their index in terms of 1 using Horner's rule.
\end{remark}

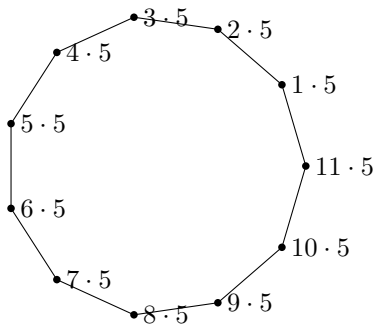
\begin{figure}[H]
    \centering
    \begin{tikzpicture}
    \equic[2cm]{11}
    \end{tikzpicture}
    \caption{A subgraph (excluding multiplication by $s$) of an mc-graph for $\mathbb{Z}/11\mathbb{Z}$ with $s = 5$}
    \label{fig:circle}
\end{figure}

\begin{obs}
    From trying some examples, it appears that every mc-graph has a cycle of length $r$ for all $r$, a natural up to $x$, the modulo.
\end{obs}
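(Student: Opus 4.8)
The plan is to produce, for each target length $r$, an explicit directed simple cycle on $r$ vertices inside $\mcg(s)$. Throughout I assume $s \in U_m$, the case in which $\mcg(s)$ is the $2$-in/$2$-out graph on all of $\mathbb{Z}/m\mathbb{Z}$ pictured after Figure~\ref{fig:ffgraph} (for a non-unit base the graph is not $2$-regular and the statement is not cleanly posed). As in Remark~\ref{rmk:boundgraph} --- and that identification goes through verbatim for any unit base, not only prime moduli --- I would first pass to the ``index'' coordinate, in which the vertices are again $\mathbb{Z}/m\mathbb{Z}$, the step $t \mapsto t + s$ becomes $a \mapsto a + 1$, and the step $t \mapsto ts$ becomes $a \mapsto sa$. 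Two lengths are then immediate: the pure-addition circuit $0 \to 1 \to \cdots \to (m-1) \to 0$ is a simple $m$-cycle, handling $r = m$, and the edge $0 \to s\cdot 0 = 0$ is a loop, handling $r = 1$.

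For $2 \le r \le m-1$ the first thing I would try is a cycle consisting of one long arc and one ``chord'': fix a starting index $a$, run $r-1$ consecutive $+1$-edges to reach $a + r - 1$, and close up with a single multiplication edge $a + r - 1 \mapsto s(a + r - 1)$. The vertices $a, a+1, \dots, a+r-1$ are automatically distinct (since $r - 1 < m$), so this is a genuine $r$-cycle exactly when $s(a + r - 1) = a$, i.e. when $a(1 - s) = s(r-1)$ has a solution $a$ in $\mathbb{Z}/m\mathbb{Z}$. As $s$ is a unit, this happens precisely when $d := \gcd(s - 1, m)$ divides $r - 1$. Hence when $d = 1$ --- in particular for every prime modulus $m$ and any admissible base $s \ne 0, 1$ --- this construction already produces a simple cycle of every length $1, 2, \dots, m$, and the observation is proved in that generality.

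The remaining, and genuinely harder, case is $d > 1$: a single chord only realizes the lengths $r \equiv 1 \pmod d$. The natural response is to build cycles from $k$ multiplication edges separated by arcs of $+1$-edges; unwinding the affine recursion $a \mapsto sa + (\text{arc length})$ along such a walk gives a closing condition of the form $u(1 - s^k) \equiv \sum_i s^{k-i} a_i \pmod m$ with the arc lengths $a_i \ge 0$ summing to $r - k$, and one must then pick $k$ and the $a_i$ so that this is solvable for $u$ \emph{and} the $r$ resulting vertices are pairwise distinct. I expect this combined arithmetic-and-distinctness bookkeeping to be the crux, and I do not believe it can always be arranged: for example one checks directly that $\mcg(3)$ in $\mathbb{Z}/8\mathbb{Z}$ has no simple cycle of length $4$ --- here $3^2 \equiv 1$, so any two consecutive multiplication edges backtrack, and the one- and two-chord possibilities of length $4$ all fail to close. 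So the clean statement really needs a hypothesis such as $\gcd(s-1,m) = 1$ (e.g. $m$ prime), under which the argument above is complete; for general $m$ the true statement is a characterization of which lengths are realizable by arc-and-chord cycles, and nailing that down is what would remain.
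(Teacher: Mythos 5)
The paper offers no proof of this statement at all: it is presented purely as an empirical observation (``from trying some examples''), so there is no argument of the authors' to compare yours against. Judged on its own terms, your proposal is sound and in fact sharpens the observation. The passage to index coordinates is legitimate for any unit base (the multiples of $s$ exhaust $\mathbb{Z}/m\mathbb{Z}$ exactly when $s \in U_m$, so the bijection $a \mapsto as$ of Remark \ref{rmk:boundgraph} carries $+s$ to $+1$ and $\cdot s$ to $a \mapsto sa$), the arc-and-chord closure condition $a(1-s) \equiv s(r-1) \pmod m$ is right, and the solvability criterion $\gcd(s-1,m) \mid r-1$ follows correctly because $s$ is a unit. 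Together with the loop at $0$ and the pure-addition $m$-cycle this does prove the observation whenever $\gcd(s-1,m)=1$, which covers every prime modulus with base $s \neq 1$ --- plausibly exactly the examples that generated the observation (e.g.\ $\mathbb{Z}/7\mathbb{Z}$, $s=3$ and $\mathbb{Z}/11\mathbb{Z}$, $s=5$ in the figures).

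Your counterexample also checks out: in $\mathbb{Z}/8\mathbb{Z}$ with $s=3$ one has $s^2 \equiv 1$, and running through the cases of $0,1,2,3,4$ multiplication edges in a length-$4$ closed walk (the closure conditions become $2u \equiv 5$, $3a+b \equiv 0$ with $a+b=2$, etc.) shows none closes on four distinct vertices, so $\mcg(3)$ has no simple directed $4$-cycle even though it does have simple cycles of lengths $1,2,3$ and $8$. So if ``cycle'' means simple directed cycle --- the natural reading of the figures --- the observation is false as stated and must be restricted (e.g.\ to $\gcd(s-1,m)=1$); the only way to rescue the unrestricted statement is to read ``cycle'' as closed walk, in which case it becomes nearly trivial via the loop at $0$. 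The one piece you flag as open, a clean characterization of realizable lengths when $d=\gcd(s-1,m)>1$, is genuinely not handled by your multi-chord bookkeeping, but since the paper claims nothing there either, that is a refinement of the paper's observation rather than a gap relative to it; it would be worth recording your dichotomy (proof for $d=1$, counterexample for $m=8$, $s=3$) in place of the bare observation.
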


We are interested in strengthening the upper bound on the lowest possible inefficiency for a given field (not a ring, as will be clarified below) $\mathbb{Z}/m\mathbb{Z}$ from our already established bound of $\frac{3}{\ln 3}\ln{m}+2.$ Although we were unsuccessful in this endeavor so far, we shall outline our general method for attempting this.

The general idea is to guarantee the existence of a certain number of distinct elements with complexity under a certain value (for an existing value of $s$).

We shall go about this by trying to find the base $s$ with the maximum ``resilience'' in terms of modified complexity. If this resilience is equal to $r$, then there are $2^{r-1}-1$ distinct elements with modified complexity under $r$. Since modified complexity is an overestimate of actual complexity, these $2^{r-1} - 1$ elements all have complexity under $r$.

We will ``determine'' what base $s$ to choose by means of elimination. For example, we know that if $s = 1$, then the polynomials $x$ and $x^2$ will repeat each other, so we won't let $s$ be 1. Similarly, $s = 2 \Rightarrow x^2 = 2x$, so $s \neq 2$. Generally, we shall pair and set equal polynomials $f(x), g(x).$ $f(x) - g(x)$ has $\deg(f)$ roots if, WLOG, we let $f$ have the higher degree (importantly, this is only true if there are no zero-divisors in $\mathbb{Z}/m\mathbb{Z}$, so we are only considering fields); both polynomials are divisible by $x$ since they are expressions, so we will find a maximum of $\deg(f) - 1$ new roots $s$ to be thrown out. This process will be repeated until we have thrown out more than $p$ roots.

Note that this strategy does end up under-counting $r$ severely since it does not account for repeated roots whatsoever. However, this does not impact the existence of those undercounted elements with complexity under $r$.

\begin{lemma}\label{prop:lowerresilience}
    The highest resilience in modified complexity among those of all bases in $\mathbb{Z}/p\mathbb{Z}$ is greater than or equal to the largest $k$ such that $\left(k-2\right)4^{k}-\left(k-4\right)2^{k} \le 2p$.
\end{lemma}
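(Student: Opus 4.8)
The plan is a union‑bound (exclusion) argument over the $p$ candidate bases in $\mathbb{Z}/p\mathbb{Z}$, exploiting that $\mathbb{Z}/p\mathbb{Z}$ is a \emph{field}, so a nonzero polynomial of degree $d$ has at most $d$ roots. A base $s$ fails to have resilience $\ge k$ exactly when some two \emph{distinct} modified complexity expressions of complexity at most $k-1$ agree at $s$, i.e.\ when $s$ is a root of $f-g$ for some such pair $f\ne g$. So the goal is to show that, for the claimed value of $k$, the set of ``forbidden'' bases — those that are a root of $f-g$ for at least one early pair — has size strictly less than $p$, which leaves a good base. (Every base has finite resilience by the earlier proposition bounding $r(s)$ by $s$, so a surviving base genuinely has a well‑defined resilience, necessarily $\ge k$.)

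\textbf{Key steps.} First, recall that the $i$‑th modified complexity class has exactly $2^{i-1}$ elements and that distinct modified complexity expressions represent distinct polynomials. Fix distinct $f,g$ with $|||f|||,|||g|||\le k-1$. By uniqueness of modified complexity expressions $f-g$ is a nonzero polynomial, and since $f,g$ both have zero constant term, $x\mid f-g$; hence over $\mathbb{Z}/p\mathbb{Z}$ the polynomial $f-g$ has at most $\deg(f-g)-1\le\max(\deg f,\deg g)-1$ \emph{nonzero} roots, and by Proposition \ref{prop:degreeHorner} (giving $\deg f\le |||f|||$) this is at most $k-2$. Next, sum over unordered pairs, grouping them by the larger complexity $j$: there are $\tfrac32\bigl(4^{j-1}-2^{j-1}\bigr)$ pairs whose larger complexity is $j$, each contributing at most $j-1$ forbidden nonzero bases, so the number of forbidden bases is at most $\tfrac32\sum_{i=1}^{k-2} i\,(4^i-2^i)$. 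Evaluating the geometric‑type sums $\sum i\,4^i$ and $\sum i\,2^i$ in closed form and simplifying collapses this to an expression of the shape $\tfrac{1}{24}\bigl((3k-7)4^k-\cdots\bigr)$, which is comfortably below $p$ whenever $(k-2)4^k-(k-4)2^k\le 2p$; this yields a surviving base of resilience $\ge k$, and taking the largest such $k$ proves the lemma.

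\textbf{Main obstacle.} The conceptual content is light; the work is entirely in the bookkeeping of the final sum. Three points require care: one must charge only $\deg-1$ to the higher‑degree member of each pair rather than to both (otherwise the constant degrades); one must divide out the factor of $x$ so that $s=0$ — a root of \emph{every} $f-g$ — is not what forces failure; and one must evaluate $\sum_{i} i\,4^i$ and $\sum_i i\,2^i$ and then algebraically bound the combination down to the clean form $(k-2)4^k-(k-4)2^k$ in the statement rather than a messier intermediate expression. As the statement notes, the bound ignores repeated roots entirely and is therefore very slack, so any estimate of the same asymptotic shape suffices; the only genuine subtlety is landing the arithmetic on exactly the stated inequality.
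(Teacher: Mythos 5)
Your proposal is correct and rests on the same elimination idea the paper uses (count, over pairs of distinct modified-complexity expressions, the roots of $f-g$ over the field $\mathbb{Z}/p\mathbb{Z}$ and show not every base can be killed), but your bookkeeping is genuinely different and in fact cleaner. The paper stratifies pairs by degree and complexity, charging $D_1-1$ roots per pair and counting expressions of complexity $k_1$ and degree $D_1$ by $\binom{k_1-1}{D_1-1}$; most of its proof is then spent collapsing the resulting quadruple sum to the bound $(2^k-1)\bigl((k-2)2^{k-1}+1\bigr)$. You instead group unordered pairs by the larger modified complexity $j$, use $|K'_j|=2^{j-1}$ directly to get $\tfrac32\bigl(4^{j-1}-2^{j-1}\bigr)$ pairs, and charge at most $j-1$ nonzero roots each (via $\deg f\le |||f|||$ from Proposition \ref{prop:degreeHorner} and divisibility by $x$), giving $\tfrac32\sum_{i=1}^{k-2} i\,(4^i-2^i)$, an arithmetic–geometric sum that evaluates easily and is dominated by the paper's final bound; so the same threshold $(k-2)4^k-(k-4)2^k\le 2p$ suffices, and you even get resilience $\ge k$ from pairs of complexity $\le k-1$ where the paper works with complexity $\le k$. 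Two small points to tighten in a write-up, both of which the paper also glosses over and both absorbed by the huge slack: you should require the forbidden count to be less than $p-1$ (not $p$), since $s=0$ is a root of every $f-g$ and so never survives, and you should note that $f-g$ does not vanish identically modulo $p$ (its coefficients are bounded in absolute value by $k-1<p$ under the hypothesis), which is what licenses the "at most $\deg$ roots" step.
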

\begin{proof}
    First, we begin by showing that this expression
    \begin{equation*}\label{eqn:maxresiliencesum}
        \sum_{D_1 = 1}^{k} (D_1 - 1)\sum_{D_2=1}^{D_1} \sum_{k_1 = D_1}^{k} \binom{k_1-1}{D_1 - 1} \sum_{k_2 = D_2}^{k} \binom{k_2 - 1}{D_2 - 1}
    \end{equation*}
    is an upper bound for the number of integers that the base can't be if the modified resilience of the base is greater than $k$. In other words, within $k$ complexities, with the strategy detailed above, we will have eliminated this many possibilities for the value of $s$. 

    Suppose that we are setting equal two polynomials $f$ and $g$ with complexity up to $k$ and eliminating all of the resulting roots (other than 0). Then let $D_1, D_2$ be the respective degrees of $f$ and $g$ and $k_1, k_2$ be their respective modified complexities. 
    
    WLOG, suppose $D_1 \geq D_2$. Then for each pair $f$ and $g$, we would need to eliminate $D_1$ roots for each pair.

    Next given specific values of $D_1, D_2, k_1, k_2$, there are $\binom{k_1-1}{D_1 - 1}\cdot \binom{k_2 - 1}{D_2 - 1}$ polynomials $f$ and $g$ with those degrees and modified complexities.

    The degree $D_1$ can range anywhere from 1 to $k$, whereas $D_2$ can be anything from 1 to $D_1$. The modified complexities $k_1$ and $k_2$ can go up to $k$ but must be greater than the degrees of the respective polynomials associated with them. Therefore, in total, the number of possible values of $s$ we are eliminating is:

    \[
        \sum_{D_1 = 1}^{k} \sum_{D_2=1}^{D_1} \sum_{k_1 = D_1}^{k}  \sum_{k_2 = D_2}^{k} \binom{k_1-1}{D_1 - 1}\cdot \binom{k_2 - 1}{D_2 - 1}\cdot (D_1 - 1),
    \]

    which can be rewritten as

    \[
        \sum_{D_1 = 1}^{k} (D_1 - 1)\sum_{D_2=1}^{D_1} \sum_{k_1 = D_1}^{k} \binom{k_1-1}{D_1 - 1} \sum_{k_2 = D_2}^{k} \binom{k_2 - 1}{D_2 - 1}.
    \]
    
    Then, we can show that this is less than or equal to $(2^k - 1)(2^{2k-1} - 2^k + 1)$ with some algebra:
    \begin{align*}
        \sum_{D_1 = 1}^{k}(D_1 - 1)\left( \sum_{D_2=1}^{D_1} \left(\sum_{k_1 = D_1}^{k} \binom{k_1-1}{D_1 - 1} \left(\sum_{k_2 = D_2}^{k} \binom{k_2 - 1}{D_2 - 1}\right)\right)\right)\\
        = \sum_{D_1 = 1}^{k}  (D_1 - 1)\left(\sum_{k_1 = D_1}^{k} \binom{k_1-1}{D_1 - 1} \left(\sum_{D_2=1}^{D_1}\left(\sum_{k_2 = D_2}^{k} \binom{k_2 - 1}{D_2 - 1}\right)\right)\right)
    \end{align*}
    Then let $k_1 - 1 = a, D_1-1 = b, k_2 - 1 = c, D_2 - 1 =d$ to get:
    \begin{equation}\label{eqn:commutedsum}
        \sum_{b = 0}^{k-1} b\left(\sum_{a = b}^{k-1} \binom{a}{b} \left(\sum_{d=0}^{b}\left(\sum_{c = d}^{k-1} \binom{c}{d}\right)\right)\right)
    \end{equation}
    Then note that 
    
    \[\sum_{d=0}^{b}\sum_{c = d}^{k-1} \binom{c}{d} =
    \begin{tabular}{P{0.55cm}P{0.55cm}P{0.55cm}P{0.55cm}P{0.55cm}P{0.55cm}P{0.55cm}P{0.55cm}}
        $\binom{0}{0}$ & + & $\binom{1}{0}$ & + & $\binom{2}{0}$ & $\cdots$ & + & $\binom{k-1}{0}$ \\
                      &+ &$\binom{1}{1}$ & + &$\binom{2}{1}$ & $\cdots$ & + & $\binom{k-1}{1}$ \\
                     &       &       & + &$\binom{2}{2}$ &  $\cdots$& + & $\binom{k-1}{2}$ \\
                     &     & &   &      &              & $\ddots$ &  $\vdots$      \\
                     &              &     &        & $\binom{b}{b}$ & $\cdots$ & + &  $\binom{k-1}{b}$      
    \end{tabular}
    \]
    
    This then is less than $0 + 2^1 + 2^2 + \cdots + 2^{k-1}$. Using the geometric sum formula, this equals $2^{k} - 1$. Thus Equation \ref{eqn:commutedsum} is $\le (2^{k} - 1)\sum_{b = 0}^{k-1} b\sum_{a = b}^{k-1} \binom{a}{b} $ (distributive law). 
    
    Then see that 
    
    \[\sum_{b = 0}^{k-1} b\sum_{a = b}^{k-1} \binom{a}{b} =
    \begin{tabular}{P{0.55cm}P{0.55cm}P{0.55cm}P{0.55cm}P{0.55cm}P{0.55cm}p{2cm}}
                $\binom{1}{1}$ & + &$1\binom{2}{1}$ & + & $\cdots$ & + & $1\binom{k-1}{1}$ \\
                             & + &$2\binom{2}{2}$ & + & $\cdots$ & + & $2\binom{k-1}{2}$ \\
                             &   & & &          & $\ddots$ &  $\vdots$      \\
                             &   & & &           & &  $(k-1)\binom{k-1}{k-1}$      
    \end{tabular}
    \]

    Consider an arbitrary $i$th column of this triangle, the sum of which evaluates to:

    \[
    C_i = \binom{i}{1} + 2\binom{i}{2} + \cdots + (i-1)\binom{i}{i-1} + i\binom{i}{i}
    \]
    Then:
    \begin{align*}
    2C_i = &i\binom{i}{i} + \left(\binom{i}{1} + (i-1)\binom{i}{i-1} \right) + \left(2\binom{i}{2} + (i-2)\binom{i}{i-2} \right) + \cdots \\
    &+ \left((i-1)\binom{i}{i-1} + \binom{i}{1}\right)+ i\binom{i}{i}\\
    = &i\binom{i}{0} + \left(\binom{i}{1} + (i-1)\binom{i}{1} \right)+ \left(2\binom{i}{2} + (i-2)\binom{i}{2} \right) + \cdots \\
    &+ \left((i-1)\binom{i}{i-1} + \binom{i}{i-1}\right)+ i\binom{i}{i}\\
    = &i\left( \binom{i}{0} + \binom{i}{1} + \cdots + \binom{i}{i} \right) \\
    = & i\cdot 2^i\\
    C_i = &i\cdot 2^{i-1}
    \end{align*}
    By adding all the columns together, we get:
    \begin{align*}
         \sum_{b = 0}^{k-1} b\sum_{a = b}^{k-1} \binom{a}{b} &= 1\cdot 2^0 + 2 \cdot 2^1 + \cdots (k-1)\cdot 2^{k-2}\\
         &= \sum_{i=1}^{k-1}\left(  \sum_{j=i-1}^{k-2} 2^j \right)\\
         &= \sum_{i=1}^{k-1}\left(  2^{k-1}-2^{i-1} \right)\\
         &= \left( (k-1)\cdot 2^{k-1} - \left( 2^{k-1} - 1 \right)\right)\\
         &= \left( (k-2)\cdot 2^{k-1} +1\right)
    \end{align*}
    by the geometric sequence sum formula.

    Therefore, plugging back in, we get an upper bound of $(2^k - 1)\left( (k-2)\cdot 2^{k-1} +1\right)$.
    
    Since we are trying to eliminate as many bases as possible until we run out, we need our value of $r$ to be the greatest value of $k$ such that 
    \[
    (2^k - 1)\left( (k-2)\cdot 2^{k-1} +1\right) \le p - 1.
    \]

    This can be simplified to state that we need our value of $r$ to be the greatest value of $k$ such that 
    \[
    (k-2)2^{2k-1} + 2^k - (k-2)2^{k-1} \le p.
    \] 
    This further simplifies to
    \[
    \left(k-2\right)4^{k}-\left(k-4\right)2^{k} \le 2p. 
    \]
\end{proof}

\begin{remark}
    It may be possible to improve this upper bound by noting that in the shorthand lists for two expressions, if the ending numbers are the same, after setting the polynomials equal to each other, undoing that last operation would reduce the equality to an equality of two simpler polynomials. For example, if we have already found the values of $x$ such that $2x^3 = x^2 + x$, we need not check the values for which $2x^3 + x = x^2 + 2x$ or $2x^4 = x^3 + x^2$ since these equations could be reduced to the former. This means that we could drastically cut down the number of starting values $s$ needing to be thrown out; however, doing so makes the counting more difficult, so we have not done it above.
\end{remark}

From here, we can further simplify (while also loosening) the bound to an exponential one: observing graphs of these functions reveals to us that for all integral values, we saw that $\left(k-2\right)4^{k}-\left(k-4\right)2^{k} < b^x$ for $b = (4^7-2^7)^\frac{1}{6}$ (note that these two functions are equal at $k = 6$, and choosing a smaller value of $b$ will cause the inequality not to hold). The following proof will arrive at this result algebraically.

\begin{lemma}
    The highest resilience in modified complexity among those of all bases in $\mathbb{Z}/p\mathbb{Z}$ is greater than or equal to $\lfloor\log_{b} (2p) \rfloor$, for $b = (4^7-2^7)^\frac{1}{6}$. 
\end{lemma}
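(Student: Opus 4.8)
The plan is to reduce this to Lemma~\ref{prop:lowerresilience}, which already guarantees that the highest modified-complexity resilience over all bases in $\mathbb{Z}/p\mathbb{Z}$ is at least the largest integer $k$ satisfying $(k-2)4^{k}-(k-4)2^{k}\le 2p$. Set $f(k)=(k-2)4^{k}-(k-4)2^{k}$ and $b=(4^{7}-2^{7})^{1/6}$, so that $b^{6}=16256=f(6)$. It then suffices to establish the elementary claim that $f(k)\le b^{k}$ for every integer $k\ge 1$: granting this, if $K=\lfloor\log_{b}(2p)\rfloor\ge 1$ then $b^{K}\le 2p$ (since $b>1$), hence $f(K)\le b^{K}\le 2p$, so $K$ lies among the integers over which Lemma~\ref{prop:lowerresilience} maximizes, giving highest resilience $\ge K=\lfloor\log_b(2p)\rfloor$. (When $K=0$, i.e. $p=2$, the stated bound is immediate since every base has resilience at least $2$.)

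To prove $f(k)\le b^{k}$ I would argue by induction with the finite base range $1\le k\le 7$. The values $f(1)=2$, $f(2)=8$, $f(3)=72$, $f(4)=512$, $f(5)=3040$, $f(6)=16256$, $f(7)=81536$ are compared directly against $b^{k}$; all comparisons are comfortable except $k=7$, where one must check $81536\le b^{7}=16256\,b$. Sixth-powering both sides and using the factorizations $16256=2^{7}\cdot 127$ and $81536=2^{7}\cdot 7^{2}\cdot 13$, this reduces to the integer inequality $637^{6}\le 16256\cdot 127^{6}$, which holds with room to spare. For the inductive step, assume $k\ge 7$ and $f(k)\le b^{k}$; a one-line computation yields
\[
5f(k)-f(k+1)=2^{k}\bigl((k-6)2^{k}-(3k-14)\bigr),
\]
and since $k-6\ge 1$ and $2^{k}>3k-14$ for all $k\ge 1$, the right-hand side is nonnegative. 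Hence $f(k+1)\le 5f(k)\le 5b^{k}<b\cdot b^{k}=b^{k+1}$, the last step using $5^{6}=15625<16256=b^{6}$, so $5<b$. This closes the induction.

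The only genuine subtlety is that the multiplicative factor $5$ used in the inductive step does not suffice at $k=6$ — there $f(7)/f(6)=81536/16256\approx 5.016>5$ — so $k=7$ must be treated as its own base case, and that comparison is fairly tight ($81536$ versus $b^{7}\approx 8.18\times 10^{4}$). Verifying it cleanly via the sixth-power reduction above is the main obstacle; the remaining ingredients, namely the arithmetic of the base cases and the bound $2^{k}>3k-14$, are entirely routine.
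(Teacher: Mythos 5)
Your proposal is correct and follows essentially the same route as the paper: reduce to Lemma \ref{prop:lowerresilience} and prove $f(k)=(k-2)4^{k}-(k-4)2^{k}\le b^{k}$ by induction with base cases $k\le 7$ and an inductive step valid from $k=7$ onward. The only difference is cosmetic — you bound the ratio via the identity $5f(k)-f(k+1)=2^{k}\bigl((k-6)2^{k}-(3k-14)\bigr)\ge 0$ together with $5^{6}<b^{6}$, whereas the paper bounds $f(k+1)/f(k)$ by the decreasing function $\tfrac{128(k-1)}{32(k-2)-1}<b$ — and your use of the non-strict inequality is in fact slightly more careful, since $f(6)=b^{6}$ exactly.
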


\begin{proof}
    Let $f(k) = \left(k-2\right)4^{k}-\left(k-4\right)2^{k}$. We shall start by showing that $f(k) < b^k$ for all natural values of $k$. This will be proved by induction. As a base case, this can be shown to hold true computationally for 1, 2, 3, 4, 5, 6, and 7.

    Next, consider the function: 
    \[
    \frac{f(k+1)}{f(k)} = \frac{(k-1)4^{k+1}-(k-3)2^{k+1}}{(k-2)4^{k}-(k-4)2^{k}} = \frac{4(k-1)4^{k}-2(k-3)2^{k}}{(k-2)4^{k}-(k-4)2^{k}} = \frac{4(k-1)-\frac{2(k-3)}{2^{k}}}{(k-2)-\frac{(k-4)}{2^{k}}}.
    \]

    For $k \geq 3$, this is less or equal to $\frac{4(k-1)}{(k-2)-\frac{(k-4)}{2^{k}}}$. Furthermore, for $k \geq 6$, the denominator is greater than or equal to $(k-2)-\frac{2}{2^{6}}$, so the function will be less than $\frac{4(k-1)}{(k-2)-\frac{1}{32}}$.

    Thus, since $7 > 6> 3$, for $k \geq 7, \frac{f(k+1)}{f(k)} < \frac{128(k-1)}{32(k-2)-1} $. Furthermore, it can be seen by computation that $\frac{128(k-1)}{32(k-2)-1} < b$ at $k=7$. Since $\frac{128(k-1)}{32(k-2)-1}$ is a hyperbola and can easily be seen to be decreasing after 7, we can also conclude that $\frac{128(k-1)}{32(k-2)-1} < b$ for all $k \geq 7$. Then, by the transitive property, we get that $\frac{f(k+1)}{f(k)} < b$ for all $k \geq 7$.

    Now, assume that $f(k) < b^k$ for some natural number $k$ greater than 7. Since $\frac{f(k+1)}{f(k)} < b$, we know $f(k+1) < f(k)\cdot b$, and using our inductive assumption, we get that $f(k+1) < b^{k+1}.$

    This means that by the transitive property, $b^k\cdot \frac{1}{2} - 1$ is also an overestimate for the number of possible values of $s$ we eliminate within the first $k$ modified complexities. We can keep eliminating values of $s$ until we run out and have eliminated more than $p - 1$ values of $s$, so the maximum modified resilience is largest value of $k$ such that $b^k\cdot \frac{1}{2} - 1 \leq p - 1$. Therefore, simplifying, we need the largest value of $k$ such that $k \leq \log_b (2p)$, which is just $\lfloor\log_{b} (2p) \rfloor + 1$.
\end{proof}

Essentially, what we have stated is that there exists a value $s \in \mathbb{Z}/p\mathbb{Z}$ such that all polynomial expressions of the first $\lfloor\log_{b} (2p) \rfloor$ modified complexities are distinct when evaluated at $s$. This means that for each complexity $i < \lfloor\log_{b} (2p) \rfloor + 1$, we have $2^{i-1}$ elements with that modified complexity (whose real complexities could be even less). This gives us $2^{\lfloor\log_{b} (2p) \rfloor} - 1$ elements in total, which turns out to be a root expression of $p$ between $p^\frac{1}{2}$ and $p^\frac{1}{3}$. We have not succeeded in doing this yet, but the task of finding a lower bound may be accomplished by finding a way to optimally combine all of these elements, for which an upper bound on complexity is known, to generate any element.\\

\section{Directions for Future Work}
In the course of this paper, we have opened new cans of worms regarding integer complexity. Let us close our paper with the following remarks and questions.  
\subsection{Cyclotomic Fields}
Working in cyclotomic fields has yielded us some surprising results. We still have many unresolved questions. 
\begin{itemize}
    \item Our conjectures: \ref{conj:onebestwecando}, \ref{conj:whenisoneone}, \ref{conj:minimality}, \ref{conj:bestTheoreticalUpper}, and \ref{conj:powerliftingBound} 
    \item How tight are our bounds for larger $k$ values? (Due to computational restrictions, we have not been able to compute significant amounts of data.)
    \item Is there a bound on cyclotomic complexity that is tighter than ours for sufficiently large $n$? 
    \item The set of minimal numbers $\mathbb{M}$ is a promising set and more investigation should be done. 
    \item What are the relationships between the different $k$ values as its base? Some interesting things to explore include looking at the size of the complexity classes and computatability of complexities for ``nicer'' $k$, such as $k=2,4,8,$ etc.
    \item (These problems were given to us by Zelinsky as guiding problems): Do any of the following exist, and if so what is the value?
\begin{align*}
    \lim_{k\rightarrow \infty} \frac{||n||_k}{k} \\
    \lim_{k\rightarrow \infty} \frac{||n||_{p^k}}{p^k}\\
    \lim_{k\rightarrow\infty}     \frac{||n||_{k!}}{k!}  
\end{align*}

    Note that for $n=1$, we have done the first two limits, by Theorem \ref{theorem:primepowers1comp} (first doesn't, second equals 1).
    \item We noticed that we can use minimal polynomials in cyclotomic complexity (see Subsection \ref{subsect:polynomialrings}). Perhaps we can apply more advanced properties of minimal polynomials to aid research in cyclotomic complexity. 
    \item The connection between Galois theory and integer complexity in cyclotomic rings is interesting in that it produced a proof that $||1||_p = p$ (Proposition \ref{prop:comp1}). But if there was a generalization of this proof to prime powers (the obvious approach since they have a generator and Theorem \ref{theorem:primepowers1comp} proves it) there should be analogous parallels in the proof of Theorem \ref{theorem:primepowers1comp} and the generalization, as was the case for Proposition \ref{prop:comp1} in Footnote \ref{footnote:galoisanalogues}. But the integers mod powers of $2$ don't have generators (except for 2,4), whereas a Galois theory proof for Theorem \ref{theorem:primepowers1comp} might suggest they have generators. Furthermore, Corollary \ref{corollary:1compPrimeBound} raises the possibility that $||1||_{2^a p^b}$ might equal $k$.
\end{itemize}

\subsection{Polynomial Rings} \label{subsect:polynomialrings}
\begin{itemize}
    \item Our conjectures: \ref{conj:2lowerbound}, \ref{conj:eUpperbound}
    \item We have applied complexity classes $K_k$ to $\mathbb{Z}/m\mathbb{Z}$. Can we apply this concepts elsewhere to prove other conjectures and better bounds? 
    \item Can we find better bounds with modified complexity? Can we use these results and generalize them to normal polynomial complexity? 
    \item What results can we transfer from the polynomial ring to the naturals and using a base in general? 
    \item Can we find results pertaining to our questions in finite polynomial rings?
    \item What other results can be obtained from analyzing the minimal polynomials and properties of every expression having coefficients in $\mathbb{N}_0$?
    \item Furthermore, what structure could be used if we expanded the polynomials to allow coefficients in Cyclotomic Rings? This is especially related to Conjecture \ref{conj:unitcomplexity}, but may also reveal insights in power lifting like in Conjecture \ref{conj:powerliftingBound}.
    \item It would also be interesting to consider the relation between optimal expression of a natural $n$, say $xf$, using the solutions of $xf =n$ as the base.
    \item It may be fruitful to search for/classify all functions $f, h$ and a sequence of polynomials $g_0, g_1, \ldots$ such that $g_{n+1} = f(g_n)$ and $||g_{n+1}|| = h(||g_n||)$.
    This may provide deeper insight as to why using powers of $3x$ in the Cyclotomic Rings was so effective, how to generalize this, and a deeper understanding of why it has to be true. Finding some may also provide a proof and measure for computational ``tricks'' and their effectiveness. For instance, multiplying by 10 has a ``trick'' of doing a bitshift. This is represented by $f(g) = gx$ and $h(x) = 1+x$ having this condition. Other tricks like multiplying by 11 are represented by $f(g) = xg + g$ (this doesn't satisfy this for lack of an $h$). Note that these tricks are base agnostic, like $x$ being any natural we want. Then if $f,h$ don't work, can we define an analogue to the deficit and bound the difference between the real complexity and the predicted complexity (from a $h$)? Thereby a measure for the effectiveness of computational ``tricks'' can be made.
    \item Are we able to generalize the more efficient algorithms in the current literature, such as in \cite{Cordwell2017Jun}, to the Cyclotomic Rings?
    \item If so, such an algorithm and an exact condition for Conjecture \ref{conj:unitcomplexity} (perhaps the condition is that $k = 2^a p$ and $2^{a-2} < p$ as suggestive by Corollary \ref{corollary:1compPrimeBound}) being true could provide a test for a natural being a prime power or 2 times a prime power.
    \item Considering the minimality conjecture \ref{conj:whenisoneone} in the light of the technique in Theorem \ref{theorem:primepowers1comp} may also lead to some progress.
\end{itemize}

\subsection{Integers Modulo \texorpdfstring{$m$}{m}}
\begin{itemize}
    \item Our conjectures: \ref{conj:4.1}, \ref{conj:inefficient1}, \ref{conj:4.3}, \ref{conj:4.4}
    \item Is there a way to determine the least inefficient $s$ in $\mathbb{Z}/m\mathbb{Z}$? 
    \item Can we generalize the techniques using in Theorem \ref{theorem:primepowers1comp} to be in finite fields in which a $k$-th root of unity exists?
    \item It may be possible to do further research through applying graph theory on a mc-graph to learn more about the structure of integer complexity in a finite field. Furthermore, this is a subset of the vehicle routing problem in which the directed edges are one-way equal length streets and we wish to find an optimal route between a home position and a customer. Analyzing further examples of mc-graphs with the complexity of a node replacing it may also reveal more insights.
    \item Another possible direction is to consider mc-graphs in the non-modified complexity. However, this loses the nice property of having every node having two edges going in and two edges going out of it.
    \item We could also try to approximate the growth of complexity classes with probabilities using something like the logistic growth model.
\end{itemize}

\section{Acknowledgements}
We would like to thank our mentor Joshua Zelinsky for acquainting us with the project and showing us resources that help with our project. We would also like to thank our counselor Aaryan Sukhadia for listening to and verifying our proofs and giving insightful comments. We thank John Sim and David Fried for running research labs, and also the Clay Mathematical Institute for cosponsoring this project. 

\printbibliography

\section{Appendix}

\subsection{Algorithm}

We created various programs to compute complexities of elements for us. The general approach can be described as follows:
\begin{enumerate}
    \item Let $x$ be associated with the empty ordered list $()$.
    \item Store all of the expressions that we have in a set
    \item Create a new expression by adding or multiplying two other expressions and check that this expression is not already in our set of expressions. Let us define the ordered list associated with this new expression as follows: \begin{enumerate}
        \item Concatenate (expression of the first ordered list, n, expression of the second ordered list)
        \item Let n be the length of this list. Note that $n$ is one less than the complexity since the length of the list is one less than the complexity.
        \item If we are multiplying, then make n negative
    \end{enumerate}
    \item Add this new expression to our set of expressions
\end{enumerate}

Here is the algorithm in pseudocode.
\begin{lstlisting}[language = Python]
k = #maximum complexity we are looking for 
#we wish to associate an optimal expression with an ordered list. 
#The length of the ordered list is the complexity of the expression. 
expressions = {[x, ()]}
for i in range k:
    for a in range i:
        for all element1s in expressions with length a:
                for all element2s in expressions with length i-a:
                    #this one tests addition
                    newexpression = element1[0] + element2[0]
                    if newexpression not in expressions:
                        newlist = element1[1] + (length(element1[1]) +
                        length(element2[1]) + 1) + element2[1] 
                        #this addition is list concatenation
                        append [newexpression, newlist] to expressions
                    #this one tests multiplication
                    newexpression = element1[0] * element2[0]
                    if newexpression not in expressions:
                        newlist = element1[1] - (length(element1[1]) +
                        length(element2[1]) + 1) + element2[1] 
                        append [newexpression, newlist] to expressions
                    
\end{lstlisting}

For example, $1,2,-6,1,-2,1,9,1,-2$ correponds to $((x + x + x) \times ((x + x) \times (x+x)))((x + x) \times x)$.

Here is a link to the data and programs used in this paper: \url{https://gitlab.com/partridgetrai\\toruntangled/integer-complexity-generalizations-in-various-rings/-/tree/main}.
\end{document}